\definecolor{darkgreen}{rgb}{0,0.4,0.1}
\definecolor{darkpurple}{rgb}{0.7,0.0,0.4}
\numberwithin{equation}{section}
\newtheorem{thm}{Theorem}[section]
\newtheorem{lem}[thm]{Lemma}
\newtheorem{cor}[thm]{Corollary}
\newtheorem{prop}[thm]{Proposition}
\newtheorem{conj}[thm]{Conjecture}
\newtheorem{exmp}[thm]{Example}
\newtheorem{rem}[thm]{Remark}
\def\N{{\mathbb N}}
\def\Z{{\mathbb Z}}
\def\C{{\mathbb C}}
\def\ZZ{\mathbb{Z}}
\def\NN{\mathbb{N}}
\newcommand{\gra}{{\alpha}} \newcommand{\grb}{{\beta}} \newcommand{\grg}{{\gamma}} 
   \newcommand{\gru}{{\theta}}
\newcommand{\grf}{{\phi}}  \newcommand{\grc}{{\psi}}
  \newcommand{\grC}{{\Psi}} 
\providecommand{\udots}{\Udots}
\DeclareRobustCommand{\Udots}{%
	\vcenter{\offinterlineskip
		\halign{%
			\hbox to .9em{##}\cr
			\hfil.\cr\noalign{\kern.4ex}
			\hfil.\hfil\cr\noalign{\kern.4ex}
			.\hfil\cr}%
	}%
}
\title{The freeness and trace conjectures\\ for parabolic Hecke subalgebras}
\author{Eirini Chavli}
\address{Institut f\"ur Algebra und Zahlentheorie, Universit\"at Stuttgart, Pfaffenwaldring 57, 
70569 Stuttgart, Germany.}
\email{eirini.chavli@mathematik.uni-stuttgart.de}
\author{Maria Chlouveraki}
\address{Universit\'e Paris-Saclay, UVSQ, CNRS, Laboratoire de Math\'ematiques de Versailles, 
45 avenue des Etats-Unis,
78000 Versailles, France}
\email{maria.chlouveraki@uvsq.fr}
\subjclass[2010]{20C08}
\begin{document}
\thanks{The second author is supported by the \emph{Agence Nationale de la Recherche} through the JCJC project ANR-18-CE40-0001.}

\maketitle

\begin{abstract}
The two most fundamental conjectures on the structure of the generic Hecke algebra $\mathcal{H}(W)$ associated with a complex reflection group $W$ state that $\mathcal{H}(W)$ is a free module of rank $|W|$ over its ring of definition, and that $\mathcal{H}(W)$ admits a canonical symmetrising trace. The first conjecture has recently become a theorem, while the second conjecture, known to hold for real reflection groups,  has only been proved for some exceptional non-real complex reflection groups (all of rank $2$ but one).
The two most fundamental conjectures on the structure of the parabolic Hecke subalgebra $\mathcal{H}(W')$ associated with a parabolic subgroup $W'$ of $W$ state that $\mathcal{H}(W)$ is a free left and right $\mathcal{H}(W')$-module of rank $|W|/|W'|$, and that the canonical symmetrising trace of $\mathcal{H}(W')$ is the restriction of the  canonical symmetrising trace of $\mathcal{H}(W)$ to $\mathcal{H}(W')$. Until now, these two conjectures have only be known to be true for real reflection groups. We prove them for all complex reflection groups of rank $2$ for which the BMM symmetrising trace conjecture is known to hold. 
\end{abstract}

\section{Introduction}\label{Introduction}

Real reflection groups are finite groups of real matrices generated by reflections. They include the Weyl groups, which are fundamental in the classification and  study of other algebraic structures, such as finite reductive groups and complex Lie algebras. A finite group $W$ is a real reflection group if and only if it is a finite Coxeter group, that is, it has a presentation of the form
$$W=\langle s\in S\;|\; (st)^{m_{st}}=1
\text{ for all } s,t \in S\rangle$$
where $m_{st}\in \mathbb{Z}_{\geq2}$ for $s\not=t$ and $m_{ss}=1$.  
The subgroups of $W$ generated by subsets of $S$ are called (standard) parabolic subgroups. These are very special subgroups, which share many defining properties of the parent group $W$. They are  real reflection groups on their own right, and their cosets can be represented by canonical elements of $W$. They also allow us to study the parent group $W$ using induction arguments, including for the determination of its conjugacy classes and the calculation of its character table (see \cite[Chapters 2, 3 and 6]{gepf}).  Finally, Lusztig's families of characters \cite{26}, which play a key role in the representation theory of finite reductive groups, are defined through a truncated induction from parabolic subgroups (Lusztig's definition uses generic degrees and the theory of Iwahori--Hecke algebras; for a different approach see also \cite[Chapter 6]{gepf}).

Complex reflection groups  are finite groups of complex matrices generated by pseudo-reflections, that is, non-trivial elements that fix a hyperplane pointwise; they thus include and generalise real reflection groups. A complex reflection group is isomorphic to a product of irreducible complex reflection groups, which are classified as follows: they either belong to the infintie series $G(de,e,n)$, where $d,e,n \in \N^*$, or they are one of the $34$ exceptional groups $G_4,G_5,\ldots,G_{37}$. 
If $W \subset GL(V)$ is a complex reflection group, where $V$ is a finite-dimensional complex vector space, then 
the parabolic subgroups of $W$ are defined to be the pointwise stabilisers of the subspaces of $V$ --- in the real case, with this definition, the parabolic subgroups are all conjugates of standard parabolic subgroups. All parabolic subgroups of $W$ are also complex reflection groups. In particular, if $V$ is of dimension $2$ (the dimension of $V$ is also called the rank of $W$, if $W$ is irreducible), then all non-trivial proper parabolic subgroups of $W$ are cyclic groups.

Iwahori--Hecke algebras associated with Weyl groups appear as endomorphism algebras of induced representations in the study of finite reductive groups. They can also be defined independently as deformations of the group algebras of the associated Weyl groups, and this definition can be applied to all real reflection groups. Equivalently, they can be  seen as quotients of the group algebras of the corresponding braid groups. The ring of definition  of a generic Iwahori--Hecke algebra is a Laurent polynomial ring. Specialising the parameters of this ring accordingly, one can view the generic Hecke algebras of parabolic subgroups as subalgebras of the Iwahori--Hecke algebra of the parent group (this is not the case for any reflection subgroup). 
Similarly to the finite group case, we can use induction arguments to determine the representations of the latter from the representations of these parabolic Hecke subalgebras. 

Inspired by the idea that some complex reflection groups could play the role of Weyl groups of objects generalising finite reductive groups, the so-called ``Spetses'' (see \cite{BMM}), Brou\'e, Malle and Rouquier generalised the notion of braid groups and Hecke algebras to the case of complex reflection groups in \cite{BMR}. Since then, the study of these objects has grown to a subject on its own right, and Hecke algebras associated with complex reflection groups have turned out to be connected to many other mathematical structures, from other algebras, such as Cherednik algebras and quantum groups, to other theories, such as knot theory and mathematical physics. However, for everything to make sense and work as in the real case, two fundamental conjectures, which were standard facts in the real case, had to be stated. 
We will also state them here, using the notation $\mathcal{H}(W)$ for the generic Hecke algebra associated with a complex reflection group $W$ and $R(W)$ for the Laurent polynomial ring over which $\mathcal{H}(W)$ is defined.

$ $\\
\textbf{The BMR freeness conjecture} \cite{BMR} \emph{The algebra $\mathcal{H}(W)$ is a free $R(W)$-module of rank $|W|$.}\\

This conjecture has been a theorem for the past couple of years. It has been tackled with a case-by-case analysis, using combinatorial methods for the groups of the infinite series, and computational methods for the exceptional groups. For most of the exceptional groups of rank $2$, the conjecture has been proved by the first author in \cite{Ch18,Ch17}. 

$ $\\
\textbf{The BMM symmetrising trace conjecture} \cite{BMM} \emph{The  algebra $\mathcal{H}(W)$ admits a canonical symmetrising trace.}\\

The canonicity of the symmetrising trace  consists of the satisfaction of three conditions, which are given later in this article (see Conjecture \ref{BMM sym}) and we do not repeat here. Its existence gives rise to the definition of the Schur elements for $\mathcal{H}(W)$, which are certain Laurent polynomials that control the modular representation theory of $\mathcal{H}(W)$. They are also ubiquitous in the study of the families of characters for complex reflection groups, because they correspond to the inverses of the generic degrees, but also because the families of characters in the complex case have been defined as blocks of the Hecke algebra, following \cite{Rou} (see \cite{chlou} for the approach to this topic with the use of Schur elements). Contrary to the BMR freeness conjecture, the BMM symmetrising trace conjecture  remains unsolved. Our contributions towards its proof are the articles \cite{BCCK} and \cite{BCC}, where we prove the conjecture for the rank $2$ exceptional groups $G_4,G_5,G_6,G_7,G_8$ and $G_{13}$ respectively ($G_4$ was already known by \cite{MM10} and by  \cite{Mar46}), using a combination of computer algorithms.

Now, in order for everything to make sense and work as in the real case when it comes to induction from parabolic Hecke subalgebras, two fundamental conjectures, which were also standard facts in the real case, had to be stated. They are the parabolic counterparts of the first two conjectures:

$ $\\
\textbf{The parabolic freeness conjecture} \cite{malrou} \emph{Let $W'$ be a parabolic subgroup of $W$. The algebra $\mathcal{H}(W)$  is free as a left and right $\mathcal{H}(W')$-module of rank $|W|/|W'|$.}\\

One can notice that the BMR freeness conjecture is a particular case of the parabolic freeness conjecture, if $W'$ is taken to be the trivial group. Moreover, having a basis for $\mathcal{H}(W')$ as an $R(W')$-module and a basis for $\mathcal{H}(W)$ as an $\mathcal{H}(W')$-module allows the determination of a basis for $\mathcal{H}(W)$ as an $R(W)$-module. This approach has been used  sometimes by  people who have tackled the BMR freeness conjecture for the exceptional irreducible complex reflection groups,
 in order to simplify the calculations.

$ $\\
{\textbf{The parabolic trace conjecture} \cite{BMM} \emph{Let $W'$ be a parabolic subgroup of $W$. Let $\tau$ be the canonical symmetrising trace on $\mathcal{H}(W)$. The restriction $\tau|_{\mathcal{H}(W')}$ is the canonical symmetrising trace on $\mathcal{H}(W')$.}}\\

The second conjecture is in fact the last part of \cite[2.1, Assumption 2]{BMM}, whose first part is the classical BMM symmetrising trace conjecture. Both conjectures appeared later in \cite{malrou}, where Malle and Rouquier assumed their validity in order to study families of characters for complex reflection groups using Rouquier's definition and truncated induction from parabolic subgroups. Another application of the parabolic trace conjecture is that it allows the determination of the Schur elements of $\mathcal{H}(W')$ from those of $\mathcal{H}(W)$, or the other way round (see, for example,  \cite[Lemma 2.3.5]{chlou}).

Until now, the validity of both conjectures has been an open problem for any non-real complex reflection group.
In this paper we prove them for all  complex reflection groups of rank $2$ for which the BMM symmetrising trace conjecture is known to hold (we prove the parabolic freeness conjecture for a couple extra). The idea is the following: in \cite{BCCK} and \cite{BCC}, our method for proving
 the BMM symmetrising  trace conjecture for $G_n$, where $n \in \{4,5,6,7,8,13\}$, relied on the choice of a ``good'' basis $\mathcal{B}(W)$ for $\mathcal{H}(G_n)$ as an $R(G_n)$-module, 
 so that
\begin{itemize}
	\item[(i)] $1 \in \mathcal{B}(W)$; \smallbreak
	\item[(ii)] $\mathcal{B}(W)$ specialises to $W$  when $\mathcal{H}(W)$ specialises to the group algebra of $W$; \smallbreak
	\item[(iii)]  $\tau(b)=\delta_{1b}$ for all $b \in \mathcal{B}(W)$.
\end{itemize}	
We then created a \texttt{C++} program that expressed every product of a generator of $\mathcal{H}(W)$ belonging to $\mathcal{B}(W)$ with an element of $\mathcal{B}(W)$ as a linear combination of elements of $\mathcal{B}(W)$. This led us to the creation of an algorithm that expresses any element of  $\mathcal{H}(W)$ as a linear combination of elements of $\mathcal{B}(W)$, which we present here in Section \ref{gap3section}. We used the programming language GAP3 for the implementation of the algorithm, and the program that we created can be found on the project's webpage \cite{web}. Now, using this program, we can take a set of $|W|$ elements of $\mathcal{H}(W)$ and check whether it is a basis by calculating the supposed change of basis matrix: if the determinant of this matrix is a unit in $R(W)$, then the matrix is indeed a change of basis matrix and the set in question is a basis of  $\mathcal{H}(W)$ as an $R(W)$-module.

Having this program in our hands, we decided to solve the parabolic freeness conjecture for the groups  $G_n$ above. One of the perks when working with rank $2$ groups is that all parabolic subgroups are cyclic groups. As we will see later in this paper, it is enough to prove the conjecture for the subgroups generated by the generators of $W$. Therefore, if $s$ is a generator of $\mathcal{H}(W)$ and $W'$ is the corresponding cyclic parabolic subgroup of $W$, we need to find a subset $\mathfrak{B}^l_s(W)$ containing $|W|/|W'|$ elements of $\mathcal{H}(W)$ so that 
the set  $\mathfrak{P}_s^l(W):=\{s^jb\,|\, j=0,\ldots, |W'|-1,\,b \in \mathfrak{B}_s^l(W)\}$
is a basis of $\mathcal{H}(W)$ as an $R(W)$-module. 
Then $\mathfrak{B}_s^l(W)$ is a basis of $\mathcal{H}(W)$ as a left $\mathcal{H}(W')$-module.
We call the set  $\mathfrak{P}_s^l(W)$ a \emph{left parabolic basis of $\mathcal{H}(W)$ with respect to the generator $s$}. We can similarly define $\mathfrak{B}_s^r(W)$ and $\mathfrak{P}_s^r(W)$ when considering $\mathcal{H}(W)$ as a right $\mathcal{H}(W')$-module. 

In Section \ref{Finding parabolic bases}, we explain how we came up with parabolic bases for $G_4$, $G_7$, $G_8$ and $G_{13}$, using the Etingof--Rains surjection, as the first author did in \cite{Ch17} in order to find bases for the generic Hecke algebras of the groups $G_4,\ldots,G_{16}$. The idea is to choose a good expression for every element of $G_n$ and consider the corresponding element inside $\mathcal{H}(G_n)$. Not all choices we tried were good, but this is why having the GAP3 program was extremely useful. We could proceed through trial and error, especially in the case $G_{13}$, where we had sometimes many options for the same element. For $G_5$ and $G_6$, we show that the parabolic bases can be deduced from the ones for $G_7$. Finally, for some other rank $2$ groups, we explain why the bases existing already in literature prove the validity of the parabolic freeness conjecture. We conclude the following:

\begin{thm}
The parabolic freeness conjecture holds for the exceptional irreducible complex reflection groups $G_4$, $G_5$, $G_6$, $G_7$, $G_8$,  $G_{12}$, $G_{13}$, $G_{14}$, $G_{16}$ and $G_{22}$.
\end{thm}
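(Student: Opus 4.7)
The plan is to reduce the statement to a finite verification problem that can be handled by the GAP3 algorithm described in Section~\ref{gap3section}, and then to exhibit the required parabolic bases one group at a time.

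The first step is a reduction of the conjecture to the case where $W'$ is the cyclic group generated by one of the generators of $W$. Since every parabolic subgroup of a rank $2$ complex reflection group is either trivial, the whole group, or cyclic of order equal to that of some generator, and since conjugate parabolics give rise to conjugate (hence isomorphic) parabolic Hecke subalgebras with the same freeness behaviour, it suffices to check the conjecture when $W' = \langle s \rangle$ for $s$ a generator of $\mathcal{H}(W)$; the trivial case recovers the classical BMR freeness conjecture, which is already known for all groups in the list. Thus for each such $s$, I need to produce a subset $\mathfrak{B}_s^l$ of $\mathcal{H}(W)$ of cardinality $|W|/|W'|$ such that
\[
\mathcal{B}_s^l := \{\, s^j b \,\mid\, 0 \leq j \leq |W'|-1,\ b \in \mathfrak{B}_s^l \,\}
\]
is an $R(W)$-basis of $\mathcal{H}(W)$, and symmetrically on the right.

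The second step is the construction of the candidate sets $\mathfrak{B}_s^l$ and $\mathfrak{B}_s^r$. For the groups $G_4$, $G_7$, $G_8$ and $G_{13}$, I would follow the Etingof--Rains strategy used by the first author in \cite{Ch17}: pick a distinguished normal form for the representatives of the cosets $W/W'$ (respectively $W'\backslash W$) inside $W$, and lift it to $\mathcal{H}(W)$ by replacing each generator of $W$ by the corresponding generator of $\mathcal{H}(W)$. The resulting set specialises to a transversal of cosets when $\mathcal{H}(W)$ specialises to the group algebra, so it is a natural candidate. For $G_5$ and $G_6$, which sit inside $G_7$ in a controlled way, I would transfer the bases constructed for $G_7$ by restriction of parameters. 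For $G_{12}$, $G_{14}$, $G_{16}$ and $G_{22}$, the bases already in the literature (obtained when proving the BMR freeness conjecture inductively from a parabolic subgroup) are of the form $\{\,s^j b\,\}$ by their very construction, so no new work is needed beyond identifying them as parabolic bases.

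The third step is the actual verification. Running the GAP3 program of Section~\ref{gap3section}, I would express every element of the candidate set $\mathcal{B}_s^l$ in terms of a known $R(W)$-basis of $\mathcal{H}(W)$, assemble the change-of-basis matrix $M$, and compute $\det M$. If $\det M$ is a unit of $R(W)$, then $\mathcal{B}_s^l$ is an $R(W)$-basis of $\mathcal{H}(W)$, and by construction $\mathfrak{B}_s^l$ is then a left $\mathcal{H}(W')$-basis of $\mathcal{H}(W)$ of the correct rank $|W|/|W'|$; the same procedure applied to $\mathcal{B}_s^r$ handles the right module structure.

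The main obstacle I expect is the search for $\mathfrak{B}_s^l$ and $\mathfrak{B}_s^r$ in the case of $G_{13}$, where the order of the group and the presence of several generators with different orders create many competing normal forms, most of which do not produce a unit determinant. Here I anticipate having to proceed through trial and error, guided by the GAP3 computation: when a candidate fails, the factorisation of $\det M$ in $R(W)$ typically indicates which cosets are poorly represented, and I would iterate by modifying the corresponding lifts until the determinant becomes a unit.
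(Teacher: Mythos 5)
Your proposal follows essentially the same route as the paper: reduce to cyclic parabolic subgroups generated by the generators (one per conjugacy class, and one-sided, by Marin's Proposition~\ref{lr}), build candidate bases via the Etingof--Rains lifting for $G_4$, $G_7$, $G_8$, $G_{13}$, transfer from $G_7$ to $G_5$ and $G_6$ by specialisation, invoke the existing literature bases for $G_{12}$, $G_{14}$, $G_{16}$, $G_{22}$, and certify each candidate by computing the determinant of the change-of-basis matrix with the GAP3 program. The only cosmetic difference is that the paper lifts elements of $\overline{W}=W/Z(W)$ and adjoins powers of the central element $z$ rather than lifting coset representatives of $W/W'$ directly, but the resulting sets are arranged in the same form $\{s^jb\}$, so the argument is the same.
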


We also point out here that all the parabolic bases that we give in this paper are good in the sense described above (that is, they satisfy conditions (i), (ii) and (iii)).  
Now, in Section \ref{parabtra}, we give a uniform proof of the parabolic trace conjecture for the groups above for which the BMM symmetrising trace conjecture is known to hold. In that regard, we also show that the BMM symmetrising trace conjecture  
holds for cyclic groups. We thus have the following:

\begin{thm}
The parabolic trace conjecture holds for the exceptional irreducible complex reflection groups $G_4$, $G_5$, $G_6$, $G_7$, $G_8$,  $G_{12}$, $G_{13}$ and $G_{22}$.
\end{thm}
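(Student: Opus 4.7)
The plan is to reduce the statement to cyclic parabolic subgroups generated by a single generator of $W$, and then exploit the ``good'' parabolic bases constructed in Section~\ref{Finding parabolic bases}. Since all the groups in the statement have rank $2$, every non-trivial proper parabolic subgroup of $W$ is cyclic. The cases $W'=\{1\}$ and $W'=W$ are immediate: in the first, both traces are trivially determined, and in the second, there is nothing to restrict. Up to conjugacy, any non-trivial proper parabolic subgroup of each $G_n$ on the list is of the form $W'=\langle s\rangle$, where $s$ is one of the distinguished generators of $W$. So the task reduces to this specific case.

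First, I would prove the BMM symmetrising trace conjecture for cyclic complex reflection groups. If $W'=\langle s\rangle$ has order $d$, the generic Hecke algebra $\mathcal{H}(W')$ is free over $R(W')$ with basis $\{1,s,s^2,\ldots,s^{d-1}\}$ (essentially by the definition of $\mathcal{H}(W')$ as a quotient of the free module generated by powers of $s$, modulo one relation of degree $d$). I would define $\tau'(s^j)=\delta_{j,0}$ and extend $R(W')$-linearly, then verify directly the three canonicality conditions appearing in Conjecture~\ref{BMM sym}. Because $\mathcal{H}(W')$ is commutative and its structure is completely controlled by the single deformed relation on $s$, each axiom can be checked by a short explicit computation against this basis.

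Second, I would invoke the left parabolic basis $\mathcal{B}_s^l=\{s^j b\,|\,0\leq j<d,\,b\in\mathfrak{B}_s^l\}$ produced for $W=G_n$ in Section~\ref{Finding parabolic bases}, which satisfies conditions (i), (ii) and (iii). Since $1\in\mathfrak{B}_s^l$ by (i), the elements $s^0,s^1,\ldots,s^{d-1}$ all lie in $\mathcal{B}_s^l$. Applying (iii) gives $\tau(s^j)=\delta_{s^j,1}$, and applying (ii), which guarantees that in the group-algebra specialisation $s^j=1$ if and only if $j=0$, yields $\tau(s^j)=\delta_{j,0}$. Thus $\tau(s^j)=\tau'(s^j)$ for every $j$, and by $R(W')$-linearity $\tau|_{\mathcal{H}(W')}=\tau'$. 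Since the BMM symmetrising trace conjecture is known for each of the $G_n$ listed, $\tau$ is indeed the canonical trace of $\mathcal{H}(W)$, and the equality of restrictions is precisely the content of the parabolic trace conjecture.

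I expect the main obstacle to be the first step: although the cyclic Hecke algebra is structurally very simple, each of the three canonicality conditions of Conjecture~\ref{BMM sym} has to be unwound against the explicit definitions (involving the symmetrising pairing, a distinguished automorphism, and compatibility with central elements), and some care is required to ensure that the candidate $\tau'$ satisfies all of them simultaneously and uniquely. Once this is established, the remainder of the argument is essentially bookkeeping that is made rigorous by properties (i)--(iii) of the parabolic bases already constructed.
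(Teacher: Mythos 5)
Your overall strategy is the one the paper follows: reduce to the cyclic parabolic subgroups $W_H=\langle s\rangle$ attached to the generators (one per conjugacy class, using Proposition \ref{conjtogen} and the fact that $\tau$ is a trace), prove the BMM symmetrising trace conjecture for cyclic groups with the basis $\{1,s,\dots,s^{d-1}\}$ and $\tau'(s^j)=\delta_{j,0}$ (this is exactly Proposition \ref{l1}), and then observe that everything comes down to the single condition $\tau(s^j)=0$ for $j=1,\dots,d-1$ (Corollary \ref{l3}). Where you diverge is in how that condition is verified. The paper checks it against the good basis $\mathcal{B}(W)$ that was used to \emph{define} $\tau$ in \cite{BCCK}, \cite{BCC} and \cite{MM10}: the relevant powers $s^j$ lie in $\mathcal{B}(W)\setminus\{1\}$, except for the order-$2$ generator $s_1$ of $G_7$, which requires the separate observation that $s_1$ is a linear combination of elements of $\mathcal{B}(G_7)\setminus\{1\}$. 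You instead read the condition off the good \emph{parabolic} bases of Section \ref{Finding parabolic bases}: since $1\in\mathfrak{B}^l_s$, the powers $s^j$ are distinct elements of $\mathcal{B}^l_s$, and property (iii) kills their traces. This is a legitimate mechanism, explicitly endorsed at the end of \S\ref{subbases}, and it has the small advantage of treating all three generators of $G_7$ uniformly.

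The genuine gap is that your mechanism does not cover $G_{12}$ and $G_{22}$, which are in the statement. Section \ref{Finding parabolic bases} constructs good parabolic bases only for $G_4$, $G_5$, $G_6$, $G_7$, $G_8$ and $G_{13}$; for $G_{12}$ and $G_{22}$ the paper merely records that bases from \cite{Ch17} and \cite{MaPf} happen to be parabolic, without establishing that they satisfy condition (iii) with respect to the canonical trace of \cite{MM10}. As written, your second step therefore has nothing to invoke for these two groups. The fix is cheap -- their generators have order $2$ and are all conjugate, so one only needs $\tau(s)=0$, which follows because $s$ belongs to the good basis of \cite{MM10} used to define $\tau$ there -- but this is precisely the paper's argument, not yours, and it must be added explicitly. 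A second, more minor, gloss: your reduction ``up to conjugacy'' to generators of $W$ silently uses that Condition \eqref{crit} is conjugation-invariant because $\tau$ is a trace and the embeddings of parabolic Hecke subalgebras for conjugate hyperplanes differ by conjugation; this deserves a sentence.
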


To conclude, we would like to thank the following people, who have worked with reflection groups and their parabolic subgroups and gave us a lot of useful information on the topic: Meinolf Geck, Thomas Gobet, Ivan Marin, Jean Michel, Christoph Sch\"onnenbeck and Don Taylor.

\section{Hecke algebras and parabolic Hecke subalgebras}
In this section we will define and discuss some properties of the algebraic objects that we study.

\subsection{Reflection groups}\label{rc}
Let $V$ be a finite dimensional $\mathbb{R}$-vector space.  A {\em real reflection group} is a finite subgroup of $\mathrm{GL}(V)$ generated by \emph{reflections}, that is,
elements of  $\mathrm{GL}(V)$ of order 2  whose fixed points in $V$ form a hyperplane. 
Coxeter \cite{Cox1} proved that every real reflection group $W$ admits a presentation as follows: $$W=\langle s\in S\;|\; (st)^{m_{st}}=1
\text{ for all } s,t \in S\rangle$$
where $m_{st}\in \mathbb{Z}_{\geq2}$ for $s\not=t$ and $m_{ss}=1$.  
On the other hand, as Coxeter also proved \cite{Cox2}, every finite group with such a presentation, that is, every {\em finite Coxeter group},  is a real reflection group. The tuple $(W,S)$ is called a {\em finite Coxeter system} and the elements of $S$ are known as \emph{simple reflections}.

A real reflection group $W$ is called \emph{irreducible} if it acts irreducibly on $V$, \emph{i.e.,} $V$ does not admit any proper $W$-invariant subspace.   
Since every real reflection group is a direct product of irreducible ones, one can restrict the study of  real reflection groups to the study of the irreducible ones. 
If $W$ is an irreducible real reflection group, then the \emph{rank} of $W$ is the dimension of $V$.
 Coxeter \cite{Cox2} classified (up to isomorphism) all  irreducible finite Coxeter groups: they consist of three one-parameter families of increasing rank (also known as \emph{classical types}), denoted by $A_n, B_n, D_n$ (with $A_n \cong \mathfrak{S}_{n+1}$, $B_n \cong (\Z/2\Z)^n \rtimes \mathfrak{S}_n$, $D_n \cong (\Z/2\Z)^{n-1} \rtimes \mathfrak{S}_n$), an one-parameter family of rank two groups denoted by $I_2(m)$ (these are the dihedral groups, with $I_2(3)=A_2$ and $I_2(4)=B_2$) and six exceptional groups, denoted by $E_6, E_7, E_8, F_4, H_3, H_4$.
The groups of type $A_n$, $B_n$, $D_n$, $E_6$, $E_7$, $E_8$, $F_4$ and $I_2(6)$ are known as {\em Weyl groups}, and they are exactly the ones for which we have $m_{st}\in\{2,3,4,6\}$ in the Coxeter presentation.

All finite Coxeter groups are particular cases of complex reflection groups. Let $V$ be a finite dimensional $\mathbb{C}$-vector space. A {\em complex reflection group} is a finite subgroup of $\mathrm{GL}(V)$ generated by \emph{pseudo-reflections}, that is,
non-trivial elements of  $\mathrm{GL}(V)$ whose fixed points in $V$ form a hyperplane, called the \emph{reflecting hyperplane} of the given pseudo-reflection.
As in the real case, a complex reflection group $W$ is \emph{irreducible} if it acts irreducibly on $V$ and, if that is the case, the  \emph{rank} of $W$ is
the dimension of $V$. The classification of irreducible complex reflection groups is due to  Shephard and Todd \cite{ShTo} and given by the following theorem.

\begin{thm}\label{ShToClas} Let $W \subset \mathrm{GL}(V)$ be an irreducible complex
	reflection group. Then one of
	the following assertions is true:
	\begin{itemize}
	\item There exist  $d,e,n \in \N^*$ with  $(de,e,n)\neq(2,2,2)$ such
		that $(W,V) \cong (G(de,e,n),\C^{n-\delta_{de,1}})$, where $G(de,e,n)$ is the group of all 
		$n \times n$ monomial matrices whose non-zero entries are ${de}$-th roots of unity, while the product of all non-zero
		entries is a $d$-th root of unity. \smallbreak
		\item $(W,V)$ is isomorphic to one of the 34 exceptional groups
		$G_n$, with $n=4,\ldots,37$ (ordered with respect to increasing rank).
	\end{itemize}
\end{thm}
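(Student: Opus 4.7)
The plan is to reduce the classification to two mostly independent problems: the \emph{imprimitive} case and the \emph{primitive} case, following the original Shephard--Todd approach but organising it so that the infinite series is handled uniformly and the exceptional list drops out from a bounded case analysis.

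First I would treat the imprimitive case. Recall that $W$ acts imprimitively on $V$ if there exists a direct sum decomposition $V = V_1 \oplus \cdots \oplus V_n$ with $n \geq 2$ such that $W$ permutes the summands; by irreducibility this permutation action is transitive and each $V_i$ can be taken to be a line. The stabiliser of a fixed line acts on it by scalars, and any pseudo-reflection in $W$ either multiplies one line by a root of unity and fixes the others, or swaps two lines with a twist. Analysing the subgroup $A \leq (\C^\times)^n$ of diagonal elements of $W$ and combining with the transitive action of a subgroup of $\mathfrak{S}_n$, one shows that $A$ consists of diagonal matrices whose entries are $de$-th roots of unity with product a $d$-th root of unity, yielding exactly $G(de,e,n)$. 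The excluded case $(de,e,n) = (2,2,2)$ must be removed because the resulting group is reducible (it is the Klein four-group split as $\Z/2 \times \Z/2$ on two separate lines).

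Next I would treat the primitive case. Here I would invoke the Jordan--Blichfeldt--Mitchell tradition of classifying finite primitive subgroups of $\mathrm{GL}(V)$ up to conjugacy in each small dimension, and then keep only those generated by pseudo-reflections. The dimension~$2$ case is the richest: using the classification of finite subgroups of $\mathrm{U}(2)$ (equivalently, the lift via $\mathrm{SU}(2) \to \mathrm{SO}(3)$ of the finite subgroups of $\mathrm{SO}(3)$), one identifies the primitive candidates and checks which are actually reflection groups, producing the tetrahedral, octahedral and icosahedral families $G_4,\ldots,G_{22}$. For dimensions $3$ through $8$, one uses the Chevalley--Shephard--Todd characterisation (a finite group $W \subset \mathrm{GL}(V)$ is a reflection group if and only if its invariant ring is a polynomial algebra) together with known primitivity lists in each dimension to narrow down candidates to $G_{23},\ldots,G_{37}$, verifying for each that it is generated by pseudo-reflections and computing its degrees to distinguish it. Finally, one shows that no primitive complex reflection groups exist in dimensions $\geq 9$, using for instance the bound on the number of reflecting hyperplanes combined with the transitive action on them.

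The hard part will be the primitive case in intermediate ranks (three to eight), where one must rule out many group-theoretic candidates and then verify that the surviving ones are pairwise non-conjugate and really are reflection groups. Two possible simplifications help: invariant-theoretic methods (via the degrees of fundamental invariants) give strong numerical constraints that quickly eliminate most candidates, and the Shephard--Todd tables of degrees and co-degrees provide a complete invariant that discriminates between the remaining groups. Combining the uniform treatment of the imprimitive family with this case analysis for primitive groups yields the dichotomy in the statement of Theorem~\ref{ShToClas}.
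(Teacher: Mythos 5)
This statement is the Shephard--Todd classification, which the paper does not prove at all: it is quoted as a classical theorem with a citation to Shephard and Todd, so there is no in-paper argument to compare your proposal against. Judged on its own, your outline follows the standard strategy (imprimitive versus primitive dichotomy), and the imprimitive half is essentially complete as sketched: the refinement of a system of imprimitivity to lines is forced because a pseudo-reflection interchanging two blocks $V_i, V_j$ fixes a hyperplane, which gives $\dim V_i \le 1$; the analysis of the diagonal subgroup $A$ together with the induced transitive permutation group then yields exactly the groups $G(de,e,n)$, and your reason for excluding $(de,e,n)=(2,2,2)$ (reducibility of the Klein four-group on $\C^2$) is correct. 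One small point you omit is the exponent $n-\delta_{de,1}$: for $de=1$ the group $G(1,1,n)\cong\mathfrak{S}_n$ is not irreducible on $\C^n$ and one must pass to the $(n-1)$-dimensional reflection representation.

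The genuine gap is in the primitive case, where your text is a roadmap rather than a proof. Everything difficult --- the determination of the primitive reflection subgroups of $\mathrm{GL}_2(\C)$ via the binary polyhedral groups, the elimination of candidates in ranks $3$ through $8$, and the nonexistence of primitive irreducible complex reflection groups in rank at least $9$ --- is delegated to ``known primitivity lists'' and to the Jordan--Blichfeldt--Mitchell classifications, and the suggested shortcut for rank $\ge 9$ (``the bound on the number of reflecting hyperplanes combined with the transitive action on them'') is not an argument one can actually run as stated; the real proofs here require substantial case analysis (as in Shephard--Todd's original paper or the modern treatment of Lehrer--Taylor). There is also a mild circularity risk in invoking the Chevalley--Shephard--Todd polynomial-invariants criterion to verify candidates, since the ``reflection group implies polynomial invariants'' direction was originally established case by case from the classification itself; you would need to use Chevalley's uniform proof of that direction. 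None of this makes your strategy wrong --- it is the correct and standard one --- but as written the primitive half is a citation dressed as a proof, which is in fact exactly how the paper itself treats the whole theorem.
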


Among the irreducible complex reflection groups we encounter the irreducible finite Coxeter groups. More precisely, $G(1,1,n) \cong A_{n-1}$, $G(2,1,n) \cong B_n$,  $G(2,2,n) \cong D_n$, $G(m,m,2) \cong I_2(m)$, 
$G_{23} \cong H_3$,  $G_{28}  \cong  F_4$, $G_{30}  \cong H_4$, $G_{35}  \cong  E_6$, $G_{36}  \cong  E_7$, $G_{37}  \cong E_8$.

We know by \cite[Theorem 0.1]{Bes2} that every complex reflection group admits a Coxeter-like presentation. The generators of this presentation are pseudo-reflections and the relations are of two types: there are the relations that give the order of the pseudo-reflections, and there are also some homogeneous relations between positive words in the generating elements (that is, equalities between words of the same length). We call the latter \emph{braid relations}.

Let $W\subset GL(V)$ be a complex reflection group. We call the \emph{field of definition} of $W$, and denote by $K(W)$, the field generated by the traces on $V$ of all the elements of $W$. Benard \cite{Ben} and Bessis \cite{Bes1} have proved that $K(W)$ is a splitting field for $W$. If $K(W) \subseteq \mathbb{R}$, then $W$ is a finite Coxeter group, and
if $K(W)=\mathbb{Q}$, then $W$ is a Weyl group.

\subsection{Parabolic subgroups}\label{parr1}

First let us  consider the case where $W\subset GL(V)$ is a real reflection group.
Let $(W,S)$ be a finite Coxeter system, let $J\subset S$ and let $W_J$ be the subgroup of $W$ generated by $J$. We call $W_J$ a \emph{standard parabolic subgroup} of $W$. A subgroup $W'$ of $W$ is a \emph{parabolic subgroup} if it is conjugate to a standard parabolic subgroup $W_J$.
Given that every reflection in $W$ is conjugate to an element of $S$, parabolic subgroups are examples of \emph{reflection subgroups}, that is, subgroups of $W$ that are generated by
reflections. Note, however, that not all reflection subgroups of $W$ are parabolic subgroups. For example, $D_n$ is a reflection subgroup of $B_n$, but not a parabolic one.

Obviously, all reflection subgroups of $W$ are Coxeter groups as well. A canonical way of identifying a Coxeter presentation for any reflection subgroup of $W$ is given in \cite{Deo, Dy87, Dy90}. 
In the simpler case of standard parabolic subgroups,
 we have that $(W_J, J)$ is a finite Coxeter system (see \cite{Dy90} and \cite[\S 1.29]{gepf}). This implies that $(wW_Jw^{-1}, wJw^{-1})$ is also a finite Coxeter system for all $w \in W$, whence we obtain a Coxeter presentation for any parabolic subgroup of $W$.

The  following result, which is a corollary of \cite[6, Proposition 1]{Bou05}, yields a characterisation of parabolic subgroups of finite Coxeter groups as pointwise stabilisers of subspaces of $V$.

\begin{prop}\label{par1}
Let $W\subset GL(V)$ be a real reflection group and let $X$ be a subspace of $V$. Then the group
$$Fix(X):=\{w\in W\;|\; w(x)=x  \text{ for all } x \in X\}$$
is a parabolic subgroup of $W$, generated by the simple reflections whose reflecting hyperplane contains $X$. Conversely, if $W'$
 is a a parabolic subgroup of $W$, then there exists a subspace $X\subset V$ such that 
	$W'=Fix(X)$.
	\end{prop}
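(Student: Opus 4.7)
The plan is to deduce both directions from the classical fact (due to Steinberg, and used in Bourbaki's formulation of the cited Proposition~1) that in a real reflection group, the pointwise stabiliser of any subset of $V$ is generated by the reflections it contains. Combined with the description of the stabiliser of a point in the closure of a fundamental chamber, this yields the claim almost immediately.

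First I would establish the forward direction. Let $\mathcal{A}$ be the arrangement of reflecting hyperplanes of $W$, and let $X' = \bigcap_{H \in \mathcal{A},\, X \subseteq H} H$. Then $X \subseteq X'$, and every reflection whose hyperplane contains $X$ also fixes $X'$ pointwise, so by Steinberg's theorem applied to both subsets, $Fix(X) = Fix(X')$. Choose a point $x_0$ in the relative interior of $X'$, i.e.\ a point lying on precisely those hyperplanes of $\mathcal{A}$ that contain $X'$. Such an $x_0$ lies in the closure of some chamber $C$ of $\mathcal{A}$. Up to replacing $S$ by its conjugate associated with $C$ (all such Coxeter systems being $W$-conjugate), we may assume that $C$ is the fundamental chamber of $(W,S)$. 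The standard stabiliser theorem for points in the closure of the fundamental chamber (Bourbaki, Ch.~V, \S 3, Prop.~1, which is the substantive content behind the cited Ch.~V, \S 6, Prop.~1) then says that $Fix(x_0)$ is the standard parabolic subgroup $W_J$, where $J = \{s \in S \mid x_0 \in H_s\}$. Since $x_0$ was chosen in the relative interior of $X'$, the condition $x_0 \in H_s$ is equivalent to $X' \subseteq H_s$, hence to $X \subseteq H_s$. Thus $Fix(X) = W_J$ is a standard parabolic subgroup, generated exactly by the simple reflections whose reflecting hyperplanes contain $X$.

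For the converse, let $W'$ be a parabolic subgroup of $W$, so that $W' = w W_J w^{-1}$ for some $J \subseteq S$ and $w \in W$. Set
\[
X := w\Bigl(\bigcap_{s \in J} H_s\Bigr).
\]
Every generator of $W'$ fixes $X$ pointwise by construction, giving $W' \subseteq Fix(X)$. For the reverse inclusion, apply the forward direction (with the Coxeter system $(W, wSw^{-1})$ conjugated from $(W,S)$ by $w$): $Fix(X)$ is generated by those simple reflections in $wSw^{-1}$ whose hyperplanes contain $X$, which are precisely the elements of $wJw^{-1}$, so $Fix(X) \subseteq W'$. The main delicate point in the argument is the choice of $x_0$ and of the chamber $C$ in its closure, together with the observation that different parabolic subgroups of $W$ correspond to different $W$-conjugate Coxeter systems; once this is handled, everything else reduces to invoking Steinberg's theorem and the known description of point stabilisers in the fundamental chamber.
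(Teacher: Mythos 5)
The paper offers no proof of this proposition, presenting it as a direct corollary of the cited Bourbaki result on point stabilisers, and your argument is exactly the standard derivation from that result (reduce $Fix(X)$ to the stabiliser of a generic point $x_0$ of the intersection of the reflecting hyperplanes containing $X$, then apply the stabiliser theorem for a chamber whose closure contains $x_0$), so it matches the intended approach. The only step worth spelling out is in the converse, where invoking the forward direction for the system $wSw^{-1}$ tacitly requires a point of $X=w\bigl(\bigcap_{s\in J}H_s\bigr)$ that lies in $\overline{wC_0}$ and on no reflecting hyperplane beyond those containing $X$; such a point exists in the relative interior of the face $\overline{wC_0}\cap X$ (or one can bypass genericity entirely via $Fix(X)\subseteq Fix(x_0)=wW_Jw^{-1}$ for any $x_0$ in that relative interior), so the gap closes immediately.
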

 
The alternative definition for parabolic subgroups given by the proposition above  can be also applied to the complex case. More precisely, let $W\subset \mathrm{GL}(V)$ be a complex reflection group. Let $X$ be a subspace of $V$ and set $W_X:=Fix(X)$. 
We call 
$W_X$ a \emph{parabolic subgroup} of $W$. 

\begin{rem}\rm
Often in bibliography $X$ is simply taken to be a subset of $V$. This does not affect the definition of parabolic subgroups, since $Fix(X)=Fix({{\rm Span}(X)})$, where
${\rm Span}(X)$ denotes the linear span of $X$. 
\end{rem}

As in the real case, we have that $W_X$ is a complex reflection group, thanks to the following result by Steinberg 
\cite[Theorem 1.5]{St64} (see also \cite{Le} for a shorter proof):

\begin{thm}\label{par2}
	Let $W\subset\mathrm{GL}(V)$ be a complex reflection group and let $X$ be a subspace of $V$. The parabolic subgroup $W_X$ of $W$ is generated by the pseudo-reflections of $W$ whose reflecting hyperplane contains $X$.
\end{thm}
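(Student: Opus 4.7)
The plan is to prove the two inclusions separately. Let $W'$ denote the subgroup of $W$ generated by the pseudo-reflections whose reflecting hyperplane contains $X$. The easy inclusion $W' \subseteq W_X$ is immediate: any such pseudo-reflection fixes its hyperplane pointwise, and hence fixes $X$ pointwise. For the reverse inclusion $W_X \subseteq W'$, I would first reduce to the case of a point stabilizer. Since $W$ is finite, for each $w \in W$ the fixed subspace $V^w = \ker(w - \mathrm{id}_V)$ either contains $X$ or meets $X$ in a proper subspace. The finite union of these proper intersections does not exhaust $X$, so a generic $v \in X$ lies on every $V^w$ containing $X$ and on no other. For such $v$ one has $\mathrm{Stab}_W(v) = W_X$, and the pseudo-reflections of $W$ fixing $v$ are precisely those whose reflecting hyperplane contains $X$. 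The problem is thus reduced to showing that, for any $v \in V$, the stabilizer $W_v := \mathrm{Stab}_W(v)$ is generated by the pseudo-reflections of $W$ fixing $v$.

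For this core claim I would invoke the Chevalley--Shephard--Todd theorem, which characterizes complex reflection groups among finite subgroups of $\mathrm{GL}(V)$ by the condition that their ring of invariants is a polynomial algebra. After translating $v$ to the origin, $W_v$ acts linearly on $V$, and the subgroup $W''$ generated by pseudo-reflections fixing $v$ satisfies $W'' \subseteq W_v$. The strategy is to prove equality by showing that $W_v$ is itself a pseudo-reflection group. Here one compares the global and local invariant theories: $\C[V]$ is a free $\C[V]^W$-module of rank $|W|$, the $W$-orbit of $v$ has size $|W|/|W_v|$, and the ramification of the quotient map $V \to V/W$ at $v$ is accounted for entirely by $W_v$. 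Passing to completions at $v$, where the $W_v$-action becomes a linear action on the formal neighbourhood and the pseudo-reflections fixing $v$ match the pseudo-reflections of this linearised action, one shows that $\C[V]^{W_v}$ is a polynomial ring. Applying Chevalley--Shephard--Todd to $W_v$ then forces $W_v = W''$.

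The hardest step will be the last one: rigorously matching the global invariant theory of $W$ with the local invariant theory at $v$ so as to conclude that $\C[V]^{W_v}$ is polynomial. The delicate point is isolating the ramification at $v$ from the rest of the quotient map, which typically requires either a careful Jacobian/multiplicity computation against the defining forms of the reflecting hyperplanes, or a formal/analytic argument on the completed local rings. A cleaner alternative, which I would pursue in parallel if the invariant-theoretic route becomes cumbersome, is the shorter proof of Lehrer \cite{Le} cited in the excerpt, which bypasses explicit ring computations by a direct geometric argument on the arrangement of hyperplanes passing through $v$.
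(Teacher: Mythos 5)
The paper offers no proof of this statement: it is Steinberg's fixed point theorem, quoted with references to Steinberg's original article \cite{St64} and to Lehrer's shorter proof \cite{Le}, so there is nothing internal to compare you against. On its own merits, your reduction to point stabilisers is correct and complete: choosing $v\in X$ outside the finitely many proper subspaces $V^w\cap X$ with $w\notin W_X$ gives $\mathrm{Stab}_W(v)=W_X$, and a pseudo-reflection of $W$ fixes such a $v$ exactly when its reflecting hyperplane contains $X$. Your route for the core claim --- prove that $\C[V]^{W_v}$ is a polynomial ring and apply the Serre direction of Chevalley--Shephard--Todd --- is a known, valid strategy (the ``\'etale'' proof), and it is genuinely different from both Steinberg's original argument and Lehrer's counting proof via the Shephard--Todd--Solomon identity $\sum_{w\in W}t^{\dim V^w}=\prod_i(t+m_i)$.

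The caveat is that the one step carrying all the content is only gestured at. To complete it you must establish $\widehat{\C[V]^{W}}_{\pi(v)}\cong\bigl(\widehat{\C[V]}_v\bigr)^{W_v}$: tensor the finite free extension $\C[V]^W\subseteq\C[V]$ (Chevalley's direction, applied to $W$) with the completion at $\pi(v)$, decompose the resulting complete semilocal ring as the product of the completions $\widehat{\C[V]}_{v'}$ over the orbit $Wv$, note that $W$ permutes the factors transitively with the stabiliser of the factor at $v$ equal to $W_v$, and take $W$-invariants, using that invariants of a finite group commute with flat base change in characteristic $0$. The left-hand side is a formal power series ring, hence regular; since $W_v$ fixes $v$ and acts linearly, the right-hand side is the completion at the irrelevant ideal of the graded ring $\C[V]^{W_v}$ in the coordinates $x_i-v_i$, and a graded algebra whose completion at the irrelevant maximal ideal is regular is polynomial. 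Only then does Chevalley--Shephard--Todd give that $W_v$ is generated by the pseudo-reflections it contains, which are precisely the pseudo-reflections of $W$ fixing $v$, whence $W_v=W''$. None of these steps fails, but as written you have named the bridge without crossing it; if that bookkeeping feels heavy, Lehrer's one-page argument, which you cite as a fallback, avoids it entirely.
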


\begin{exmp}\rm
We have $W_V=\{1\}$ and $W_{\{0\}}=W$.
\end{exmp}

\begin{exmp}\label{minimal parabolic}\rm  
If $H$ is a reflecting hyperplane of $W$, then the parabolic subgroup $W_H$ 
is isomorphic to a finite subgroup of $\mathbb{C}^{\times}$, whence $W_H$ is cyclic.
Thus, $W_H$ is a minimal non-trivial parabolic subgroup of $W$, whose non-trivial elements are pseudo-reflections. 
\end{exmp}

The following result is a straightforward corollary of Theorem \ref{par2} and it allows us to restrict to the case where $X$ is an intersection of reflecting hyperplanes.

\begin{cor}
Let $W\subset\mathrm{GL}(V)$ be a complex reflection group and let $X$ be a subspace of $V$. Let $I$ denote the intersection of all reflecting hyperplanes of $W$ that contain $X$. Then $W_X=W_I$.
\end{cor}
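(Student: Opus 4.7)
The plan is to prove equality by showing both inclusions, using Theorem \ref{par2} to control the generators of each side.

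First I would observe the easy inclusion $W_I \subseteq W_X$. By construction, $I$ is an intersection of hyperplanes each of which contains $X$, so $X \subseteq I$. Therefore any element of $W$ that fixes $I$ pointwise automatically fixes $X$ pointwise, which gives $W_I \subseteq W_X$.

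For the reverse inclusion $W_X \subseteq W_I$, the key is Theorem \ref{par2}. It tells us that $W_X$ is generated by the pseudo-reflections of $W$ whose reflecting hyperplane contains $X$. Let $s$ be such a generator and let $H_s$ be its reflecting hyperplane. Then $H_s$ is one of the hyperplanes in the intersection defining $I$, so $I \subseteq H_s$. Since $s$ fixes $H_s$ pointwise, it fixes $I$ pointwise as well, and hence $s \in W_I$. Since $W_X$ is generated by such elements, this gives $W_X \subseteq W_I$.

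Combining the two inclusions yields $W_X = W_I$. There is no real obstacle: once Theorem \ref{par2} is available, the argument is just a short two-sided containment based on the tautologies $X \subseteq I$ and $I \subseteq H$ for every reflecting hyperplane $H$ containing $X$. The only point where some care is needed is to remember to invoke Theorem \ref{par2} on the side $W_X$, since the definition via pointwise stabilisers does not immediately tell us that a subspace and its enlargement $I$ have the same stabiliser; the fact that pseudo-reflections suffice to generate $W_X$ is precisely what reduces the problem to checking things one hyperplane at a time.
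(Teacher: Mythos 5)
Your proof is correct and is exactly the argument the paper has in mind (the paper labels this a ``straightforward corollary'' of Theorem \ref{par2} and omits the details): the inclusion $W_I\subseteq W_X$ follows from $X\subseteq I$, and the reverse inclusion follows by applying Theorem \ref{par2} to $W_X$ and checking each generating pseudo-reflection fixes $I$ because $I\subseteq H_s$. Nothing is missing.
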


By  \cite[Theorem 2.1]{mut}, in order to find the parabolic subgroups of a complex reflection group,  it is enough to determine the parabolic subgroups of irreducible complex reflection groups. Following \cite[Lemma 3.3 and Theorem 3.11]{taylor} (see also \cite[Theorem 3.6]{mut}), $W'$ is a parabolic subgroup of $G(de,e,n)$ if and only if $W'$ is isomorphic to a direct product of the form
$$G(de,e,n_0) \times \mathfrak{S}_{n_1} \times \mathfrak{S}_{n_2} \times \cdots \times
\mathfrak{S}_{n_k}$$  
with $n_0, n_1, \ldots,n_k \in \N$ and $\sum_{j=0}^k n_j = n$ (if $n_j=0$, then the corresponding factor is omitted). As far as the exceptional groups are concerned, a list of all parabolic subgroups (up to conjugation)  for each group $G_n$, $n=4,\dots, 37$ is given in  \cite[Appendix C]{OT} and in \cite{ta}. In the latter, Taylor uses MAGMA to construct all reflection subgroups for each $G_n$ and then  checks which of them are the pointwise stabilisers of their space of fixed points.

\subsection{Braid groups}\label{br} Let $W\subset\mathrm{GL}(V)$ be a complex reflection group. Let $\mathcal{R}$ denote the set of pseudo-reflections of $W$, and let $\mathcal{A}$ denote the set of reflecting hyperplanes of $W$. Set $\mathcal{M}:=V\setminus \cup_{H\in \mathcal{A}}H$. 
As shown in \cite[\S2B]{BMR}, we can always  restrict to the case where
$W$ is \emph{essential}, meaning that $\cap_{H\in \mathcal{A}}H=\{0\}$. 
Steinberg  \cite[Corollary 1.6]{St64} proved that the action of $\mathcal{M}$ on $X$ is free.
Therefore, it defines a Galois covering $\mathcal{M}\rightarrow \mathcal{M}/W$, which gives rise to the following exact sequence for every $x\in \mathcal{M}$:
 \begin{center}
\begin{tikzcd}
	1 \arrow[rightarrow]{r} 
	& \pi_1(\mathcal{M},x)\arrow[rightarrow]{r}
	& \pi_1(\mathcal{M}/W, \underline{x})\arrow{r}
	& W\arrow{r}
	& 1,
\end{tikzcd}
\end{center}
where $\underline{x}$ denotes the image of $x$  under the canonical surjection $ \mathcal{M}\rightarrow \mathcal{M}/W$. 

Let now $x$ be some fixed basepoint of $\mathcal{M}$. 
In \cite[\S 2]{BMR}, Brou\'e, Malle and Rouquier  defined the {\em pure braid group} $P(W)$ and the 
{\em braid group} $B(W)$ of $W$ as $P(W):=\pi_1(\mathcal{M},x)$ and $B(W):=\pi_1(\mathcal{M}/W, \underline{x})$ respectively. Moreover, they associated to every element of $\mathcal{R}$ homotopy classes in $B(W)$ that we call \emph{braided reflections}. By  \cite[Theorem 0.1]{Bes2}, the braid group $B(W)$ admits a presentation with the generators being braided reflections and the relations being homogeneous relations between positive words in the generating elements, that is, braid relations.
In fact, the Coxeter-like presentation of $W$ is derived from this (Artin-like) presentation of $B(W)$, thanks to the following short exact sequence:
 \begin{center}
\begin{tikzcd}
	1 \arrow[rightarrow]{r} 
	& P(W)\arrow[rightarrow]{r}
	& B(W)\arrow{r}
	& W\arrow{r}
	& 1.
\end{tikzcd}
\end{center}

From now on, we will denote by $Z(G)$ the centre of any group $G$.
If $W$ is an irreducible complex reflection group, then $Z(W)$ is in bijection with a finite subgroup of $\C^{\times}$, and it is thus a cyclic group. 
We denote by $\boldsymbol{\pi}$ and $\boldsymbol{\beta}$ the homotopy classes of the loops $t\mapsto x \,{\rm exp}(2\pi i t)$ and $t\mapsto  x\,{\rm exp}(2\pi i t/|Z(W)|)$ respectively. Brou\'e, Malle and Rouquier \cite[Lemma 2.22 (2)]{BMR} proved that  $\boldsymbol{\pi} \in Z(P(W))$ and $\boldsymbol{\beta} \in Z(B(W))$, and they conjectured that each of the two elements  generates the centre it belongs to as a cyclic group. The fact that $Z(P(W)) = \langle \boldsymbol{\pi} \rangle$ was proved by Digne, Marin and Michel \cite[Theorem 1.2]{DMM}, while it was Bessis \cite[Theorem 12.8]{Bes3} who proved that
$Z(B(W)) = \langle \boldsymbol{\beta} \rangle$.

\subsection{Hecke algebras} 
Let $H \in \mathcal{A}$. We have seen in Remark \ref{minimal parabolic} that the minimal parabolic subgroup $W_H$ is cyclic. The generator of $W_H$ whose
only nontrivial eigenvalue is equal to ${\rm exp}(2\pi i/|W_H|)$ is called a \emph{distinguished} pseudo-reflection. We have the following result \cite[Corollary 2.38]{BCM}:

\begin{prop}\label{conjtogen}
If $S$ is a  generating set of distinguished pseudo-reflections for $W$, then any distinguished pseudo-reflection of $W$ is conjugate to an element of $S$.
\end{prop}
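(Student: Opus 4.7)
The plan is to reformulate the statement in terms of hyperplane orbits and then detect each orbit by a linear character of $W$. First, I would verify that for $w \in W$ we have $w s_H w^{-1} = s_{w(H)}$: the element $w s_H w^{-1}$ fixes $w(H)$ pointwise, and its only nontrivial eigenvalue equals that of $s_H$, namely $\exp(2\pi i/|W_H|) = \exp(2\pi i/|W_{w(H)}|)$, so it is the distinguished pseudo-reflection associated to $w(H)$. Thus $H \mapsto s_H$ is a $W$-equivariant bijection between $\mathcal{A}$ and the set of distinguished pseudo-reflections of $W$, and the proposition reduces to showing that $\{H_s : s \in S\}$ intersects every $W$-orbit on $\mathcal{A}$.

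The key tool I would then use is the construction, for each orbit $C \in \mathcal{A}/W$, of a linear character $\epsilon_C : W \to \C^\times$ which sends $s_H$ to a primitive $e_C$-th root of unity if $H \in C$ (where $e_C := |W_H|$ is constant on $C$) and to $1$ if $H \notin C$. Concretely, picking linear forms $\alpha_H \in V^*$ with $\ker(\alpha_H) = H$, the polynomial $j_C := \prod_{H \in C} \alpha_H \in \C[V]$ is a semi-invariant of $W$ (since $C$ is $W$-stable), and the formula $w \cdot j_C = \epsilon_C(w)\, j_C$ defines $\epsilon_C$. The non-triviality and precise value of $\epsilon_C$ on $s_H$ come from the classical decomposition $W^{\mathrm{ab}} \cong \bigoplus_{C \in \mathcal{A}/W} \Z/e_C\Z$, in which the image of any $s_H$ with $H \in C$ generates the $C$-th factor.

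With the characters $\epsilon_C$ in hand, the conclusion is immediate: suppose for contradiction that some orbit $C$ is disjoint from $\{H_s : s \in S\}$; then $\epsilon_C(s) = 1$ for every $s \in S$, so $\epsilon_C$ is trivial on $\langle S \rangle = W$. This contradicts $\epsilon_C$ sending any $s_H$ with $H \in C$ to a primitive $e_C$-th root of unity. Hence every $H \in \mathcal{A}$ satisfies $H = w(H_s)$ for some $w \in W$ and some $s \in S$, so $s_H = w s w^{-1}$, as required. The main obstacle in this plan is establishing the precise value of $\epsilon_C(s_H)$ for $H \in C$: writing out $s_H \cdot j_C$ directly, one finds $\epsilon_C(s_H) = \zeta^{-1}\prod_{H' \in C \setminus \{H\}} c_{H'}$, where $\zeta$ is the nontrivial eigenvalue of $s_H$ and the $c_{H'} \in \C^\times$ are defined by $s_H \cdot \alpha_{H'} = c_{H'} \alpha_{s_H(H')}$; one must check that the auxiliary product combines with $\zeta^{-1}$ to yield a primitive $e_C$-th root of unity. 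This verification amounts precisely to the $W^{\mathrm{ab}}$ computation above, which is classical and available in BMR and in Lehrer--Taylor.
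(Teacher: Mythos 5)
Your argument is correct, but note that the paper does not actually prove this proposition: it is quoted verbatim from \cite[Corollary 2.38]{BCM}, so there is no internal proof to compare against. Your route --- reducing to the statement that $\{H_s : s\in S\}$ meets every $W$-orbit on $\mathcal{A}$ via the equivariance $w s_H w^{-1}=s_{w(H)}$, and then detecting each orbit $\mathcal{C}$ by the linear character $\epsilon_{\mathcal{C}}$ of the semi-invariant $j_{\mathcal{C}}=\prod_{H\in\mathcal{C}}\alpha_H$ --- is the standard semi-invariant/abelianisation proof (Stanley, Orlik--Terao, Lehrer--Taylor), and it is sound. The one step you flag as the ``main obstacle'' is in fact easier than you suggest and does not require importing the full computation of $W^{\mathrm{ab}}$: decompose $\mathcal{C}$ into $\langle s_H\rangle$-orbits. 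A singleton $\{H'\}$ with $H'\neq H$ contributes $1$, because $s_H$ acts on $V^*$ with eigenvalue $1$ on a hyperplane and $\zeta^{-1}$ only on the line $\C\alpha_H$, so $s_H\cdot\alpha_{H'}=\alpha_{H'}$; the singleton $\{H\}$ contributes $\zeta^{-1}$; and for a $k$-element orbit of some $H'$ with $k>1$ one may choose $\alpha_{s_H^i H'}:=s_H^i\cdot\alpha_{H'}$, and since $s_H^k$ fixes $H'\neq H$ the same eigenvalue argument gives $s_H^k\cdot\alpha_{H'}=\alpha_{H'}$, so $s_H$ cyclically permutes these linear forms and the block contributes $1$. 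Hence $\epsilon_{\mathcal{C}}(s_H)=\det(s_H)^{-1}=\zeta^{-1}$ exactly when $H\in\mathcal{C}$ (a primitive $e_{\mathcal{C}}$-th root of unity, and $e_{\mathcal{C}}\geq 2$), and $\epsilon_{\mathcal{C}}(s_H)=1$ otherwise, which is all your contradiction needs. So your proposal, completed this way, is a self-contained and more elementary proof than the bare citation the paper relies on.
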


It is easy to see that if two pseudo-reflections are conjugate in $W$, then their reflecting hyperplanes belong to the same orbit under  the action of $W$ on $\mathcal{A}$. On the other hand, if $H'=wH$ for some $w \in W$, then 
$W_{H'} = wW_{H}w^{-1}$.
For every orbit $\mathcal{C} \in \mathcal{A}/W$, let
$e_{\mathcal{C}}$ denote the common order of the minimal parabolic subgroups $W_H$, where $H$
is any element of $\mathcal{C}$. 
Let 
$R(W):=\mathbb{Z}[\textbf{u},\textbf{u}^{-1}]$ denote the Laurent polynomial ring
in a set of indeterminates $\textbf{u}=(u_{\mathcal{C},j})_{(\mathcal{C} \in
	\mathcal{A}/W)(1\leq j \leq e_{\mathcal{C}})}$. The \emph{generic
	Hecke algebra}  $\mathcal{H}(W)$ of $W$ is  the quotient of the group
algebra $R(W)[B(W)]$ by the ideal
generated by the elements of the form
\begin{equation}\label{Hecker}
(s-u_{\mathcal{C},1})(s-u_{\mathcal{C},2}) \cdots (s-u_{\mathcal{C},e_{\mathcal{C}}}),
\end{equation}
where $\mathcal{C}$ runs over the set $\mathcal{A}/W$ and
$s$ runs over the set of braided reflections whose images in $W$ have reflecting hyperplanes in $\mathcal{C}$. We have
$$(s-u_{\mathcal{C},1})(s-u_{\mathcal{C},2}) \cdots (s-u_{\mathcal{C},e_{\mathcal{C}}})=0 \Leftrightarrow
{s}^{e_{\mathcal{C}}}-a_{{\mathcal{C}},e_{\mathcal{C}-1}}s^{e_{\mathcal{C}}-1}-a_{\mathcal{C},e_{\mathcal{C}-2}}{s}^{e_{\mathcal{C}}-2}-\dots-a_{{\mathcal{C}},0}=0,$$
where $a_{{\mathcal{C}},e_{\mathcal{C}}-j}:=(-1)^{j-1}f_j(u_{{\mathcal{C}},1},\dots,u_{\mathcal{C},e_{\mathcal{C}}})$ with $f_j$ denoting the $j$-th elementary symmetric polynomial, for $j=1,\ldots,e_{\mathcal{C}}$. 
Therefore, in the presentation of $\mathcal{H}(W)$, we have 
the images of the braided reflections of $B(W)$ as generators, and
two kinds of relations:
the \emph{braid relations}, coming from the Artin-like presentation of $B(W)$, and the \emph{positive Hecke relations}: 
\begin{equation}\label{Heckerp}
{s}^{e_{\mathcal{C}}}=a_{{\mathcal{C}},e_{\mathcal{C}-1}}s^{e_{\mathcal{C}}-1}+a_{\mathcal{C},e_{\mathcal{C}-2}}{s}^{e_{\mathcal{C}}-2}+\dots+a_{{\mathcal{C}},0}.
\end{equation}

We notice now that $a_{{\mathcal{C}},0}= (-1)^{e_{\mathcal{C}-1}}u_{{\mathcal{C}},0}u_{{\mathcal{C}},1}\dots u_{{\mathcal{C},e_{\mathcal{C}-1}}} \in R(W)^{\times}$. Hence,  $s$ is invertible in $\mathcal{H}(W)$ with
\begin{equation}\label{invhecke}
{ s}^{-1}=a_{\mathcal{C},0}^{-1}\,{s}^{e_{\mathcal{C}}-1}-a_{\mathcal{C},0}^{-1}\,a_{{\mathcal{C}},e_{\mathcal{C}}-1}s^{e_{\mathcal{C}}-2}-a_{\mathcal{C},0}^{-1}\,a_{{\mathcal{C}},e_{\mathcal{C}}-2}{s}^{e_{\mathcal{C}}-3}-\dots- a_{\mathcal{C},0}^{-1}\,a_{{\mathcal{C}},1}.
\end{equation}
We call  relations \eqref{invhecke} the \emph{inverse Hecke relations}. 

Finally, we define a \emph{word} in the generators of $\mathcal{H}(W)$ to be any product of the generators of $\mathcal{H}(W)$ or their inverses. If a word does not include inverses of the generators, we call it a \emph{positive word}.

\subsection{Parabolic Hecke subalgebras}
Let $I$ be an intersection of reflecting hyperplanes of $W$. We set $\mathcal{A}_I:=\{H\in \mathcal{A}\;|\; I\subset H\}$ and $\mathcal{M}_I:=V\setminus \cup_{H\in \mathcal{A}_I}H$. We fix as before an element $x\in \mathcal{M}$ and we set $P_I(W):=\pi_1(\mathcal{M}_I, x)$ and $B_I(W):=\pi_1(\mathcal{M}_I/W, \underline{x})$. We can then define a morphism of short exact sequences (see \cite[\S 2.D]{BMR})

\begin{center}
\begin{tikzcd}
	1 \arrow[rightarrow]{r} 
	& P_I(W)\arrow[rightarrow]{r}\arrow[hookrightarrow]{d}
	& B_I(W)\arrow{r}\arrow[hookrightarrow]{d}
	& W_I\arrow{r}\arrow[hookrightarrow]{d} 
	& 1\\
	1 \arrow[rightarrow]{r} 
	& P(W)\arrow[rightarrow]{r}
	& B(W)\arrow{r}
	& W\arrow{r}
	& 1
\end{tikzcd}
\end{center}
where all vertical arrows are injective and the embedding $B_I(W)\hookrightarrow B(W)$ is well-defined up to $P(W)$-conjugation. Moreover, this embedding sends braided reflections to braided reflections.

Let $W_I$ be the corresponding parabolic subgroup of $W$ and let $\textbf{v}=(v_{\mathcal{C},j})_{(\mathcal{C} \in
	\mathcal{A}_I/W_I)(1 \leq j \leq e_{\mathcal{C}})}$ be a set of indeterminates. 	
	We denote by $R_I(W)$ the Laurent polynomial ring $\mathbb{Z}[\textbf{v},\textbf{v}^{-1}]$.
Let $\alpha: \mathcal{A}_I/W_I\rightarrow \mathcal{A}/W$ be the map that sends a $W_I$-orbit of hyperplanes onto the corresponding $W$-orbit. We can then define a  ring morphism
$\varphi:\;R_I(W)\rightarrow R(W)$ given by $\varphi(v_{\mathcal{C},j})=u_{\alpha(\mathcal{C}),j}$ for $j=1,\ldots,e_{\mathcal{C}}$.

Now, the embedding $B_I(W)\hookrightarrow B(W)$ induces an inclusion (up to conjugation)
\begin{equation}\label{phitensor}
\mathcal{H}(W_I)\otimes_{\varphi}R(W)\hookrightarrow \mathcal{H}(W)
\end{equation}
whose image is called the \emph{parabolic Hecke subalgebra} of $\mathcal{H}(W)$ associated with $I$. We denote this algebra by $\mathcal{H}_I(W)$. To put it simply, the parabolic Hecke subalgebra $\mathcal{H}_I(W)$ is the Hecke algebra $\mathcal{H}(W_I)$ of $W_I$ defined over the ring  $R(W)$.

\section{The freeness and trace conjectures for Hecke algebras}

In this section, we will first discuss the two fundamental conjectures on the structure of Hecke algebras associated with complex reflection groups: the ``freeness conjecture'', stated by Brou\'e, Malle and Rouquier \cite{BMR}, and the ``(symmetrising) trace conjecture'', stated by Brou\'e, Malle and Michel \cite{BMM}. The first one is now a theorem, the second one is still open for most complex reflection groups. In \cite{BCCK} and \cite{BCC}, using computational methods, we proved the trace conjecture for several  exceptional groups of rank $2$. Our methods rely on finding a ``good'' basis for the Hecke algebra associated with each group.
At the end of this section, we present a GAP3 program that expresses any element of the Hecke algebra as a linear combination of the elements of our good basis. This in turn helps us to find more bases for the Hecke algebra by computing the change of basis matrix.

From now on, let $W\subset\mathrm{GL}(V)$ be a complex reflection group and let $\mathcal{H}(W)$ be the generic Hecke algebra of $W$.

\subsection{Bases for Hecke algebras}
If $W$ is a real reflection group, then $\mathcal{H}(W)$ admits a standard basis $(T_w)_{w \in W}$ indexed by the elements of $W$  \cite[IV, \S 2]{Bou05}. 
Brou\'e, Malle and Rouquier conjectured that a similar result holds for non-real complex reflection groups. More specifically, they stated the following \cite[\S 4]{BMR}: 

\begin{conj}[The BMR freeness conjecture]\label{BMR free}
	The algebra $\mathcal{H}(W)$ is a free $R(W)$-module of rank  $|W|$.
\end{conj}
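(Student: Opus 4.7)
The plan is a case-by-case analysis driven by the Shephard–Todd classification (Theorem \ref{ShToClas}), after first reducing to the irreducible case. The reduction uses the fact that the braid group of a direct product of complex reflection groups splits as a direct product, so the generic Hecke algebra of a product is a tensor product of the factors' Hecke algebras over the common ring of definition, and freeness and ranks multiply correctly. This reduces the problem to verifying freeness for each irreducible group, which by Theorem \ref{ShToClas} is either a member of the infinite series $G(de,e,n)$ or one of the $34$ exceptional groups $G_4,\dots,G_{37}$.

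For each irreducible $W$, I would follow a three-step template. First, I would write down an explicit candidate set $\mathcal{B}\subseteq\mathcal{H}(W)$ consisting of exactly $|W|$ words in the braided reflections, chosen by fixing for each $w\in W$ a preferred reduced-type expression and lifting it letter by letter to $B(W)$; the Etingof–Rains surjection makes this recipe available in general, and in the real case it recovers the standard $T_w$ basis. Second, I would prove by induction on word length that $\mathcal{B}$ spans $\mathcal{H}(W)$ as an $R(W)$-module: it suffices to show that for every braided reflection $s$ and every $b\in\mathcal{B}$, the product $sb$ lies in the $R(W)$-span of $\mathcal{B}$, with negative powers handled by the inverse Hecke relations \eqref{invhecke} and powers $s^{e_{\mathcal{C}}}$ lowered using the positive Hecke relations \eqref{Heckerp}. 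Third, linear independence follows essentially for free: specialising each indeterminate $u_{\mathcal{C},j}$ to ${\rm exp}(2\pi i(j-1)/e_{\mathcal{C}})$ collapses $\mathcal{H}(W)$ onto the group algebra $\mathbb{C}[W]$ and sends $\mathcal{B}$ onto $W$, which is a genuine $\mathbb{C}$-basis; a standard Nakayama-style lifting argument at the maximal ideal of this specialisation then promotes the spanning set $\mathcal{B}$ of size $|W|$ to a basis over $R(W)$.

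For the groups of the infinite series $G(de,e,n)$ the spanning step is executed combinatorially, using Ariki–Koike-style normal forms that generalise the combinatorics of the symmetric group, so the induction terminates for length-theoretic reasons. The main obstacle, and the reason the theorem is proved case by case rather than uniformly, is the spanning step for the exceptional groups: in the absence of a Coxeter-like length function on $B(W)$ the rewriting process driven by the braid and Hecke relations can loop, and certifying that every product $sb$ reduces to an $R(W)$-combination of elements of $\mathcal{B}$ requires a carefully engineered choice of $\mathcal{B}$ together with an often substantial computer-algebra verification of the kind described in the introduction. The rank-$2$ exceptional cases treated by the first author in \cite{Ch18,Ch17} via the Etingof–Rains surjection illustrate exactly this approach, and are in turn the starting point for the parabolic bases constructed later in the present paper.
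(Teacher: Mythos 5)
Your plan matches the proof as it actually exists in the literature and as the paper presents it: a Shephard--Todd case-by-case analysis in which one exhibits, for each irreducible $W$, a spanning set of $|W|$ lifted words (combinatorial normal forms for $G(de,e,n)$, Etingof--Rains-style choices plus computer verification for the exceptional groups) and then upgrades spanning to freeness. Your Nakayama-style specialisation argument in the third step is precisely the content of Theorem \ref{mention}, which the paper quotes from \cite{BMR} and \cite{Mar43}, so nothing essential is missing beyond the (necessarily enormous, and here cited rather than reproduced) case-by-case verifications themselves.
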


Note that, thanks to the following result, which can be found in 	\cite[Proof of Theorem 4.24]{BMR} (for another detailed proof, one may also see \cite[Proposition 2.4]{Mar43}), 
	in order to prove the BMR freeness conjecture, it is enough to find a spanning set of $\mathcal{H}(W)$ as an $R(W)$-module consisting of $|W|$ elements.
	\begin{thm}\label{mention}
	 If $\mathcal{H}(W)$ is generated as $R(W)$-module by  $|W|$ elements, then it is a free 
	 $R(W)$-module of rank  $|W|$.
	\end{thm}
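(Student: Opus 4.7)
The plan is to consider the surjection $\varphi\colon R(W)^{|W|} \twoheadrightarrow \mathcal{H}(W)$ sending the standard basis of $R(W)^{|W|}$ to a chosen generating family $b_1,\dots,b_{|W|}$, and to prove that its kernel $K$ vanishes; this will identify $\mathcal{H}(W)$ with a free $R(W)$-module of rank $|W|$. First I would exploit the ``group algebra specialisation'' $\theta\colon R(W)\to\mathbb{C}$ given by $u_{\mathcal{C},j}\mapsto \zeta_{\mathcal{C}}^{\,j-1}$, with $\zeta_{\mathcal{C}}$ a primitive $e_{\mathcal{C}}$-th root of unity: under $\theta$ the positive Hecke relation \eqref{Heckerp} collapses to $s^{e_{\mathcal{C}}}=1$, so $\mathcal{H}(W)\otimes_{\theta}\mathbb{C}\cong \mathbb{C}[W]$, which has complex dimension $N:=|W|$. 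Applying $-\otimes_\theta\mathbb{C}$ to the exact sequence $0\to K\to R(W)^N\to\mathcal{H}(W)\to 0$ yields an exact sequence $K\otimes_\theta\mathbb{C}\to\mathbb{C}^N\twoheadrightarrow\mathbb{C}[W]$ whose rightmost arrow is a surjection between $N$-dimensional $\mathbb{C}$-vector spaces, hence an isomorphism; consequently the image of $K\otimes_\theta\mathbb{C}$ in $\mathbb{C}^N$ is zero.

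The decisive step, which I expect to be the main obstacle, is to establish that the generic rank of $\mathcal{H}(W)$ over $R(W)$ is exactly $N$, i.e.\ that $\dim_F \mathcal{H}(W)\otimes_{R(W)} F = N$, where $F=\mathrm{Frac}(R(W))$. The upper bound is immediate from the spanning hypothesis, but the matching lower bound is subtle: upper semi-continuity of fibre dimension only gives the ``wrong'' inequality, bounding the generic rank above by any special-fibre rank. The standard way around this is representation-theoretic: one lifts each of the $|W|$ pairwise nonisomorphic irreducible $\mathbb{C}$-representations of $W$ to a representation of $\mathcal{H}(W)\otimes_{R(W)}\bar F$ of the same dimension over an algebraic closure $\bar F$ of $F$, and verifies that the lifts remain pairwise nonisomorphic; the squared dimensions still sum to $|W|=N$, giving $\dim_{\bar F}\mathcal{H}(W)\otimes_{R(W)}\bar F \geq N$ and hence equality with the upper bound.

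Once the generic rank equals $N$, tensoring $0\to K\to R(W)^N\to\mathcal{H}(W)\to 0$ with $F$ forces $\dim_F K\otimes_{R(W)} F = 0$, so $K\otimes_{R(W)} F = 0$. Since $R(W)=\mathbb{Z}[\mathbf{u},\mathbf{u}^{-1}]$ is an integral domain and $K\subseteq R(W)^N$ is a submodule of a free module, $K$ is torsion-free; a torsion-free module killed after extending scalars to the fraction field must be zero, so $K=0$. Consequently $\varphi$ is an isomorphism and $\mathcal{H}(W)$ is a free $R(W)$-module of rank $N=|W|$.
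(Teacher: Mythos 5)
Your overall architecture is the right one, and it matches the proofs that the paper points to (\cite[proof of Theorem 4.24]{BMR} and \cite[Proposition 2.4]{Mar43}; the paper itself gives no proof): reduce everything to the equality $\dim_F \mathcal{H}(W)\otimes_{R(W)}F=|W|$ for $F=\mathrm{Frac}(R(W))$, then kill the kernel $K$ of $R(W)^{|W|}\twoheadrightarrow\mathcal{H}(W)$ using that $K$ is a torsion-free module over a domain which vanishes after tensoring with $F$. You also correctly diagnose that the only nontrivial point is the lower bound on the generic rank, since semicontinuity and the spanning hypothesis both push in the direction $\dim_F\mathcal{H}(W)\otimes_{R(W)}F\le |W|$. (Your first paragraph, by contrast, is inert: the conclusion that $K\otimes_\theta\mathbb{C}$ maps to zero in $\mathbb{C}^{|W|}$ is never used later, and no argument based solely on that one fibre can suffice --- the module $R/(t-1)$ over $R=\mathbb{Z}[t^{\pm 1}]$ is generated by one element and has a one-dimensional fibre at $t=1$ without being free.)

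The genuine gap is that your decisive step is asserted rather than proved. You say one ``lifts'' each irreducible representation of $W$ to a representation of $\mathcal{H}(W)\otimes_{R(W)}\bar F$ of the same dimension and ``verifies'' that the lifts stay pairwise nonisomorphic. The verification half is indeed routine (specialise along a place of $\bar F$ extending $\theta$ and compare traces), but the \emph{existence} of the lifts is exactly where the entire content of the theorem sits: for a general complex reflection group there is no elementary or uniform way to produce them one irreducible at a time, and asserting that they exist with the correct dimensions over a splitting field is essentially equivalent to the inequality $\dim_{\bar F}\mathcal{H}(W)\otimes_{R(W)}\bar F\ge |W|$ that you are trying to establish. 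The cited proofs obtain this lower bound all at once, not representation by representation: via the monodromy of a Knizhnik--Zamolodchikov-type flat connection one constructs a surjection of algebras $\mathcal{H}(W)\otimes_{R(W)}\mathbb{C}((h))\twoheadrightarrow \mathbb{C}((h))[W]$ for the embedding $R(W)\hookrightarrow\mathbb{C}((h))$ sending $u_{\mathcal{C},j}$ to ${\rm exp}(2\pi i j/e_{\mathcal{C}})\,{\rm exp}(h x_{\mathcal{C},j})$ with the $x_{\mathcal{C},j}$ chosen to make this map injective, whence $\dim_F\mathcal{H}(W)\otimes_{R(W)}F\ge|W|$. So your outline is sound, but it quietly outsources its hardest step to an unproved claim; to close the gap you must either invoke that monodromy result explicitly or supply the explicit matrix models case by case.
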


As mentioned earlier, the BMR freeness conjecture is now a theorem. It was proved for:
\begin{itemize}
       \item the complex reflection groups of the infinite series $G(de,e,n)$ by \cite{ArKo, BM, Ar};\smallbreak
	\item the group $G_4$ by \cite{BM, Fun, Mar41, Ch18} (4 independent proofs); \smallbreak
	\item the group $G_{12}$ by  \cite{MaPf}; \smallbreak
	\item the groups $G_5,\ldots, G_{16}$ by \cite{Ch17, Ch18}; \smallbreak
	\item the groups $G_{17},\,G_{18},\,G_{19}$ by \cite{Tsu} (with a computer method applicable to all rank $2$ groups);\smallbreak
	\item the groups $G_{20}, \,G_{21}$ by \cite{MarNew};\smallbreak
	\item the groups $G_{22},\ldots, G_{37}$ by \cite{Mar41, Mar43, MaPf}. 
\end{itemize}

 \subsection{Good bases for Hecke algebras}\label{good bases}
 A symmetrising trace on an algebra is a trace map that induces a non-degenerate bilinear form. There exists a canonical symmetrising trace on the group algebra of $W$ given by $\tau(w)=\delta_{1w}$. If $W$ is a real reflection group, then there exists a 
 canonical symmetrising on $\mathcal{H}(W)$ given by $\tau(T_w)=\delta_{1w}$  \cite[IV, \S 2]{Bou05}.
 Brou\'e, Malle and Michel  conjectured that all generic Hecke algebras possess a canonical symmetrising trace \cite[2.1, Assumption 2(1)]{BMM}:

\begin{conj}[The BMM symmetrising trace conjecture]
	\label{BMM sym}
	There exists a linear map $\tau: \mathcal{H}(W)\rightarrow R(W)$ such that:
	\begin{itemize}
		\item[$(1)$] $\tau$ is a symmetrising trace, that is, we have $\tau(h_1h_2)=\tau(h_2h_1)$ for all  $h_1,h_2\in \mathcal{H}(W)$, and the bilinear map $\mathcal{H}(W)\times \mathcal{H}(W)\rightarrow R(W)$,  $(h_1,h_2)\mapsto\tau(h_1h_2)$ is non-degenerate. \smallbreak
		\item[$(2)$] $\tau$ becomes the canonical symmetrising trace on $K(W)[W]$ when ${u}_{{\mathcal{C}},j}$ specialises to ${\rm exp}(2\pi i j/e_{\mathcal{C}})$ for every $\mathcal{C}\in \mathcal{A}/W$ and $j=1,\ldots, e_{\mathcal{C}}$. \smallbreak
		\item[$(3)$]  $\tau$ satisfies
		$$
		\tau(T_{b^{-1}})^* =\frac{\tau(T_{b\boldsymbol{\pi}})}{\tau(T_{\boldsymbol{\pi}})}, \quad \text{ for all } b \in B(W),
		$$
		where $b\mapsto T_{b}$ denotes the restriction of the natural surjection $R(W)[B(W)] \rightarrow \mathcal{H}(W)$ to $B(W)$ and
		$x \mapsto x^*$ the automorphism of  $R(W)$ given by $u_{\mathcal{C},j} \mapsto u_{\mathcal{C},j}^{-1}$.
		\smallbreak
	\end{itemize}
\end{conj}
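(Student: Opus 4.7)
The plan is to construct the trace $\tau$ explicitly from a distinguished basis and then verify the three conditions one at a time, exactly along the lines of the strategy sketched in the introduction. First I would produce a \emph{good} $R(W)$-basis $\mathcal{B}(W)$ of $\mathcal{H}(W)$ containing $1$ and specialising to the set $W$ under $u_{\mathcal{C},j}\mapsto \exp(2\pi i j/e_{\mathcal{C}})$; the existence of such a basis is a strengthening of the BMR freeness conjecture (Conjecture \ref{BMR free}), and for the rank $2$ exceptional groups it can be read off from the bases constructed by Etingof--Rains style surjections (cf.\ the discussion preceding Section \ref{gap3section}). Once $\mathcal{B}(W)$ is in hand, define $\tau: \mathcal{H}(W)\to R(W)$ to be the unique $R(W)$-linear map with $\tau(b)=\delta_{1,b}$ for $b\in\mathcal{B}(W)$. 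Condition $(2)$ is then automatic, since under the specialisation to $K(W)[W]$ the basis $\mathcal{B}(W)$ maps bijectively onto $W$ and $\tau$ pulls back exactly to $w\mapsto\delta_{1,w}$.

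Next I would tackle the trace property in $(1)$. By $R(W)$-bilinearity it suffices to verify $\tau(bb')=\tau(b'b)$ for $b,b'\in\mathcal{B}(W)$, and a further reduction (which is standard once $\mathcal{B}(W)$ contains the generators of $\mathcal{H}(W)$) reduces this to the case where $b=s$ is one of the Hecke generators. Here is where the GAP3 machinery of Section \ref{gap3section} becomes indispensable: it rewrites every product $sb'$ and $b's$ as an explicit $R(W)$-linear combination of elements of $\mathcal{B}(W)$, and one simply compares the coefficient of $1$ in the two expansions. For the non-degeneracy part of $(1)$, I would assemble the Gram matrix $G=(\tau(bb'))_{b,b'\in\mathcal{B}(W)}$ and show that $\det(G)\in R(W)^{\times}$. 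Since specialising the $u_{\mathcal{C},j}$ sends $G$ to the Gram matrix of the canonical symmetrising trace on $K(W)[W]$, which has determinant $\pm 1$ (up to a unit), it is enough to argue that $\det(G)$ is a Laurent monomial in the $u_{\mathcal{C},j}$, which again is a finite bookkeeping check using the same program.

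Condition $(3)$, the Markov-type identity involving the full twist $\boldsymbol{\pi}$, is what I expect to be the main obstacle. It is a global statement not about a single basis element but about the whole braid group: one has to compute $\tau(T_{b\boldsymbol{\pi}})$ for every $b\in B(W)$ and match it with the Galois-twisted value $\tau(T_{b^{-1}})^{*}\tau(T_{\boldsymbol{\pi}})$. In practice one writes $\boldsymbol{\pi}$ as an explicit positive word in the generators of $B(W)$ --- possible because $\boldsymbol{\pi}$ lifts a central element of $W$ and its length is controlled by $|Z(W)|$ and the Coxeter-like presentation --- and reduces the verification to checking the identity on the generators $b$ of a basis of $\mathcal{H}(W)$ via the trace and cyclicity properties already established. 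The difficulties are twofold: the word length of $T_{\boldsymbol{\pi}}$ in the generators grows quickly with $|W|$, making the symbolic expansion large even for rank $2$ groups; and the automorphism $x\mapsto x^{*}$ permutes the eigenvalue indeterminates in a way that interacts non-trivially with the positive Hecke relations of Section 2.4, so the comparison at the end must be done after a careful Galois-equivariant normalisation. This is precisely the step that forced the case-by-case computational treatment in \cite{BCCK} and \cite{BCC}, and I would approach an unproven case in the same spirit: set up $\tau$ via a good basis, reduce the cyclicity and non-degeneracy to a finite check using the GAP3 algorithm, and then spend the bulk of the effort producing a tractable word for $\boldsymbol{\pi}$ in order to make condition $(3)$ computable.
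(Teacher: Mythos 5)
The statement you were asked to prove is Conjecture \ref{BMM sym}, and the paper does not prove it: it is an open conjecture for most complex reflection groups. The paper only records the cases in which it is known (by \cite{MM10}, \cite{Mar46}, \cite{BCCK}, \cite{BCC}) and proves the special case of cyclic groups in Proposition \ref{l1}. What you have written is therefore not a proof but a programme, and it is essentially the programme the authors themselves describe in \S\ref{good bases} and \S\ref{gap3section} and carried out group by group in \cite{BCCK} and \cite{BCC}: choose a good basis $\mathcal{B}(W)$ satisfying (i)--(iii), define $\tau(b)=\delta_{1b}$, obtain Condition (2) for free, and verify Conditions (1) and (3) by finite computation. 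Even if every finite check succeeds, this establishes the conjecture only one group at a time and cannot yield the general statement.

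Two technical points in your plan would also not go through as written. First, for Condition (3) you propose to ``check the identity on the generators $b$ of a basis via the trace and cyclicity properties''; the identity in (3) is quantified over all $b\in B(W)$ and does not reduce to generators by cyclicity alone. The reduction that actually works is \cite[Proposition 2.7]{MM10}, quoted as \eqref{extra con2} in the paper: if the section $\boldsymbol{W}$ satisfies the lifting conjecture (Conjecture \ref{lif}) and the elements $T_{\boldsymbol{w}}$ form an $R(W)$-basis, then Condition (3) is equivalent to the finite list of identities $\tau(T_{\boldsymbol{w}^{-1}\boldsymbol{\pi}})=0$ for $\boldsymbol{w}\in\boldsymbol{W}\setminus\{1\}$. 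Second, your non-degeneracy argument is circular: a Laurent polynomial over $\mathbb{Z}$ that specialises to $\pm 1$ need not be a unit (consider $2-u$ at $u=1$), and the units of $R(W)$ are exactly $\pm$ the monomials, so ``arguing that $\det(G)$ is a Laurent monomial'' is the whole content of the claim rather than a reduction of it; in practice one simply computes $\det(G)$ explicitly and checks that it is $\pm$ a monomial, which is what the authors do (see the Gram matrix computation in Proposition \ref{l1} for the cyclic case).
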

Note that, by  \cite[2.1]{BMM}, since the BMR freeness conjecture holds,  there exists at most
one symmetrising trace satisfying Conditions (2) and (3) of Conjecture \ref{BMM sym}, meaning that $\tau$ is unique. We call $\tau$ the \emph{canonical symmetrising trace on  $\mathcal{H}(W)$}.

The BMM symmetrising trace conjecture has been proved for the following non-real complex reflection groups:
\begin{itemize}
         \item  the groups  $G(de,e,n)$ by \cite{BreMa, MM98} (with Condition (3) deriving from \cite[Lemma 2.7]{BMM});\smallbreak
	\item the group $G_4$ by \cite{MM10, Mar46, BCCK} (3 independent proofs); \smallbreak
	\item the groups $G_5, G_6, G_7, G_8$ by \cite{BCCK}; \smallbreak
	\item the group $G_{12}$ by \cite{MM10};  \smallbreak
	\item the group $G_{13}$ by \cite{BCC}; \smallbreak
	\item the groups $G_{22}, G_{24}$ by \cite{MM10}.
\end{itemize}	
In all the cases above, a \emph{good} basis $\mathcal{B}(W)$ for $\mathcal{H}(W)$ was considered, so that:
\begin{itemize}
	\item[(i)] $1 \in \mathcal{B}(W)$; \smallbreak
	\item[(ii)] $\mathcal{B}(W)$ specialises to $W$  when ${u}_{\mathcal{C},j}$ specialises to ${\rm exp}(2\pi i j/e_{\mathcal{C}})$
	for all $\mathcal{C}\in \mathcal{A}/W$ and $j=1,\ldots, e_{\mathcal{C}}$; \smallbreak
	\item[(iii)]  $\tau(b)=\delta_{1b}$ for all $b \in \mathcal{B}(W)$.
\end{itemize}	
This way, 	Condition (2) of Conjecture \ref{BMM sym} is satisfied, and only Conditions (1) and (3) have to be verified. 
In fact, Malle and Michel have conjectured that there is always a subset of $\mathcal{H}(W)$ (not necessarily a basis) that satisfies properties (i)--(iii) \cite[Conjecture 2.6]{MM10}:

\begin{conj}[The lifting conjecture]\label{lif}
	There exists a section $W \rightarrow \boldsymbol{W} \subset B(W)$, $w \mapsto \boldsymbol{w}$ of $W$ in $B(W)$ such that $1 \in \boldsymbol{W}$, and such that for any $\boldsymbol{w} \in \boldsymbol{W}$ we have $\tau(T_{\boldsymbol{w}}) = \delta_{1\boldsymbol{w}}$.
\end{conj}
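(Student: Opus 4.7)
The plan is to deduce the lifting conjecture from the existence of a good basis for $\mathcal{H}(W)$ in the sense of Section \ref{good bases}. First I would observe that whenever such a basis $\mathcal{B}(W)$ is available, the conjecture follows at once: each element of $\mathcal{B}(W)$ is, by construction, a word in the standard generators of $\mathcal{H}(W)$, which are themselves images of braided reflections under the natural surjection $R(W)[B(W)] \twoheadrightarrow \mathcal{H}(W)$. Lifting each such word to the corresponding product of braided reflections in $B(W)$ produces a subset $\boldsymbol{W} \subset B(W)$ of cardinality $|W|$. Property (ii) of the good basis ensures that the composition $\boldsymbol{W} \hookrightarrow B(W) \twoheadrightarrow W$ is a bijection, so $\boldsymbol{W}$ is a set-theoretic section of $W$ in $B(W)$; property (i) yields $1 \in \boldsymbol{W}$; and property (iii) gives $\tau(T_{\boldsymbol{w}}) = \delta_{1\boldsymbol{w}}$ for all $\boldsymbol{w} \in \boldsymbol{W}$.

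Applying this observation, the lifting conjecture holds for every real reflection group via the Iwahori basis $(T_w)_{w \in W}$, and in the complex case for each of the groups $G_4, G_5, G_6, G_7, G_8, G_{12}, G_{13}, G_{22}$ and $G_{24}$, where good bases have already been constructed in the works cited in Section \ref{good bases}. For the remaining exceptional groups of rank $2$ whose BMR freeness conjecture has been settled by an explicit spanning set, my next step would be to feed the candidate basis into the GAP3 program of Section \ref{gap3section} in order to check conditions (i)--(iii) directly; this of course requires a candidate symmetrising trace, which one can try to obtain by specialising or interpolating traces from related groups.

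The main obstacle is the infinite series $G(de,e,n)$, where a symmetrising trace satisfying conditions (1) and (2) of Conjecture \ref{BMM sym} is known but condition (3) remains open. For these groups, property (iii) of a good basis cannot even be meaningfully stated until the canonical trace is pinned down. A natural attack is to combine the Ariki--Koike combinatorics with an inductive lifting across the tower $G(de,e,n-1) \subset G(de,e,n)$, reducing the verification to small ranks that can be handled computationally. The parabolic freeness and trace results of the present paper already supply a template for such an induction: one can lift basis elements of a parabolic subalgebra to $B(W)$ and extend them through a transversal designed to preserve the trace condition, so that the section $\boldsymbol{W}$ is assembled layer by layer rather than exhibited all at once.
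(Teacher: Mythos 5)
The statement you are trying to prove is not a theorem of the paper: it is Malle--Michel's lifting conjecture, quoted as \cite[Conjecture 2.6]{MM10}, and the paper offers no proof of it in general --- indeed no proof is currently known. Your opening observation is correct and is exactly the remark the paper itself makes in \S\ref{good bases}: whenever a \emph{good} basis $\mathcal{B}(W)$ exists (conditions (i)--(iii)), its elements are images of words in braided reflections, so lifting them to $B(W)$ produces a section $\boldsymbol{W}$ with $1\in\boldsymbol{W}$ and $\tau(T_{\boldsymbol{w}})=\delta_{1\boldsymbol{w}}$; this settles the conjecture for real reflection groups and for $G_4,\dots,G_8$, $G_{12}$, $G_{13}$, $G_{22}$, $G_{24}$. (One caveat even here: condition (ii) only guarantees that $\mathcal{B}(W)$ specialises to the \emph{set} $W$; to conclude that $\boldsymbol{W}\to W$ is a bijection you also need that distinct basis elements specialise to distinct group elements, which holds because a basis of the specialised algebra $K(W)[W]$ of cardinality $|W|$ contained in $W$ must be all of $W$.)

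The gap is everything beyond those known cases. For the remaining groups the quantity $\tau(T_{\boldsymbol{w}})$ is not even well defined, because the canonical symmetrising trace $\tau$ is itself only conjectural: for the infinite series $G(de,e,n)$ a trace satisfying Conditions (1) and (2) of Conjecture \ref{BMM sym} is known but Condition (3) is open, and for most exceptional groups ($G_9$--$G_{11}$, $G_{14}$--$G_{21}$, and the non-real groups of rank $\geq 3$ other than $G_{24}$) no candidate for $\tau$ has been verified at all. Your proposed remedies --- running the GAP3 program of \S\ref{gap3section} on candidate bases, or an induction along the tower $G(de,e,n-1)\subset G(de,e,n)$ using parabolic transversals --- are research directions, not arguments: the GAP3 program of this paper only works for the six groups whose good bases were already computed in earlier work, and the inductive lifting across the tower presupposes precisely the parabolic trace compatibility (Conjecture \ref{conjp2}) and the existence of $\tau$ on both levels, neither of which is available for the infinite series. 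So what you have written verifies the conjecture in the cases where the paper already knows it and outlines a programme for the rest; it does not prove the statement.
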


If the lifting conjecture \ref{lif} holds, then Condition (2) of Conjecture \ref{BMM sym} is obviously satisfied. If further the elements $\{T_{\boldsymbol{w}}\,|\,\boldsymbol{w} \in \boldsymbol{W}\}$ form an
$R(W)$-basis of $\mathcal{H}(W)$, then, by \cite[Proposition 2.7]{MM10}, Condition (3) of Conjecture \ref{BMM sym} is equivalent to:
\begin{equation}\label{extra con2}
\tau(T_{\boldsymbol{w}^{-1}\boldsymbol{\pi}})=0, \quad \text{ for all } \boldsymbol{w} \in \boldsymbol{W} \setminus \{1\}.
\end{equation}

\subsection{More bases for Hecke algebras}\label{gap3section} From now on, we will write $b_1,b_2,\ldots,b_{|W|}$ for the elements of $\mathcal{B}(W)$, with $b_1=1$.
Checking Condition (1) of the BMM symmetrising trace conjecture amounts to showing that the Gram matrix $A(W):=(\tau(b_ib_j))_{1 \leq i,j \leq |W|}$ is symmetric and invertible over $R(W)$. By definition of $\tau$, $\tau(b_ib_j)$ is the coefficient  of $1$ when $b_ib_j$ is expressed as a linear combination of the elements of the basis $\mathcal{B}(W)$. This is why, in \cite{BCCK}, we created a program in the language \texttt{C++} which would write any product $b_ib_j$ as a linear combination of the elements of $\mathcal{B}(W)$. With the exception of $G_4$, this program was very time-consuming. Taking advantage of the fact that our basis $\mathcal{B}(W)$ has an inductive nature, we came up with an elaborate algorithm that calculates the entries of the matrix $A(W)$ row-by-row. For this, we only needed to use the  \texttt{C++} program to express $b_ib_j$ as a linear combination of the elements of the basis when $b_i \in \mathcal{B}(W)$ is a generator of $\mathcal{H}(W)$.
We then created a second program in SAGE \cite{sagemath}, which, using as inputs the outputs of  the \texttt{C++} program, produced very quickly the matrix $A(W)$ and its determinant. We were thus able to prove the BMM symmetrising trace conjecture for groups $G_4$, $G_5$, $G_6$, $G_7$ and $G_8$. In \cite{BCC}, we adapted this method to obtain the validity of the BMM symmetrising trace conjecture for the group $G_{13}$, thus completing its proof for all exceptional $2$-reflection groups of rank $2$ (the others being $G_{12}$ and $G_{22}$).

For the purposes of the current article and for future works, we have now created a GAP3 program, which can be found on \cite{web}, that can be used to express any element of the generic Hecke algebra $\mathcal{H}(W)$ as a linear combination of the elements of the basis $\mathcal{B}(W)$ for the groups that we studied in \cite{BCCK} and \cite{BCC}. 
More specifically, given that any element of $\mathcal{H}(W)$  is a linear combination of words in $\mathcal{H}(W)$, our program uses the outputs of the  \texttt{C++} program to express any word as a linear combination of the elements of  $\mathcal{B}(W)$. 

The algorithm of our GAP3 program is simple and follows the steps of the proof of the following result.

\begin{lem}\label{gap-al}
Let $\mathcal{S}$ be a subset of the generators of $\mathcal{H}(W)$ and assume that one can express $sb_j$ as a linear combination of elements of $\mathcal{B}(W)$ for all $s \in \mathcal{S}$ and for all $b_j \in \mathcal{B}(W)$. 
Let $h$ be a word in the generators in $\mathcal{S}$, that is, a product of elements of $\mathcal{S}$ or their inverses.
Then one can express $hb_j$ as a linear combination of elements of $\mathcal{B}(W)$ for all $b_j \in \mathcal{B}(W)$. In particular, one can  express $h$ as a linear combination of elements of $\mathcal{B}(W)$.
\end{lem}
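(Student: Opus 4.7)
The plan is to prove the lemma by induction on the length $\ell(h)$ of the word $h$, defined as the total number of letters appearing in $h$ when written as a product of elements of $\mathcal{S}$ and their inverses. The base case $\ell(h)=0$ is trivial: $h=1$ and $hb_j = b_j \in \mathcal{B}(W)$. For the inductive step, I would write $h = s^{\varepsilon}\, h'$ with $s \in \mathcal{S}$, $\varepsilon \in \{+1,-1\}$ and $\ell(h') = \ell(h)-1$, so that by the induction hypothesis one already has an explicit expression
$$h'\, b_j \;=\; \sum_{k=1}^{|W|} c_k\, b_k, \qquad c_k \in R(W).$$
Then $hb_j = \sum_k c_k\, (s^{\varepsilon} b_k)$, and it remains to re-express each $s^{\varepsilon} b_k$ as a linear combination of elements of $\mathcal{B}(W)$.

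When $\varepsilon = +1$, this is exactly the hypothesis of the lemma, so nothing more needs to be done. The only genuine content lies in handling the case $\varepsilon = -1$. For this I would invoke the inverse Hecke relation \eqref{invhecke}, which expresses $s^{-1}$ as the $R(W)$-linear combination
$$s^{-1} \;=\; a_{\mathcal{C},0}^{-1}\, s^{e_{\mathcal{C}}-1} - a_{\mathcal{C},0}^{-1} a_{\mathcal{C},e_{\mathcal{C}}-1}\, s^{e_{\mathcal{C}}-2} - \cdots - a_{\mathcal{C},0}^{-1} a_{\mathcal{C},1},$$
where $\mathcal{C}$ is the orbit of the reflecting hyperplane of (the image of) $s$. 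Substituting, $s^{-1} b_k$ becomes a fixed $R(W)$-linear combination of the elements $s^m b_k$ for $0 \le m \le e_{\mathcal{C}}-1$. Each $s^m b_k$ is in turn obtained by iterating the assumed rule $s \cdot b_? = \sum (\cdots) b_?$ exactly $m$ times: starting from $b_k$, multiply on the left by $s$, expand in the basis using the assumption, multiply again by $s$, expand again, and so on. This is a finite, effective procedure because $e_{\mathcal{C}}$ is finite and each step lands us back inside the $R(W)$-span of $\mathcal{B}(W)$.

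The only point that requires a little care — and what I would flag as the main (mild) obstacle — is the bookkeeping needed to see that the above procedure terminates and gives a legitimate $R(W)$-linear combination, rather than an infinite series: termination is guaranteed by the finiteness of $e_{\mathcal{C}}$ together with the finiteness of $\mathcal{B}(W)$, and linearity in $R(W)$ is preserved at every step because the assumed expansion $s\, b_k = \sum_\ell \lambda_\ell^{(s,k)} b_\ell$ has coefficients $\lambda_\ell^{(s,k)} \in R(W)$ and the coefficients in \eqref{invhecke} likewise belong to $R(W)$. Combining the inductive step with the treatment of $s^{\pm 1} b_k$ just described yields an explicit expression of $hb_j$ as an element of the $R(W)$-span of $\mathcal{B}(W)$.

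Finally, for the ``In particular'' clause, take $b_j = b_1 = 1$, using condition (i) of Section~\ref{good bases}, so that $h = h \cdot 1 = h\, b_1$ is expressible as a linear combination of the elements of $\mathcal{B}(W)$. This is exactly the form of output that the GAP3 program is designed to produce: its input is an arbitrary word $h$ in the generators (and their inverses), and its output is the vector of coefficients of $h$ in the basis $\mathcal{B}(W)$, obtained by recursively applying the two rules above — ``multiply by a generator and expand'' (given as tabulated data from the \texttt{C++} program) and ``rewrite $s^{-1}$ via \eqref{invhecke}'' — until no letters remain.
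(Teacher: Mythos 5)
Your proof is correct and follows essentially the same strategy as the paper's: induction on the word length, using the tabulated expansions of $sb_k$ for positive letters and the inverse Hecke relations \eqref{invhecke} to reduce inverses to nonnegative powers. The only (immaterial) organisational difference is that the paper first treats positive words and then rewrites an arbitrary word as a linear combination of positive words by applying \eqref{invhecke} globally, whereas you apply \eqref{invhecke} locally at each inductive step; both yield the same finite, effective procedure.
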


\begin{proof}
Let $s \in \mathcal{S}$ and $b_j \in \mathcal{B}(W)$. We will denote by $\lambda^s_{j,k}$ the coefficient of $b_k$ in $R(W)$ when $sb_j$ is written as a linear combination of  elements of $\mathcal{B}(W)$, for $k=1,\ldots,|W|$. That is, we have
 $$sb_j = \sum_{k=1}^{|W|} \lambda^s_{j,k}b_k.$$

First, let $h$ be a positive word in $\mathcal{S}$, that is, a product of elements of $\mathcal{S}$. We will prove the desired result by induction on the length $\ell(h)$ of $h$, that is, the number of factors in this product.

If $\ell(h)=1$, then $h=s$ for some $s \in \mathcal{S}$, and we have $hb_j= \sum_{k=1}^{|W|} \lambda^s_{j,k}b_k$.
Now assume that the statement is true for all positive words of length $n$ and let
$\ell(h)=n+1$. Then $h=sh'$ for some $s \in \mathcal{S}$ and some positive word $h'$ in $\mathcal{S}$ with $\ell(h')=n$. 
By induction hypothesis, we know how to express $h'b_j$ as a linear combination of elements of $\mathcal{B}(W)$.
Hence, if $h' b_j= \sum_{k=1}^{|W|} \mu_{j,k} b_k$, with $\mu_{j,k} \in R(W)$, then
$$hb_j = sh'b_j = s \left(\sum_{k=1}^{|W|} \mu_{j,k} b_k \right) = 
\sum_{k=1}^{|W|} \mu_{j,k} (sb_k) = \sum_{k=1}^{|W|} \mu_{j,k} \left( \sum_{l=1}^{|W|} \lambda^s_{k,l}b_l \right) = \sum_{k=1}^{|W|}  \sum_{l=1}^{|W|} \mu_{j,k}  \lambda^s_{k,l}b_l.$$ 

If now $h$ is any word in $\mathcal{S}$, then we can use the inverse Hecke relations \eqref{invhecke} to write $h$ as a linear combination of positive words, then apply the above algorithm to each positive word to obtain $hb_j$ as a linear combination of elements of $\mathcal{B}(W)$.
\end{proof}

Now, let $W \in \{G_4,G_5,G_6,G_7,G_8,G_{13}\}$, and let $\mathcal{S}$ be the set of all generators of $\mathcal{H}(W)$ that belong to $\mathcal{B}(W)$. Given the outputs of our   \texttt{C++} program, Lemma \ref{gap-al} implies that we can express any product $hb_j$ as a linear  combination of elements of $\mathcal{B}(W)$, where $h$ is a word in $\mathcal{S}$ and $b_j \in \mathcal{B}(W)$.
For all groups except for $G_7$, the set $\mathcal{S}$ contains all generators of $\mathcal{H}(W)$, and thus we are able to express any word in $\mathcal{H}(W)$ as
a  linear  combination of elements of $\mathcal{B}(W)$. However, 
for group $G_7$, only two out of three generators of $\mathcal{H}(W)$ belong to $\mathcal{B}(W)$. Let $s$ be the generator that does not belong to $\mathcal{S}$, and let $j \in \{1,\ldots, |G_7|\}$. In \cite{BCCK}, right before Proposition 5.7, we explain how we can write $sb_j$ as a linear combination of elements of the form $hb_k$, where $h$ is a word in $\mathcal{S}$. Following the discussion above, we can thus express $sb_j$ as a linear combination of elements of $\mathcal{B}(W)$. Now, applying Lemma \ref{gap-al} to the set of all generators of $\mathcal{H}(G_7)$ allows us to express any word in $\mathcal{H}(G_7)$ as
a  linear  combination of elements of $\mathcal{B}(G_7)$.

The existence of our algorithm yields an alternative proof for the fact that $\mathcal{B}(W)$ is a spanning set for $\mathcal{H}(W)$ as an $R(W)$-module. By Theorem \ref{mention}, this implies that $\mathcal{B}(W)$ is a basis of $\mathcal{H}(W)$ over $R(W)$. As we remarked in both \cite{BCCK} and \cite{BCC}, one could argue that we have thus obtained a computerised proof of the BMR freeness conjecture for these groupes. This is indeed the case for $G_4$. However, many of the calculations 
made by hand in \cite{Ch17} for the proof of the BMR freeness conjecture were incorporated in the \texttt{C++} program.

Moreover, thanks to our GAP3 algorithm, we can now easily check  whether any subset of $\mathcal{H}(W)$ consisting of $|W|$ elements is a basis of $\mathcal{H}(W)$ as an $R(W)$-module. Let $\{b_1',\ldots,b_{|W|}'\}$  be such a set. Applying our algorithm, for all $j=1,\ldots,|W|$, we write $$b_j' = \sum_{i=1}^{|W|} \mu_{ij} b_i$$ with $\mu_{ij} \in R(W)$. We compute the determinant of the matrix $M:=(\mu_{ij})_{1\leq i,j \leq |W|}$ and if it is a unit in $R(W)$, then $M$ is a change of basis matrix and the set $\{b_1',\ldots,b_{|W|}'\}$  is a basis of $\mathcal{H}(W)$. We have used this method to prove the results in the last section of this article.

\section{The trace and freeness conjectures for parabolic Hecke subalgebras}

In this section, we will present two further conjectures, which are fundamental as far as the structure of parabolic Hecke subalgebras is concerned. They are both essential in order to be able to smoothly use induction and restriction arguments.
We will call them the ``parabolic freeness conjecture'' and the ``parabolic (symmetrising) trace conjecture'' because they have obvious similarities with the freeness and trace conjectures on the Hecke algebra level. Our objective in this paper was to prove both conjectures for the exceptional groups of rank $2$ that we studied in \cite{BCCK} and \cite{BCC}. Since we are able to provide a short uniform proof of the parabolic trace conjecture (which also works for groups $G_{12}$ and $G_{22}$), while we use a much longer case-by-case analysis for the parabolic freeness conjecture, we will present the two conjectures in, what is in fact, their chronological order.

From now on, let $W\subset\mathrm{GL}(V)$ be a complex reflection group and let $\mathcal{H}(W)$ be the generic Hecke algebra of $W$.   Let $I$ be an intersection of reflecting hyperplanes of $W$ and let $W_I$ be the corresponding parabolic subgroup of $W$. Let $\mathcal{H}_I(W)$ be the parabolic Hecke subalgebra of $\mathcal{H}(W)$ associated with $I$.

\subsection{The parabolic trace conjecture} \label{parabtra}
Let us  assume that the BMM symmetrising trace conjecture holds for the algebras $\mathcal{H}(W)$ and $\mathcal{H}(W_I)$. Then the algebra $\mathcal{H}_I(W)$ admits also a canonical symmetrising trace, which is obtained from the one of $\mathcal{H}(W_I)$ by specialisation of scalars (via the map $\varphi$, as in \eqref{phitensor}). The following conjecture was stated by Brou\'e, Malle and Michel together with the BMM symmetrising trace conjecture \cite[2.1, Assumption $2(2)(c)$]{BMM}:

\begin{conj}[The parabolic symmetrising trace conjecture]\label{conjp2}
	Let $\tau$ be the canonical symmetrising trace on $\mathcal{H}(W)$. The restriction $\tau|_{\mathcal{H}_I(W)}$ is the canonical symmetrising trace on $\mathcal{H}_I(W)$.
	\end{conj}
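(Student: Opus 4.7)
The plan is to exploit two features specific to rank $2$: every non-trivial proper parabolic subgroup $W_I$ is a cyclic reflection group, and (by Proposition \ref{conjtogen}) any such $W_I$ is $W$-conjugate to $\langle s\rangle$ for some generating distinguished pseudo-reflection $s$ of $W$. Since the canonical symmetrising trace $\tau_W$ is conjugation-invariant, and since the embedding $\mathcal{H}(W_I)\otimes_\varphi R(W) \hookrightarrow \mathcal{H}(W)$ is defined only up to $P(W)$-conjugation, it suffices to prove the statement when $\mathcal{H}_I(W)$ is the subalgebra generated by a single generator $s$ of $\mathcal{H}(W)$. The cases $W_I=\{1\}$ and $W_I=W$ are immediate.

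The first step is to establish the BMM symmetrising trace conjecture for cyclic reflection groups, so that ``the canonical symmetrising trace on $\mathcal{H}(W_I)$'' is actually well-defined. For $W'=\langle s\rangle$ of order $e$, the algebra $\mathcal{H}(W')$ is commutative with $R(W')$-basis $\{1,s,\ldots,s^{e-1}\}$, so every linear map is a trace. I would define $\tau_0$ by $\tau_0(s^j):=\delta_{0j}$ for $0\leq j\leq e-1$ and then verify the three conditions of Conjecture~\ref{BMM sym}: $(1)$ by computing the Gram matrix using the positive and inverse Hecke relations \eqref{Heckerp} and \eqref{invhecke}, and checking that its determinant is a unit in $R(W')$; $(2)$ by observing that under the specialisation $u_j\mapsto\exp(2\pi i j/e)$ the defining formula reduces at once to the canonical trace on $K(W')[W']$; $(3)$ by expanding the Markov-type identity using that $B(W')\cong\mathbb{Z}$ is generated by $s$ and that $\boldsymbol{\pi}=s^e$ generates $Z(P(W'))$, so both sides can be written out explicitly in the basis $\{1,s,\ldots,s^{e-1}\}$.

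Granted Step~1, because a linear map on $\mathcal{H}_I(W)$ is determined by its values on $\{1,s,\ldots,s^{e-1}\}$, the statement reduces to the identity
\begin{equation}\label{keyeq}
\tau_W(s^j)=\delta_{0j}, \qquad 0\leq j\leq e-1.
\end{equation}
For $W\in\{G_4,G_5,G_6,G_7,G_8,G_{13}\}$, the good parabolic basis $\mathcal{B}_s^l=\{s^j b\,|\,0\leq j\leq e-1,\,b\in\mathfrak{B}_s^l\}$ constructed in Section~\ref{Finding parabolic bases} satisfies property (i), so $1\in\mathcal{B}_s^l$ forces $1\in\mathfrak{B}_s^l$; hence each $s^j$ with $0\leq j\leq e-1$ actually lies in $\mathcal{B}_s^l$. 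Property (iii) then yields $\tau_W(s^j)=\delta_{1,s^j}=\delta_{0j}$, as required. For $G_{12}$ and $G_{22}$ all generators $s$ have order $2$, so only $\tau_W(s)=0$ is needed; this is read off from the good bases used in \cite{MM10}, which contain each braided reflection $s$.

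To conclude, by the parabolic freeness conjecture (Theorem~1) the set $\{1,s,\ldots,s^{e-1}\}$ is an $R(W)$-basis of $\mathcal{H}_I(W)$, and by Step~1 the canonical trace $\tau_0$ on $\mathcal{H}(W_I)$ pulled back via $\varphi$ takes the value $\delta_{0j}$ on $s^j$. By \eqref{keyeq} the restriction $\tau_W|_{\mathcal{H}_I(W)}$ agrees with it on this basis, so the two linear maps coincide. The main technical obstacle is Step~1, and inside it the Markov identity of condition~(3): this requires expressing $\tau_0$ on negative powers of $s$ via \eqref{invhecke}, expanding $\tau_0(s^{i+e})$ via \eqref{Heckerp}, and checking the resulting polynomial identity in $R(W')$ coefficient by coefficient; the reduction to a genuinely cyclic computation is what allows it to be done uniformly for all $e$.
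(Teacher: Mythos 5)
Your overall strategy is the paper's: first establish the BMM symmetrising trace conjecture for cyclic groups (Proposition \ref{l1}), then reduce the parabolic statement to the single identity $\tau(s^j)=\delta_{0j}$ for $0\leq j\leq e-1$ on one generator per conjugacy class (Corollary \ref{l3} plus Proposition \ref{conjtogen} and conjugation-invariance of $\tau$), and finally read that identity off from a good basis. The one place where you make life substantially harder than necessary is Condition (3) of Conjecture \ref{BMM sym} for the cyclic algebra, which you propose to verify by expanding the Markov-type identity $\tau(T_{b^{-1}})^*=\tau(T_{b\boldsymbol{\pi}})/\tau(T_{\boldsymbol{\pi}})$ for all $b=s^k$, $k\in\Z$, via \eqref{Heckerp} and \eqref{invhecke} ``coefficient by coefficient''; you correctly flag this as the main technical obstacle but leave it unexecuted, and carrying it out for general $k$ and $e$ requires nontrivial symmetric-function identities. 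The paper sidesteps this entirely: since the basis $\{1,s,\ldots,s^{d-1}\}$ satisfies the lifting conjecture (Conjecture \ref{lif}), \cite[Proposition 2.7]{MM10} replaces Condition (3) by the equivalent Condition \eqref{extra con2}, which here reads $\tau(s^{d-j})=0$ for $j=1,\ldots,d-1$ and is immediate from the definition of $\tau$. A second, harmless difference: you extract $\tau_W(s^j)=\delta_{0j}$ from the good \emph{parabolic} bases of Section \ref{Finding parabolic bases}, whereas the paper reads it off the good bases $\mathcal{B}(W)$ of \cite{BCCK,BCC,MM10}, with a separate one-line argument for the third generator of $G_7$ (which does not lie in $\mathcal{B}(G_7)$ but is a combination of elements of $\mathcal{B}(G_7)\setminus\{1\}$); your route handles that generator uniformly, at the cost of relying on the goodness of the parabolic bases, which is itself verified computationally. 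Also note that the $R(W)$-freeness of $\mathcal{H}_I(W)$ on $\{1,s,\ldots,s^{e-1}\}$ comes from the base change \eqref{phitensor} applied to the cyclic case of the BMR freeness (as in Proposition \ref{l1}), not from the parabolic freeness theorem, though the latter does imply the linear independence you need.
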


Conjecture \ref{conjp2} is obviously true for real reflection groups, because of the way the map $\tau$ is defined with the use of the standard basis $(T_w)_{w \in W}$. However, not much is known in the complex case. 

Let us consider the case where $I=H \in \mathcal{A}$. Then, as we have seen in Example \ref{minimal parabolic}, $W_H$ is cyclic and, thus, a minimal non-trivial parabolic subgroup of $W$. On the other hand, any cyclic parabolic subgroup of $W$ must be a pointwise stabiliser of a reflecting hyperplane. 
Every complex reflection group has cyclic parabolic subgroups, one for each reflecting hyperplane. In particular, if $W$ is a complex reflection group of rank $2$, then all its non-trivial proper parabolic subgroups are of rank $1$, and thus cyclic.

In order to prove Conjecture \ref{conjp2}  for complex reflection groups of rank $2$, we will need to explicitly define the canonical symmetrising trace on generic Hecke algebras of cyclic groups (in the Shephard--Todd classification, these correspond to the complex reflection groups $G(d,1,1)$).
For the sake of completeness and for the reader's convenience, we will first show that the BMM symmetrising trace conjecture holds for cyclic groups, verifying directly all three conditions of the conjecture.

\begin{prop} \label{l1}
	The BMM symmetrising trace conjecture holds for cyclic groups.
	\end{prop}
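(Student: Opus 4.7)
The plan is to construct the canonical symmetrising trace $\tau$ on $\mathcal{H}(W)$ explicitly and verify the three conditions of Conjecture \ref{BMM sym}. Write $W$ as the cyclic group of order $d$ generated by a distinguished pseudo-reflection; then $B(W)$ is infinite cyclic, generated by the associated braided reflection $\mathbf{s}$, and $\boldsymbol{\pi}=\mathbf{s}^d$. Since $W$ has a single orbit of reflecting hyperplanes, I simplify notation by writing the indeterminates as $u_1,\ldots,u_d$. The Hecke algebra is then $R(W)[s]/\prod_{j=1}^d(s-u_j)$, a free $R(W)$-module with basis $\{1,s,\ldots,s^{d-1}\}$. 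I would define $\tau$ by $\tau(s^j)=\delta_{j,0}$ for $0\le j\le d-1$ and extend $R(W)$-linearly.

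The easy conditions are (2) and the trace and non-degeneracy parts of (1). Condition (2) is immediate from the specialisation $u_j\mapsto\exp(2\pi ij/d)$, under which $s$ becomes a generator of $W$ and $\tau$ becomes the canonical Kronecker trace on $K(W)[W]$. The trace property in (1) is automatic because $\mathcal{H}(W)$ is commutative. For non-degeneracy, I would use the Hecke relation to get $\tau(s^k)=0$ for $0<k<d$ and $\tau(s^d)=a_0:=(-1)^{d-1}u_1\cdots u_d$, and observe that the Gram matrix $(\tau(s^{i+j}))_{0\le i,j\le d-1}$ then decomposes as a $1\times 1$ block (with entry $\tau(1)=1$) together with a $(d-1)\times(d-1)$ anti-triangular block carrying $a_0$ along its antidiagonal; the determinant of this Gram matrix is $\pm a_0^{\,d-1}\in R(W)^{\times}$.

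The main obstacle is Condition (3). Since $B(W)$ is infinite cyclic, the condition reduces to the family of identities
\[\tau(s^{-k})^{*}=\tau(s^{k+d})/a_0\qquad(k\in\mathbb{Z}),\]
in $R(W)$. My strategy is to extend scalars to the fraction field of $R(W)$ (adjoining a splitting of $\prod_j(s-u_j)$ if needed), so that $s$ acquires simple eigenvalues $u_1,\ldots,u_d$ and a decomposition $s=\sum_{i=1}^d u_iE_i$ into orthogonal spectral idempotents $E_i$; then $s^k=\sum_i u_i^kE_i$ for every $k\in\mathbb{Z}$. Setting $r_i:=\tau(E_i)$, the equations $\tau(s^j)=\delta_{j,0}$ for $0\le j\le d-1$ form an invertible Vandermonde system which pins down
\[r_i=\frac{(-1)^{d-1}\prod_{k\ne i}u_k}{\prod_{k\ne i}(u_i-u_k)}.\]
A direct computation, using $u_i^{-1}-u_k^{-1}=-(u_i-u_k)/(u_iu_k)$, then gives the key identity $r_i^{*}=r_i\cdot u_i^{\,d}/a_0$. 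Summing over $i$ translates this immediately into $\tau(s^{-k})^{*}=\tau(s^{k+d})/a_0$, as required. Uniqueness of $\tau$ follows from \cite[2.1]{BMM}, completing the proof.
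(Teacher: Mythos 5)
Your proof is correct, and for Conditions (1) and (2) it coincides with the paper's: same basis $\{1,s,\ldots,s^{d-1}\}$, same definition $\tau(s^j)=\delta_{j,0}$, commutativity for the trace property, and the same anti-triangular Gram matrix with determinant $\pm a_0^{d-1}$. Where you genuinely diverge is Condition (3). The paper observes that the basis satisfies the lifting conjecture (Conjecture \ref{lif}) and invokes \cite[Proposition 2.7]{MM10} to replace Condition (3) by the much weaker statement \eqref{extra con2}, which here reads $\tau(s^{d-j})=0$ for $j=1,\ldots,d-1$ and is immediate from the definition of $\tau$; the whole verification takes two lines. You instead verify Condition (3) in its original form for every $b=\mathbf{s}^k$, $k\in\mathbb{Z}$, by passing to the fraction field, decomposing $s=\sum_i u_iE_i$ into the Lagrange idempotents (legitimate, since $\prod_j(s-u_j)$ already splits over $R(W)$ and only the differences $u_i-u_k$ need inverting, and the identity to be checked lives in the domain $R(W)$), solving the Vandermonde system to get $r_i=\tau(E_i)=(-1)^{d-1}\prod_{k\neq i}u_k/\prod_{k\neq i}(u_i-u_k)$, and checking $r_i^{*}=r_iu_i^{d}/a_0$ — a computation I have verified. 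Your route is longer but self-contained: it avoids the external reduction of \cite{MM10} and, as a by-product, exhibits the Schur elements $1/r_i$ of the cyclic Hecke algebra explicitly, which is of independent interest (cf.\ the role of Schur elements discussed in the introduction). The paper's route is shorter and is the one that generalises to the higher-rank groups treated later, where no such explicit spectral computation is available.
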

\begin{proof}
Let $W$ be a cyclic  group of order $d$, and let $R(W)=\mathbb{Z}[u_1^{\pm 1}, u_2^{\pm 1},\dots, u_{d}^{\pm 1}]$ for some indeterminates $u_1,u_2,\ldots,u_d$.
The algebra $\mathcal{H}(W)$ is the
	$R(W)$-algebra with presentation
	$$\mathcal{H}(W)=\langle s\;|\; s^d=a_{d-1}s^{d-1}+a_{d-2}s^{d-2}+\dots+a_1s+a_0\rangle$$
 where $a_{d-j}:=(-1)^{j-1}f_j(u_1,\ldots,u_d)$ with $f_j$ denoting the $j$-th elementary symmetric polynomial, for $j=1,\ldots,d$. In particular,
 $a_0=(-1)^{d-1} u_1u_2\dots u_d \in R(W)^\times$.
	
	Let $\mathcal{B}(W)=\{1,s,\dots, s^{d-1}\}$. Obviously, $\mathcal{B}(W)$ generates $\mathcal{H}(W)$ as an $R(W)$-module, and it is thus an $R(W)$-basis by Theorem \ref{mention}. We define the linear map $\tau: \mathcal{H}(W)\rightarrow R(W)$ given by $\tau(b)=\delta_{1b}$, for all $b\in \mathcal{B}(W)$. By definition, $\tau$  satisfies Condition (2) of Conjecture \ref{BMM sym}. As far as the Condition (1) is concerned, we obviously have $\tau(h_1h_2)=\tau(h_2h_1)$ for all $h_1, h_2 \in \mathcal{H}(W)$, since the algebra  $\mathcal{H}(W)$ is commutative.  For the second part of Condition (1), we will show that the determinant of the Gram matrix $A(W)=(\tau(bb'))_{b,b'\in \mathcal{B}(W)}$   is a unit in $R(W)$. Since  $\tau(s^{j-1})=\delta_{1j}$ for $j=1,\ldots,d$ and $s^d=a_{d-1}s^{d-1}+a_{d-2}s^{d-2}+\dots+a_1s+a_0$, the matrix $A(W)$ is a $d \times d$ matrix of the following form:
	$$A(W)=\begin{bmatrix}
	1&0&\cdots&0&0&0\\
	0&0&\cdots&0&0&a_0\\
	0&0&\cdots&0&a_0&*\\
	0&0&\cdots&a_0&*&*\\
	\vdots&\vdots&\udots&\udots&&\vdots\\
	0&a_0&*&\cdots&*&*
	\end{bmatrix}$$
	whence $\det(A(W))=(-1)^{\frac{(d-1)(d-2)}{2}}a_0^{d-1} \in R(W)^{\times}$.
	
	It remains now to prove Condition (3) of Conjecture \ref{BMM sym}.  Since the 
	lifting conjecture (Conjecture \ref{lif}) is satisfied by the basis $\mathcal{B}(W)$, it suffices to prove Condition \eqref{extra con2} instead. Since $T_{\boldsymbol{\pi}}=s^d$ in this case,  this amounts to proving that $\tau(s^{d-j})=0$, for every $j=1,\dots, d-1$. The result follows  from the definition of $\tau$, since $s^{d-j}\in \mathcal{B}(W)\setminus\{1\}$ for $j=1,\dots, d-1$.
	\end{proof}

\begin{cor}\label{l2}
	Let $W$ be a complex reflection group and let $H \in \mathcal{A}$. The 
	BMM symmetrising conjecture holds for $W_H$. In particular, $\mathcal{H}_H(W)$ admits a canonical symmetrising trace.
\end{cor}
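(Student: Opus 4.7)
The plan is to deduce the corollary immediately from the preceding proposition, using the structural facts about $W_H$ that were already established in the excerpt. First I would invoke Example \ref{minimal parabolic}: since $H \in \mathcal{A}$, the minimal parabolic subgroup $W_H$ is isomorphic to a finite subgroup of $\mathbb{C}^{\times}$ and is therefore cyclic. Proposition \ref{l1} then applies verbatim to $W_H$, yielding the BMM symmetrising trace conjecture for $W_H$, with the explicit canonical symmetrising trace $\tau_H$ defined on the basis $\{1,s,\ldots,s^{|W_H|-1\}}$ of $\mathcal{H}(W_H)$ by $\tau_H(s^j) = \delta_{0j}$.

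For the second assertion, I would recall the description of $\mathcal{H}_H(W)$ given in \eqref{phitensor}, namely that $\mathcal{H}_H(W)$ is (the image of) $\mathcal{H}(W_H) \otimes_{\varphi} R(W)$, where $\varphi : R(W_H) \to R(W)$ specialises the parameters of $W_H$ to the corresponding parameters of $W$. Extending $\tau_H$ by $R(W)$-linearity through $\varphi$ produces a linear map $\mathcal{H}_H(W) \to R(W)$; I would verify that each of the three canonicality conditions of Conjecture \ref{BMM sym} is preserved under this specialisation. The trace property and the non-degeneracy of the Gram matrix are preserved because the determinant computed in the proof of Proposition \ref{l1} is $(-1)^{(d-1)(d-2)/2} a_0^{d-1}$, whose image under $\varphi$ remains a unit of $R(W)$ (since $\varphi$ sends each $v_{\mathcal{C},j}$ to the unit $u_{\alpha(\mathcal{C}),j}$). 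Condition (2) is preserved because the composite specialisation still sends each parameter to the appropriate root of unity, and Condition (3) is preserved because it is a polynomial identity in the parameters.

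I do not expect any real obstacle here: the work was done in Proposition \ref{l1}, and the corollary is essentially a bookkeeping exercise combining that result with the definition of the parabolic Hecke subalgebra. The only point worth being careful about is that the symmetrising trace on $\mathcal{H}_H(W)$ should be called \emph{canonical} in the sense of Conjecture \ref{BMM sym}, which requires checking the three conditions after scalar extension along $\varphi$; this is routine because all three conditions are defined by identities that are stable under ring extension.
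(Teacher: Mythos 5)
Your proposal is correct and matches the paper's (implicit) argument: the paper states this corollary without proof, treating it as immediate from Example \ref{minimal parabolic} (so $W_H$ is cyclic), Proposition \ref{l1}, and the remark preceding Conjecture \ref{conjp2} that the canonical symmetrising trace on a parabolic Hecke subalgebra is obtained from that of $\mathcal{H}(W_H)$ by specialisation of scalars along $\varphi$. Your verification that the three canonicality conditions survive the extension of scalars is exactly the routine bookkeeping the paper leaves to the reader.
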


The following corollary of Proposition \ref{l1} gives a criterion for the validity of Conjecture \ref{conjp2} with respect to minimal parabolic subgroups, and thus its overall validity for complex reflection groups of rank $2$.

\begin{cor}\label{l3}
Let $W$ be a complex reflection group such that $\mathcal{H}(W)$ admits a canonical symmetrising trace $\tau$.
Let $H \in \mathcal{A}$, and let $s$ be a generator of $\mathcal{H}_H(W)$. 
If 
\begin{equation}\label{crit}
\tau(s^j)=0 \text{ for all }j=1,\ldots,|W_H|-1,
\end{equation}
 then $\tau|_{\mathcal{H}_H(W)}$ is the canonical symmetrising trace on $\mathcal{H}_H(W)$.
\end{cor}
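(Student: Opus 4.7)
The parabolic subgroup $W_H$ is cyclic by Example \ref{minimal parabolic}, so the parabolic Hecke subalgebra $\mathcal{H}_H(W)$ is obtained from the cyclic Hecke algebra of Proposition \ref{l1} by extension of scalars from $R(W_H)$ to $R(W)$ via $\varphi$. The proof of Proposition \ref{l1} constructs the canonical symmetrising trace on $\mathcal{H}(W_H)$ explicitly, by the rule $s^j \mapsto \delta_{0j}$ for $j = 0, 1, \ldots, |W_H|-1$. Extension of scalars preserves this formula: the canonical symmetrising trace on $\mathcal{H}_H(W)$, whose existence is guaranteed by Corollary \ref{l2}, is the unique $R(W)$-linear map $\mathcal{H}_H(W) \to R(W)$ taking the values $\delta_{0j}$ on the $R(W)$-basis $\{1, s, s^2, \ldots, s^{|W_H|-1}\}$.

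Since $\tau|_{\mathcal{H}_H(W)}$ is also an $R(W)$-linear map from $\mathcal{H}_H(W)$ to $R(W)$, the plan is to check that it takes exactly these same values on that basis, after which the two maps must coincide. The hypothesis \eqref{crit} immediately gives $\tau(s^j) = 0 = \delta_{0j}$ for $j = 1, \ldots, |W_H|-1$, so the only remaining point is the normalization $\tau(1) = 1$. To extract this, one applies Condition $(3)$ of Conjecture \ref{BMM sym} (satisfied by $\tau$ since it is the canonical symmetrising trace on $\mathcal{H}(W)$) to $b = 1 \in B(W)$. This yields
$$\tau(1)^* \;=\; \frac{\tau(T_{\boldsymbol{\pi}})}{\tau(T_{\boldsymbol{\pi}})} \;=\; 1,$$
and since the involution $(\cdot)^*$ of $R(W)$ fixes $1$, one concludes $\tau(1) = 1$.

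There is no genuine obstacle in the overall argument: it is a short bookkeeping comparison on an explicit $R(W)$-basis of $\mathcal{H}_H(W)$, and the non-degeneracy of $\tau|_{\mathcal{H}_H(W)}$ on $\mathcal{H}_H(W)$ follows automatically once the equality with the canonical trace is established. The only subtlety worth flagging is that the normalization $\tau(1) = 1$ is not among the explicit hypotheses on $\tau$ and must be deduced from Condition $(3)$ of the BMM trace conjecture; everything else is immediate from Proposition \ref{l1} and the hypothesis \eqref{crit}.
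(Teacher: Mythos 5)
Your argument is correct and is exactly the (implicit) argument the paper intends: the canonical trace on $\mathcal{H}_H(W)$ is the scalar extension of the one built in Proposition \ref{l1}, namely $s^j\mapsto\delta_{0j}$ on the $R(W)$-basis $\{1,s,\ldots,s^{|W_H|-1}\}$, and \eqref{crit} together with $\tau(1)=1$ forces $\tau|_{\mathcal{H}_H(W)}$ to agree with it on that basis. Your extra care in deducing the normalisation $\tau(1)=1$ from Condition $(3)$ of Conjecture \ref{BMM sym} (rather than assuming it) is a legitimate and welcome refinement; the paper takes this for granted since in all cases considered $\tau$ is defined on a good basis containing $1$ with $\tau(b)=\delta_{1b}$.
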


If now $H,H' \in \mathcal{A}$ belong to the same orbit under the action of $W$, then the parabolic subgroups $W_H$ and $W_{H'}$ are conjugate. Since $\tau$ is a trace map, Condition \eqref{crit} holds for $\mathcal{H}_H(W)$ if and only if it holds for 
$\mathcal{H}_{H'}(W)$. Hence, it is enough to verify its validity for only one hyperplane per hyperplane orbit.
Following Proposition \ref{conjtogen}, each hyperplane orbit includes the reflecting hyperplane of a generator of $W$. Therefore, if Condition  \eqref{crit}  is satisfied by every generator of $\mathcal{H}(W)$ (or at least one generator per conjugacy class), then the parabolic symmetrising trace conjecture holds for all minimal parabolic subgroups; in the particular case where $W$ is a complex reflection group of rank $2$, this yields the validity of the parabolic symmetrising trace conjecture.

\begin{thm}
The parabolic symmetrising trace conjecture holds for $G_4, G_5, G_6, G_7,  G_8, G_{12}, G_{13},G_{22}$.
\end{thm}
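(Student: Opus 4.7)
The plan is to apply Corollary \ref{l3} uniformly to each group in the list. Since all of $G_4, G_5, G_6, G_7, G_8, G_{12}, G_{13}, G_{22}$ have rank $2$, every non-trivial proper parabolic subgroup is of the form $W_H$ for some $H \in \mathcal{A}$ and is cyclic by Example \ref{minimal parabolic}. For each such $W$, the BMM symmetrising trace conjecture is known to hold, so a canonical symmetrising trace $\tau$ on $\mathcal{H}(W)$ is available. By Proposition \ref{conjtogen} together with the discussion preceding the theorem, it therefore suffices to check Condition \eqref{crit} for one generator $s$ per conjugacy class of distinguished pseudo-reflections in $W$, i.e.\ $\tau(s^j) = 0$ for $j = 1, \ldots, |W_H| - 1$.

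For $W \in \{G_4, G_5, G_6, G_7, G_8, G_{13}\}$, I would exploit the good basis $\mathcal{B}(W)$ from \cite{BCCK, BCC}, which satisfies $\tau(b) = \delta_{1b}$ for all $b \in \mathcal{B}(W)$. For every generator $s$ that already lies in $\mathcal{B}(W)$, I would verify by direct inspection of the basis construction that each positive power $s^j$ with $1 \le j \le |W_H|-1$ is again an element of $\mathcal{B}(W) \setminus \{1\}$, so that $\tau(s^j) = 0$ is immediate. For generators not appearing in $\mathcal{B}(W)$ --- the only case being one of the three generators of $\mathcal{H}(G_7)$, as noted in Section \ref{gap3section} --- I would feed $s^j$ into the GAP3 algorithm of that section, read off the coefficient of $1$ in the resulting expansion over $\mathcal{B}(W)$, and confirm that it is zero.

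For $W \in \{G_{12}, G_{22}\}$, all generators are $2$-reflections, so $|W_H| = 2$ and Condition \eqref{crit} reduces to the single equality $\tau(s) = 0$ per conjugacy class. Here I would invoke the explicit basis used by Malle and Michel \cite{MM10} to construct $\tau$: it contains every generator of $\mathcal{H}(W)$ and satisfies $\tau(b) = \delta_{1b}$, so the required vanishing is built into the definition of $\tau$.

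The main obstacle is computational rather than conceptual: the inductive construction of $\mathcal{B}(W)$ in \cite{BCCK, BCC} does not transparently display which powers of generators are basis elements, so for $G_7$ and $G_{13}$ one genuinely needs the algorithm of Section \ref{gap3section} to handle the problematic powers, whereas for $G_4, G_5, G_6, G_8, G_{12}, G_{22}$ the answer can simply be read off the explicit basis tables.
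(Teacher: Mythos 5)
Your proposal is correct and follows essentially the same route as the paper: reduce via Corollary \ref{l3} and Proposition \ref{conjtogen} to checking $\tau(s^j)=0$ for one generator per conjugacy class, then read the vanishing off the good bases of \cite{BCCK,BCC,MM10}. The only cosmetic difference is the third generator of $G_7$ (which has order $2$, so only $\tau(s)=0$ is needed): the paper concludes directly from the fact that $s$ is already expressed in \cite[\S 4.2.4]{BCCK} as a linear combination of elements of $\mathcal{B}(G_7)\setminus\{1\}$, whereas you would recompute this with the GAP3 algorithm --- both are fine.
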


\begin{proof}
Let $W \in \{G_4,G_5,G_6,G_8,G_{12},G_{13},G_{22}\}$. Condition  \eqref{crit}  is satisfied by all generators of $\mathcal{H}(W)$, because all required powers of the generators belong to the good basis $\mathcal{B}(W)$ considered in \cite{BCCK,BCC,MM10}, for which we have $\tau(b)=\delta_{1b}$ for all $b \in \mathcal{B}(W)$ (in the case of $G_8$, this is true for one of the two generators, but the two generators are conjugate).
For $W=G_7$, this is also the case for the two generators that belong to $\mathcal{B}(G_7)$.  However, for the third generator $s$ (as denoted in \cite{BCCK}, later in this paper we denote it by $s_1$), we only need to check that $\tau(s)=0$. 
Since $s$ is written as a linear combination of elements of $\mathcal{B}(G_7)\setminus \{1\}$ (see \cite[\S 4.2.4]{BCCK}), we obtain that $\tau(s)=0$, as required.
\end{proof}

\subsection{The parabolic freeness conjecture}

The following conjecture was later stated by Malle and Rouquier in their article on families of characters for complex reflection groups \cite[Conjecture 2.1]{malrou}:
\begin{conj}[The parabolic freeness conjecture]\label{conjp1}
For each parabolic subgroup $W_I$ of $W$, the algebra $\mathcal{H}(W)$ is free as a left and right $\mathcal{H}_I(W)$-module of rank $|W|/|W_I|$.
\end{conj}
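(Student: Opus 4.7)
The plan is to establish the conjecture for each of the rank $2$ exceptional groups $G_4, G_5, G_6, G_7, G_8, G_{12}, G_{13}, G_{14}, G_{16}, G_{22}$ by a case-by-case verification, using the GAP3 machinery set up in Section \ref{gap3section} and the strategy sketched in the introduction. The first step is a reduction: because the embedding $\mathcal{H}_I(W)\hookrightarrow \mathcal{H}(W)$ in \eqref{phitensor} is well-defined up to $P(W)$-conjugation, the freeness statement depends only on the $W$-conjugacy class of $W_I$; by Proposition \ref{conjtogen}, every minimal parabolic subgroup $W_H$ is then conjugate to $\langle s\rangle$ for some generator $s$ in the Coxeter-like presentation of $W$. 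Since $W$ is of rank $2$, the only parabolic subgroups are $\{1\}$, $W$ itself, and the cyclic ones $W_H$ with $H\in\mathcal{A}$; freeness over $\{1\}$ is the BMR freeness conjecture (now a theorem for all the groups in question), and freeness over $W$ is trivial. Hence it suffices, for each generator $s$, to exhibit an $\mathcal{H}_H(W)$-basis of $\mathcal{H}(W)$ of the expected cardinality $|W|/|W_H|$.

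The next step is the construction of candidate bases. Following the philosophy of the Etingof--Rains surjection used in \cite{Ch17}, for each generator $s$ I would choose a convenient reduced expression in $B(W)$ for every coset representative of $W/W_H$, which yields a candidate set $\mathfrak{B}_s^l\subset \mathcal{H}(W)$ of $|W|/|W_H|$ elements; the total set
\[
\mathcal{B}_s^l=\{s^j b\mid 0\le j\le |W_H|-1,\; b\in \mathfrak{B}_s^l\}
\]
has then $|W|$ elements. By construction, $\mathcal{B}_s^l$ specialises to $W$ in the group algebra and contains $1$, so it is a reasonable candidate for an $R(W)$-basis satisfying (i)--(iii) of Section \ref{good bases}. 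A right-handed variant $\mathcal{B}_s^r$ is produced symmetrically.

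To verify that $\mathcal{B}_s^l$ really is a basis, I would run the GAP3 algorithm of Lemma \ref{gap-al} to express every element of $\mathcal{B}_s^l$ as an $R(W)$-linear combination of the good basis $\mathcal{B}(W)$ of \cite{BCCK, BCC} (for $W\in\{G_4,\ldots,G_8,G_{13}\}$), and then compute the determinant of the resulting change-of-basis matrix $M$. If $\det(M)\in R(W)^\times$, then $\mathcal{B}_s^l$ is an $R(W)$-basis of $\mathcal{H}(W)$, and therefore $\mathfrak{B}_s^l$ is a left $\mathcal{H}_H(W)$-basis of $\mathcal{H}(W)$ of rank $|W|/|W_H|$, which is what is required. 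For $G_5$ and $G_6$ I would avoid redoing the computation by exploiting the fact that $G_5$ and $G_6$ embed naturally into $G_7$ (they share reflecting hyperplanes and generators), so that parabolic bases for $G_5, G_6$ can be read off from the $G_7$ basis. For $G_{12}, G_{14}, G_{16}$ and $G_{22}$, which are not handled by the GAP3 program, I would instead inspect the bases already available in the literature (e.g.\ those of \cite{MaPf, Ch17, Mar41}) and check directly that they have the required form $\{s^j b\}_{j,b}$ for each generator~$s$.

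The main obstacle is the construction step: the choice of expressions defining $\mathfrak{B}_s^l$ is far from canonical, and a naive choice typically produces a matrix $M$ with non-unit determinant. Experience from \cite{Ch17} suggests this will be especially delicate for $G_{13}$, where each coset has many competing candidate lifts and only some combinations yield a unit determinant; here one must proceed by systematic trial and error, checking at each attempt with the GAP3 program until a successful set is found. Once such sets are produced for every generator of every group in the list, the theorem follows.
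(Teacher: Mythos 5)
Your outline reproduces the paper's strategy almost step for step: reduce to the cyclic parabolic subgroups attached to the generators (one per conjugacy class, via Proposition \ref{conjtogen}), build candidate sets $\mathfrak{B}_s^l$, $\mathfrak{B}_s^r$ from Etingof--Rains-style lifts of coset representatives, certify them with the GAP3 change-of-basis determinant for $G_4$, $G_7$, $G_8$, $G_{13}$, deduce $G_5$ and $G_6$ from $G_7$, and read off the remaining groups from bases already in the literature. There is, however, one genuine gap. For $G_{12}$, $G_{14}$, $G_{16}$ and $G_{22}$ the literature supplies a \emph{single} basis, hence at best a one-sided parabolic basis with respect to one generator per conjugacy class; your plan to ``check directly that they have the required form $\{s^jb\}_{j,b}$'' therefore only yields freeness on one side, whereas Conjecture \ref{conjp1} asserts freeness as a left \emph{and} right module of rank $|W|/|W_I|$. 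The paper closes this gap with Proposition \ref{lr} (due to Marin): the anti-automorphism of $\mathcal{H}(W)$ induced by $b\mapsto b^{-1}$ on $B(W)$, composed with the parameter inversion $u_{\mathcal{C},j}\mapsto u_{\mathcal{C},j}^{-1}$, preserves $\mathcal{H}_I(W)$ and carries a left $\mathcal{H}_I(W)$-basis to a right one (and conversely), and specialisation to $\mathbb{Z}[W]$ forces the rank to be $|W|/|W_I|$. Without this proposition (or Sch\"onnenbeck's variant, which is conditional on the parabolic trace conjecture), your argument proves only the one-sided statement for those four groups.

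A secondary, more minor point: your justification of the $G_5$, $G_6$ step is too loose as written. The paper does not use a literal sharing of generators, but rather a specialisation $\theta$ of the parameters of $\mathcal{H}(G_7)$, Malle's decomposition $A=\bigoplus_j s_2^j\bar A=\bigoplus_j\bar As_2^j$, and Lemma \ref{center} to rewrite the powers of the central element $z$ so that the $G_7$ parabolic basis visibly splits along that decomposition; a counting argument (a spanning set whose cardinality equals the rank of a free module is a basis) then concludes. You would need to spell out some such mechanism to make that step rigorous.
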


Sch\"onnenbeck \cite[Lemma 1.4.3]{Sch} has shown that if Conjecture \ref{conjp2} is true, then it is enough to prove Conjecture \ref{conjp1} only  ``from the left''.
However, as Marin pointed out to us with the following proposition, even without the validity of Conjecture \ref{conjp2}, it is enough to prove the parabolic freeness conjecture only ``from one side''.

\begin{prop}[Marin] \label{lr}
	For each parabolic subgroup $W_I$ of $W$, the algebra $\mathcal{H}(W)$ is free as a left  $\mathcal{H}_I(W)$-module of finite rank $r$ if and only if the algebra $\mathcal{H}(W)$ is free as a right $\mathcal{H}_I(W)$-module of finite rank $r$. Moreover, $r=|W|/|W_I|$.
\end{prop}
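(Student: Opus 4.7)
The plan is to construct an $\eta$-semilinear anti-involution $\sigma$ of $\mathcal{H}(W)$ that stabilises $\mathcal{H}_I(W)$, and then to use it to convert a free left $\mathcal{H}_I(W)$-basis of $\mathcal{H}(W)$ into a free right one, and vice versa. The rank $r$ will then be pinned down by the BMR freeness theorem applied to both $W$ and $W_I$.

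To construct $\sigma$, note that the map $g \mapsto g^{-1}$ is a group anti-automorphism of $B(W)$, which extends $R(W)$-linearly to an anti-automorphism of the group algebra $R(W)[B(W)]$. Let $\eta$ denote the $\mathbb{Z}$-algebra involution of $R(W)$ that sends each $u_{\mathcal{C},j}$ to $u_{\mathcal{C},j}^{-1}$. Composing with $\eta$ yields an $\eta$-semilinear anti-automorphism $\sigma$ of $R(W)[B(W)]$. The crucial step is to show that $\sigma$ descends to $\mathcal{H}(W)$, i.e. maps the ideal generated by \eqref{Hecker} into itself. Applied to the defining relation for a braided reflection $s$, $\sigma$ produces the element
$$(s^{-1} - u_{\mathcal{C},e_{\mathcal{C}}}^{-1})(s^{-1} - u_{\mathcal{C},e_{\mathcal{C}}-1}^{-1}) \cdots (s^{-1} - u_{\mathcal{C},1}^{-1}),$$
and a routine computation (multiplying \eqref{Heckerp} by $s^{-e_{\mathcal{C}}}$ and dividing by the unit $a_{\mathcal{C},0}$) shows that this element already vanishes in $\mathcal{H}(W)$. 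Hence $\sigma$ descends to an $\eta$-semilinear anti-involution of $\mathcal{H}(W)$.

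Next, I will verify that $\sigma$ preserves $\mathcal{H}_I(W)$. By \eqref{phitensor}, $\mathcal{H}_I(W)$ is generated as an $R(W)$-subalgebra by the images of the braided reflections of $B_I(W)$ under the embedding $B_I(W)\hookrightarrow B(W)$. Under $\sigma$, each such braided reflection is sent to its inverse, which lies in $\mathcal{H}_I(W)$ by the inverse Hecke relations \eqref{invhecke}. Combined with the fact that $\sigma|_{R(W)}=\eta$ preserves $R(W)$, this yields $\sigma(\mathcal{H}_I(W))\subseteq\mathcal{H}_I(W)$; because $\sigma^2=\mathrm{id}$, the inclusion is an equality.

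Now suppose $\{h_1,\ldots,h_r\}$ is a free left $\mathcal{H}_I(W)$-basis of $\mathcal{H}(W)$. Given $h\in\mathcal{H}(W)$, writing $\sigma(h)=\sum_i x_i h_i$ with $x_i\in\mathcal{H}_I(W)$ and applying $\sigma$ yields $h=\sum_i \sigma(h_i)\,\sigma(x_i)$, so $\{\sigma(h_1),\ldots,\sigma(h_r)\}$ spans $\mathcal{H}(W)$ as a right $\mathcal{H}_I(W)$-module. For uniqueness, any dependence $\sum_i \sigma(h_i)\,y_i=0$ with $y_i\in\mathcal{H}_I(W)$ yields, upon applying $\sigma$, the equality $\sum_i \sigma(y_i)\,h_i=0$, and the left freeness forces $\sigma(y_i)=0$ hence $y_i=0$ for all $i$. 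The reverse direction is symmetric, so left and right freeness (of the same rank) are equivalent. Finally, the BMR freeness theorem applied to both $W$ and $W_I$ makes $\mathcal{H}(W)$ and $\mathcal{H}_I(W)$ free over $R(W)$ of ranks $|W|$ and $|W_I|$ respectively; composing the two free decompositions gives $\mathcal{H}(W)\cong R(W)^{r|W_I|}$ as $R(W)$-modules, whence $r=|W|/|W_I|$. The main delicacy is the construction of $\sigma$: the naive anti-automorphism $g\mapsto g^{-1}$ alone does not preserve the Hecke relations, and the semi-linear twist by $\eta$ is essential for everything to work.
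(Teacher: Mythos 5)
Your proof is correct and follows essentially the same route as the paper: the paper's ring isomorphism $\mathcal{H}(W)^{op}\xrightarrow{\sim}\mathcal{H}(W)$, built from $b\mapsto b^{-1}$ twisted by the semilinear map $u_{\mathcal{C},j}\mapsto u_{\mathcal{C},j}^{-1}$, is exactly your anti-involution $\sigma$, and the transport of a left basis to a right basis is identical. The only (immaterial) difference is the final rank computation: the paper specialises the free decomposition to $\mathbb{Z}[W]$ and counts, whereas you invoke the BMR freeness theorem for both $W$ and $W_I$ and compare ranks of free $R(W)$-modules; both are valid.
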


\begin{proof}
	Let $*:R(W)\rightarrow R(W)$ be the ring automorphism given by $u_{\mathcal{C},j}\mapsto u^{-1}_{\mathcal{C},j}$, which we have seen already in Conjecture \ref{BMM sym}(3). The group automorphism $B(W)\rightarrow B(W)^{op}$ defined by $b\mapsto b^{-1}$ induces an $R(W)$-algebra automorphism $\iota^{op}: R(W)[B(W)]\xrightarrow{\sim} (R(W)[B(W)])^{op}=R(W)[B(W)^{op}]$. 
	
Let now 	$\mathcal{J}$ denote the 2-sided ideal of $R(W)[B(W)]$ generated by the elements $\prod_{j=1}^{e_{\mathcal{C}}}(s-u_{\mathcal{C},j})$ as in \eqref{Hecker}. We have:
$$\iota^{op} \left( \prod\limits_{j=1}^{e_{\mathcal{C}}}(s-u_{\mathcal{C},j}) \right)
=\prod\limits_{j=1}^{e_{\mathcal{C}}}(s^{-1}-u_{\mathcal{C},j})=
\prod\limits_{j=1}^{e_{\mathcal{C}}}
	u_{\mathcal{C},j} s^{-1}(u_{\mathcal{C},j}^{-1}-s)=
(-s)^{-e_{\mathcal{C}}}
	u_{\mathcal{C},1}u_{\mathcal{C},2} \dots u_{\mathcal{C},e_{\mathcal{C}}}	  \prod\limits_{j=1}^{e_{\mathcal{C}}}(s-u_{\mathcal{C},j}^{-1})
$$
and so  $\iota^{op} (\mathcal{J})$ is the 2-sided ideal of $(R(W)[B(W)])^{op}$ generated by $\prod_{j=1}^{e_{\mathcal{C}}}(s-u^{-1}_{\mathcal{C},j})$.	
Hence, 	$\iota^{op}(\mathcal{J})=(\mathcal{J}^{op})^{*}$, where $\mathcal{J}^{op}$ denotes the 2-sided ideal of $R(W)[B(W)^{op}]$ generated by the elements
	$\prod_{j=1}^{e_{\mathcal{C}}}(s-u_{\mathcal{C},j})$.
	Since $R(W)[B(W)^{op}]/\mathcal{J}^{op}=\mathcal{H}(W)^{op}$ (see \cite[1.30(a)]{BMM}), we have:
	
	\begin{center}
		\begin{tikzcd}
		R(W)[B(W)^{op}] \arrow[rightarrow, "*"]{r}\arrow[twoheadrightarrow]{d}
		&	R(W)[B(W)^{op}] \arrow[rightarrow, "(\iota^{op})^{-1}"]{r}\arrow[twoheadrightarrow]{d}
		& R(W)[B(W)]\arrow[twoheadrightarrow]{d}\\
		 \mathcal{H}(W)^{op}\arrow[rightarrow, "\sim"]{r}
		& R(W)[B(W)^{op}]/(\mathcal{J}^{op})^*=R(W)[B(W)^{op}]/\iota^{op}(\mathcal{J})
		\arrow[rightarrow,"\sim"]{r}&\mathcal{H}(W)
		\end{tikzcd}
	\end{center}
	The composition $\grc: \mathcal{H}(W)^{op}\rightarrow \mathcal{H}(W)$ is a ring automorphism which maps $\mathcal{H}_I(W)^{op}$ onto $\mathcal{H}_I(W)$.
	
	Now assume that $\mathcal{H}(W)$ is a free left $\mathcal{H}_I(W)$-module of rank $r<\infty$. Then there exists a basis $\mathfrak{B}$ of $\mathcal{H}(W)$ with $|\mathfrak{B}|=r$  such that $\mathcal{H}(W) \cong \bigoplus_{b\in\mathfrak{B}}\mathcal{H}_I(W)b$ as $\mathbb{\Z}$-modules. We apply now the automorphism $\grc^{-1}$ and, using the fact that $\mathcal{H}(W)^{op} \cong \mathcal{H}(W)$ as $\mathbb{Z}$-modules, we obtain:
	$$\mathcal{H}(W) \cong \bigoplus_{b\in\mathfrak{B}}\mathcal{H}_I(W)^{op}\grc^{-1}(b) \cong \bigoplus_{b\in\mathfrak{B}} \grc^{-1}(b)\mathcal{H}_I(W).
	$$
	Hence, $\mathcal{H}(W)$ is a free right $\mathcal{H}_I(W)$-module of rank $r<\infty$.
		The converse is similar.
	
	We now specialise $\mathcal{H}(W) \cong \bigoplus_{b\in\mathfrak{B}}\mathcal{H}_I(W)b$ to $\mathbb{Z}[W]$ and we obtain  that $\mathbb{Z}[W] \cong \bigoplus_{b\in\mathfrak{B}}\mathbb{Z}[W_I] b$, whence $r=|W|/|W_I|$.
	
	\end{proof}

Thanks to Proposition \ref{lr}, there is an equivalent version of Conjecture \ref{conjp1}:
\begin{conj}[The one-sided parabolic freeness conjecture]\label{conjpp1}
	For each parabolic subgroup $W_I$ of $W$, the algebra $\mathcal{H}(W)$ is free as a left or  right $\mathcal{H}_I(W)$-module of finite rank.
\end{conj}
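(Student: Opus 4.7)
The statement is a conjecture rather than a theorem, so what actually needs justification is the claim that Conjecture \ref{conjpp1} is equivalent to Conjecture \ref{conjp1}. The plan is to show this equivalence as a direct consequence of Proposition \ref{lr}. Since Conjecture \ref{conjp1} obviously implies Conjecture \ref{conjpp1} (a free module of rank $|W|/|W_I|$ on both sides is in particular free of finite rank on one side), the only nontrivial direction is Conjecture \ref{conjpp1} $\Rightarrow$ Conjecture \ref{conjp1}.

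For that direction, I would assume that $\mathcal{H}(W)$ is free as a left (or, symmetrically, right) $\mathcal{H}_I(W)$-module of some finite rank $r$. Then Proposition \ref{lr} delivers both missing pieces in one stroke: first, the automorphism $\psi : \mathcal{H}(W)^{op} \xrightarrow{\sim} \mathcal{H}(W)$ constructed in its proof transports a left-module decomposition $\mathcal{H}(W) \cong \bigoplus_{b \in \mathcal{B}} \mathcal{H}_I(W)\,b$ to a right-module decomposition $\mathcal{H}(W) \cong \bigoplus_{b \in \mathcal{B}} \psi^{-1}(b)\,\mathcal{H}_I(W)$ of the same rank $r$, so two-sided freeness is automatic; second, specialising the Hecke decomposition through $u_{\mathcal{C},j} \mapsto \exp(2\pi i j/e_{\mathcal{C}})$ collapses $\mathcal{H}(W)$ and $\mathcal{H}_I(W)$ to the group algebras $\mathbb{Z}[W]$ and $\mathbb{Z}[W_I]$, and then a cardinality count forces $r = |W|/|W_I|$. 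Hence Conjecture \ref{conjp1} holds in its full two-sided, prescribed-rank form.

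Since all the real work is already absorbed into Proposition \ref{lr}, there is no genuine obstacle here; the argument is essentially a repackaging. The only point worth flagging is the disjunctive phrasing ``left or right'' in Conjecture \ref{conjpp1}: one must invoke Proposition \ref{lr} in the appropriate direction depending on which side is assumed free, but the symmetry of the proposition makes both cases simultaneous. Thus the weaker-looking one-sided, unspecified-rank version is in fact logically equivalent to the stronger two-sided, rank $|W|/|W_I|$ version, which is what the statement asserts.
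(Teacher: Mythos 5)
Your proposal is correct and matches the paper exactly: the paper introduces Conjecture \ref{conjpp1} precisely as an equivalent reformulation of Conjecture \ref{conjp1}, justified by Proposition \ref{lr}, whose proof supplies both the transfer of a one-sided decomposition to the other side via the automorphism $\psi$ and the forced rank $|W|/|W_I|$ by specialisation to $\mathbb{Z}[W]$. Nothing further is needed.
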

Conjecture \ref{conjpp1} holds for all real reflection groups \cite[\S 4.4.7, Relation (b)]{gepf}. For the infinite family $G(de,e,n)$, it has been proved for the parabolic subgroups of type $G(de,e,n-1)$ \cite[Theorem 2.4.21]{Sch}.

Our aim in the rest of the paper will be to prove the parabolic freeness conjecture for the exceptional groups of rank $2$ that we studied in \cite{BCCK} and \cite{BCC}. If $W$ is a complex reflection group of rank $2$, then all its non-trivial proper parabolic subgroups are of the form $W_H$ for some $H \in \mathcal{A}$.  As for the parabolic symmetrising trace conjecture, due to Proposition \ref{conjtogen}, it is enough to prove the parabolic freeness conjecture for every parabolic subgroup generated by a generator of $W$ (or at least a generator per conjugacy class).
Let now $s$ be a generator of $\mathcal{H}(W)$ and let $H$ be the  reflecting hyperplane of the image of $s$ in $W$. For $\mathcal{H}(W)$ to be a free left  
$\mathcal{H}_H(W)$-module of rank $r=|W|/|W_H|$, it is enough to find a subset $\mathfrak{B}_s^l(W)$ of $\mathcal{H}(W)$, with $|\mathfrak{B}_s^l(W)|=r$, such that the set
\begin{equation}\label{notationparab}
\mathfrak{P}_s^l(W):=\{s^jb\,|\, j=0,\ldots, |W_H|-1,\,b \in \mathfrak{B}_s^l(W)\}
\end{equation}
is a basis of $\mathcal{H}(W)$ as an $R(W)$-module. Then $\mathfrak{B}_s^l(W)$ is a basis of $\mathcal{H}(W)$ as a left $\mathcal{H}_H(W)$-module.
We call the set  $\mathfrak{P}_s^l(W)$ a \emph{left parabolic basis of $\mathcal{H}(W)$ with respect to the generator $s$}. We can similarly define $\mathfrak{B}_s^r(W)$ and $\mathfrak{P}_s^r(W)$ when considering $\mathcal{H}(W)$ as a right $\mathcal{H}_H(W)$-module.

As we explained in the last paragraph of Section \ref{gap3section}, the GAP3 program that we created allows us to check whether  any subset of the Hecke algebra $\mathcal{H}(W)$ consisting of $|W|$ elements is a basis of $\mathcal{H}(W)$ as an $R(W)$-module, for $W \in \{G_4,G_5,G_6,G_7,G_8,G_{13}\}$.  So, in the next section, we will explain how we came up with good candidates for left and right parabolic bases for these groups, thus proving directly the original parabolic freeness conjecture.

\subsection{Finding parabolic bases}\label{Finding parabolic bases}

From now on, for any group $G$, we will denote by $\overline{G}$ the quotient $G/Z(G)$. 
For every $g \in G$, we will denote by $\bar{g}$ its image under the natural surjection $G \rightarrow \overline{G}$.

Let $W$ be an exceptional irreducible complex reflection group of rank 2. Following the Shephard--Todd classification, this means that $W$ is one of the groups $G_4, G_5, \dots, G_{22}$. By \cite[Chapter 6]{lehrer}, we know that these groups divided into 3 smaller families, according to whether the group $\overline{W}$ is the tetrahedral group (which is the alternating group $\mathfrak{A}_4$), octahedral group (which is the symmetric  group $\mathfrak{S}_4$), or icosahedral group (which is the alternating group $\mathfrak{A}_5$). More precisely, we have the \emph{tetrahedral family}, which includes the groups $G_4,\dots, G_7$, the \emph{octahedral family}, which includes the groups $G_8,\dots, G_{15}$, and the \emph{icosahedral family}, which includes the rest.
In each family, there is a maximal group of order $|\overline{W}|^2$ and all the other groups are its subgroups. These are the groups $G_7$, $G_{11}$ and $G_{19}$. Moreover, the group $\overline{W}$ is isomorphic to the subgroup of a finite Coxeter group $C$ of rank 3 (of type $A_3$, $B_3$ and $H_3$ for the tetrahedral, octahedral and icosahedral family respectively), consisting of the elements of even Coxeter length.

We saw in Sections \ref{rc} and \ref{br}  that every complex reflection group has a Coxeter-like presentation and that its associated  braid group  has an Artin-like presentation.
We call this the \emph{BMR presentation}, due to Brou\'e, Malle and Rouquier.
In  \cite[\S 6.1]{ERrank2}, Etingof and Rains gave different presentations for the exceptional groups of rank 2 and their associated braid groups, based on the BMR presentations of the maximal groups $G_7$, $G_{11}$ and $G_{19}$. We call these the  \emph{ER presentations}.
In \cite[Appendix A]{Ch17},  the first author gives the two presentations for every $W$ and $B(W)$, as well as the isomorphisms between the BMR and ER presentations. Notice that for the maximal groups the ER presentations coincide with the BMR presentations. Moreover, the number of generators in the ER presentation is always $3$, while in the BMR presentation it can be either $2$ (for well-generated groups, such as $G_8$) or $3$ (for not well-generated groups, such as $G_7$ or $G_{13}$).

\subsubsection{Deformed Coxeter group algebras} Let $W$ be an exceptional group of rank 2 and let $(C,S)$ be a finite Coxeter system  of rank $3$ with $S=\{y_1,y_2, y_3\}$. We set $m_{ij}:=m_{y_iy_j}$ and $\tilde{\mathbb{Z}}:=\mathbb{Z} [{\rm exp}(2\pi i /m_{ij})]$. In \cite[\S 2]{ERcoxeter}, Etingof and Rains defined a $\tilde{\mathbb{Z}}$-algebra  $A(C)$ associated to $C$, with a  presentation given by:
\begin{itemize}[leftmargin=*]
	\item generators: $Y_1, Y_2, Y_3$, $t_{ij,k}$, where $i,j\in\{1,2,3\}$, $i\not=j$ and $k\in\mathbb{Z}/m_{ij}\mathbb{Z}$;
	\item relations: $Y_i^2=1$, $t_{ij,k}^{-1}=t_{ji,-k}$, $\prod\limits_{k=1}^{m_{ij}}(Y_iY_j-t_{ij,k})=0$, $t_{ij,k}Y_r=Y_rt_{ij,k}$, $t_{ij,k}t_{i'j',k'}=t_{i'j',k'}t_{ij,k}$.
\end{itemize}

Let $R^C:=\tilde{\mathbb{Z}}[t_{ij,k}^{\pm 1}]=\tilde{\mathbb{Z}}[t_{ij,k}]$.
The algebra $A(C)$ is naturally an $R^C$-algebra. The sub-$R^C$-algebra $A_+(C)$ generated by $Y_iY_j$,  for $i\not=j$, can be presented as follows:
\begin{itemize}[leftmargin=*]
	\item generators: $A_{ij}:=Y_iY_j$, where $i,j\in\{1,2,3\}$, $i\not=j$;
	\item relations: $A_{ij}^{-1}=A_{ji}$, $\prod\limits_{k=1}^{m_{ij}}(A_{ij}-t_{ij,k})=0$, $A_{ij}A_{jl}A_{li}=1$, for $\#\{i,j,l\}=3$.
\end{itemize}

The following result is \cite[Lemma 2.6]{Ch17} and gives a nice presentation of the algebra $A_+(C)$.
\begin{lem}Let $C$ be a finite Coxeter group of type  $A_3$, $B_3$ or $H_3$. We can present the $R^C$-algebra $A_+(C)$ as follows:$$\left\langle
	\begin{array}{l|cl}
	&(A_{13}-t_{13,1})( A_{13}-t_{13,2})=0&\\
	A_{13},  A_{32},  A_{21}&(A_{32}-t_{32,1})( A_{32}-t_{32,2})( A_{32}-t_{32,3})=0,&  A_{13} A_{32} A_{21}=1\\
	&( A_{21}-t_{21,1})( A_{21}-t_{21,2})\dots( A_{21}-t_{21,m})=0&
	\end{array}\right\rangle,
	$$
	where $m$ is $3$, $4$ or $5$ for each type respectively.
	\label{presentation}
\end{lem}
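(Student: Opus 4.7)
The plan is to start from the presentation of $A_+(C)$ given just before the lemma, which has the six generators $\{A_{ij}\mid i\neq j\}$ together with (a) the inverse relations $A_{ji}=A_{ij}^{-1}$, (b) the order relations $\prod_{k=1}^{m_{ij}}(A_{ij}-t_{ij,k})=0$, and (c) the triangle relations $A_{ij}A_{jl}A_{li}=1$ for $\#\{i,j,l\}=3$. Using (a), I eliminate $A_{31}, A_{23}, A_{12}$ in favour of $A_{13}, A_{32}, A_{21}$. It then suffices to show that the three eliminated order relations and all triangle relations are consequences of the four relations listed in the lemma; since the converse is immediate, this identifies $A_+(C)$ with the presented algebra.

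For the eliminated order relations, I exploit the fact that the scalars $t_{ij,k}$ are central and that $A_{ij}$ is invertible (which follows from its order relation, as the constant term $(-1)^{m_{ij}-1}\prod_k t_{ij,k}$ is a unit). Starting from $\prod_{k=1}^{m_{ij}}(A_{ij}-t_{ij,k})=0$, each factor can be rewritten as $-t_{ij,k}A_{ij}(A_{ij}^{-1}-t_{ij,k}^{-1})$; multiplying the resulting product together, centrality of the $t_{ij,k}$ gives $(-1)^{m_{ij}}\bigl(\prod_k t_{ij,k}\bigr)A_{ij}^{m_{ij}}\prod_{k}(A_{ij}^{-1}-t_{ij,k}^{-1})=0$. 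Since $A_{ij}^{m_{ij}}$ and $\prod_k t_{ij,k}$ are units, and using $A_{ij}^{-1}=A_{ji}$ and $t_{ij,k}^{-1}=t_{ji,-k}$, I get $\prod_{k}(A_{ji}-t_{ji,-k})=0$, which after reindexing $k\mapsto -k$ is exactly the order relation for $A_{ji}$. This disposes of the three ``missing'' order relations.

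For the triangle relations, $\#\{i,j,l\}=3$ forces $\{i,j,l\}=\{1,2,3\}$, yielding six relations indexed by the cyclic words in $\{1,2,3\}$. Cyclic rotation ($XYZ=1\Leftrightarrow YZX=1$, by multiplication on the left by $X^{-1}$ and on the right by $X$) and inversion ($XYZ=1\Leftrightarrow Z^{-1}Y^{-1}X^{-1}=1$), combined with $A_{ij}^{-1}=A_{ji}$, collapse all six to the single relation $A_{13}A_{32}A_{21}=1$. Finally, I fix the labelling of the simple reflections of $C$ so that $y_1$ and $y_3$ commute (giving $m_{13}=2$) and so that the bond $y_2$--$y_3$ is labelled $3$ (giving $m_{32}=3$); this is possible for each of $A_3$, $B_3$ and $H_3$, and yields $m_{21}\in\{3,4,5\}$ respectively, matching the degrees of the three order relations in the statement. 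The only genuinely non-formal step is the bookkeeping in the second paragraph, but it presents no real technical difficulty.
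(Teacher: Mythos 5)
Your proof is correct and is essentially the argument behind the cited result: the paper does not reprove this lemma but quotes it from \cite[Lemma 2.6]{Ch17}, where the same Tietze-type elimination is carried out --- discard $A_{31},A_{23},A_{12}$ via $A_{ji}=A_{ij}^{-1}$, recover their order relations from $t_{ij,k}^{-1}=t_{ji,-k}$ together with the invertibility of $A_{ij}$ (guaranteed by the unit constant term of its order relation), and collapse the six triangle relations to $A_{13}A_{32}A_{21}=1$ by cyclic rotation and inversion. Your labelling conventions ($m_{13}=2$, $m_{32}=3$, $m_{21}=m$) also match those used throughout the paper, so nothing is missing.
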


If $w$ is a word in letters $y_i$, we let $T_w$ denote the corresponding word in $Y_i$, an element of 
$A(C)$. For every $x\in \overline{W}$, let us choose a reduced word $w_x$ that represents $x$ in $C$. We notice that $T_{w_x}$ is an element in $A_+(C)$, since $w_x$ is reduced and $\overline{W}$ contains the elements of $C$ of even Coxeter length. 
\begin{exmp}{\rm
	Let $W$ be one of the exceptional groups belonging to the octahedral family, meaning that $\overline{W} \cong \mathfrak{S}_4$. As we mentioned before, $\overline{W}$ is isomorphic to the subgroup of  $B_3$ consisting of elements of even Coxeter length. For the reduced word  $w_x=y_1y_2y_1y_3$, we have that $T_{w_x}=A_{12}A_{13}=A_{21}^{-1}A_{13}$.}\end{exmp}
	
We have the following result \cite[Theorem 2.3(ii)]{ERcoxeter}:
\begin{thm}\label{defo}
	The algebra $A_+(C)$ is generated as an $R^C$-module by the elements $T_{w_x}$, $x\in \overline{W}$.
	\label{thmER}
\end{thm}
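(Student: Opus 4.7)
The plan is to show that the $R^C$-submodule $M$ of $A_+(C)$ generated by $\{T_{w_x}\,|\,x\in\overline{W}\}$ coincides with $A_+(C)$ itself. Since $A_+(C)$ is generated as an $R^C$-algebra by the elements $A_{ij}=Y_iY_j$ (and in fact, by Lemma \ref{presentation}, by just two of them after using the relation $A_{13}A_{32}A_{21}=1$), it is enough to prove that $M$ is stable under left multiplication by each $A_{ij}$. In other words, I need to show that $T_{y_iy_jw_x}=A_{ij}T_{w_x}$ belongs to $M$ for every $i\neq j$ and every $x\in\overline{W}$. I would establish this by induction on the Coxeter length $\ell(x)$ in $C$; the base case $x=1$ is immediate, since $y_iy_j$ is itself an even-length word representing an element of $\overline{W}$.

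For the inductive step, write $x':=y_iy_j\cdot x\in \overline{W}$ and consider the word $y_iy_jw_x$, which represents $x'$ in $C$ and has length $\ell(w_x)+2$. If $y_iy_jw_x$ is not reduced, then a combination of the relations $Y_i^2=1$ and the polynomial identities $\prod_{k=1}^{m_{ij}}(A_{ij}-t_{ij,k})=0$ (which rewrite $A_{ij}^{m_{ij}}$ as an $R^C$-combination of lower powers) allows me to express $T_{y_iy_jw_x}$ as an $R^C$-linear combination of elements $T_w$ with $w$ a word of strictly shorter even length, hence in $M$ by the inductive hypothesis. If instead $y_iy_jw_x$ is a reduced word for $x'$, it need not coincide with the chosen reduced word $w_{x'}$; by Matsumoto's theorem, the two are connected through a sequence of braid moves in $C$. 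The crucial point is that a braid move does not yield an exact equality in the deformed algebra $A_+(C)$, but only equality modulo $R^C$-combinations of $T_w$ with $\ell(w)$ strictly smaller: this is precisely what the polynomial relations of Lemma \ref{presentation} provide, since each braid move amounts to replacing an alternating product $Y_iY_jY_i\cdots$ of length $m_{ij}$ by $Y_jY_iY_j\cdots$, and the discrepancy can be expanded using the polynomial relation in $A_{ij}$.

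I expect the main obstacle to be precisely the second sub-case above, namely the bookkeeping required to show that iterated braid-move rewriting, performed inside $A_+(C)$ rather than in $C$, produces only error terms $T_w$ of smaller even length, so that the induction goes through. This is essentially a deformed version of Matsumoto's theorem compatible with the presentation of $A_+(C)$ given in Lemma \ref{presentation}, and must be carried out separately for each of the three Coxeter types $A_3$, $B_3$, $H_3$ (since the degree of the polynomial relation on $A_{21}$ depends on the type). Once this rewriting is set up, combining it with the inductive hypothesis immediately yields $T_{y_iy_jw_x}\equiv T_{w_{x'}}\pmod{M}$, and hence $T_{y_iy_jw_x}\in M$, which completes the induction and establishes the theorem.
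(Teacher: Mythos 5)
The paper does not prove this statement: it is quoted verbatim from Etingof--Rains \cite[Theorem 2.3(ii)]{ERcoxeter}, so there is no internal proof to compare against. Your sketch is essentially the argument one finds in that reference: a Tits/Matsumoto-style rewriting adapted to the deformation, where the quadratic relations $Y_i^2=1$ are undeformed and each braid move holds only up to a unit and error terms of strictly smaller length. Your identification of the crux is right, and the key computation does work: for $m_{ij}=2p$ one gets $A_{ij}^{p}=(-1)^{m_{ij}-1}\bigl(\textstyle\prod_k t_{ij,k}\bigr)A_{ij}^{-p}+\sum_{|l|<p}c_lA_{ij}^{l}$, so a braid move returns a \emph{unit multiple} of the braid-moved word plus alternating words of strictly fewer letters (and similarly in the odd case after cancelling a $Y_i^2$). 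That unit is harmless for a spanning statement, but you should say explicitly that $T_w\equiv(\text{unit})\,T_{w'}$ rather than $T_w\equiv T_{w'}$ modulo shorter terms, since as stated your "deformed Matsumoto" claim is slightly too strong.

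Two points need tightening. First, the induction as you set it up is on $\ell(x)$ for the statement "$A_{ij}T_{w_x}\in M$", but the error terms produced by the rewriting are $T_v$ for \emph{arbitrary} shorter even words $v$, not elements of the form $A_{ij}T_{w_y}$ or $T_{w_y}$; so the inductive hypothesis you state cannot absorb them directly. The clean formulation is to prove, by induction on the number of letters, that $T_v\in\sum_{\ell(y)\le |v|}R^C\,T_{w_y}$ for \emph{every} even word $v$ (this simultaneously handles the non-reduced case, which also requires braid moves to bring a cancelling pair $Y_kY_k$ adjacent, not just the polynomial identities). Second, the base case is not quite immediate: $y_iy_j$ need not be the \emph{chosen} reduced word $w_{x'}$ for $x'=y_iy_jx$ (e.g.\ if $m_{ij}=2$ the chosen word may be $y_jy_i$, and one must use $(A_{ij}-t_{ij,1})(A_{ij}-t_{ij,2})=0$ to write $A_{ij}\in R^C\cdot 1+R^C\cdot A_{ji}$), so even the base case invokes the rewriting machinery. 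With these adjustments the argument goes through and reproduces the Etingof--Rains proof.
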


We will now see how the algebra $A_+(C)$ relates with the generic Hecke algebra $\mathcal{H}(W)$. Let $X$ be an indeterminate.
Following the notations of \cite[\textsection 2.2 ]{Mar43}, we set 
$R_{\tilde{\ZZ}}(W):=R(W)\otimes_{\ZZ}\tilde{\ZZ}$, $R_{\tilde{\ZZ}}^+(W):=R_{\tilde{\ZZ}}(W)[X,X^{-1}]$, and  $\mathcal{H}_{\tilde{\ZZ}}(W):=\mathcal{H}(W)\otimes_{R(W)}R_{\tilde{\ZZ}}(W)$. We have that the algebra $\mathcal{H}_{\tilde{\ZZ}}(W)$ inherits a structure of $R_{\tilde{\ZZ}}^+(W)$-module, with the action of $X$ described in \cite[Proposition 2.10]{Mar43} or \cite[Proposition 3.1]{Ch17}. Moreover, through this action, the algebra $\mathcal{H}_{\tilde{\ZZ}}(W)$ can be seen as a quotient of the group algebra $R_{\tilde{\ZZ}}^+(W)[\overline{B(W})]$. 
The following result \cite[Proposition 3.2]{Ch17} relates the algebra $A_+(C)$ with the algebra $\mathcal{H}_{\tilde \ZZ}(W)$. 
\begin{prop}\label{ERSUR}
	Let $W$ be an exceptional group of rank $2$, $W \notin \{G_{13},G_{15}\}$. There is a ring morphism $\gru: R^C\twoheadrightarrow R_{\tilde \ZZ}^+(W)$ inducing  a map $ \grC: A_+(C)\otimes_{\gru} R_{\tilde \ZZ}^+(W) \twoheadrightarrow \mathcal{H}_{\tilde{\ZZ}}(W)$ given by $A_{13}\mapsto \bar \gra$, $A_{32}\mapsto \bar \grb$, $A_{21}\mapsto \bar \grg$,  	where $\gra$, $\grb$, $\grg$ denote the generators of  $B(W)$ in the ER presentation.
\end{prop}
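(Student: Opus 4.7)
The plan is to construct $\theta$ by matching parameters, then invoke the universal property of the presentation of $A_+(C)$ given in Lemma \ref{presentation}, with the key verification being that $\bar\gra, \bar\grb, \bar\grg$ satisfy the three defining relations of $A_+(C)$ inside $\mathcal{H}_{\tilde\ZZ}(W)$.

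The first step is, for each exceptional rank-$2$ group $W \notin \{G_{13}, G_{15}\}$, to read off the ER presentation of $B(W)$ recorded in \cite[Appendix A]{Ch17}. It has three generators $\gra, \grb, \grg$ satisfying braid relations together with the crucial centrality property that $\gra\grb\grg \in Z(B(W))$. The associated Hecke algebra inherits the polynomial (Hecke) relations; after tensoring with $R_{\tilde\ZZ}^+(W)$ and recording the action of $X$ from \cite[Proposition 3.1]{Ch17}, these relations take the form
\[
\prod_{k=1}^{2}(\gra-\widetilde{u}_{13,k})=0,\quad
\prod_{k=1}^{3}(\grb-\widetilde{u}_{32,k})=0,\quad
\prod_{k=1}^{m}(\grg-\widetilde{u}_{21,k})=0,
\]
where each $\widetilde{u}_{ij,k} \in R_{\tilde\ZZ}^+(W)$ is a product of some indeterminate $u_{\mathcal{C},k}$ with a power of $X$, and $m \in \{3,4,5\}$ is determined by the family of $W$ (matching the type $A_3, B_3, H_3$ of $C$). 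One then defines $\theta : R^C \twoheadrightarrow R_{\tilde\ZZ}^+(W)$ by $t_{ij,k} \mapsto \widetilde{u}_{ij,k}$; surjectivity is built in.

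The second step is to check the three defining relations of Lemma \ref{presentation} under the assignment $A_{13}\mapsto\bar\gra$, $A_{32}\mapsto\bar\grb$, $A_{21}\mapsto\bar\grg$. The polynomial relations become, after applying $\theta$, exactly the Hecke relations displayed above. The remaining relation $A_{13}A_{32}A_{21}=1$ translates to $\bar\gra\bar\grb\bar\grg=1$ in $\mathcal{H}_{\tilde\ZZ}(W)$; this holds because $\gra\grb\grg \in Z(B(W))$, hence its image in $\overline{B(W)} = B(W)/Z(B(W))$ is trivial, and a fortiori trivial in the quotient $\mathcal{H}_{\tilde\ZZ}(W)$ of $R_{\tilde\ZZ}^+(W)[\overline{B(W)}]$. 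The universal property then yields the $R_{\tilde\ZZ}^+(W)$-algebra morphism $\Psi$, which is surjective because $\bar\gra, \bar\grb, \bar\grg$ are the images of a generating set of $B(W)$.

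The main obstacle is the bookkeeping for non-maximal groups (such as $G_4, G_5, G_6, G_8$, etc.), where the ER generators $\gra, \grb, \grg$ are composite words in the BMR generators rather than braided reflections themselves. For these cases one must carefully determine the powers of $X$ appearing in the $\widetilde{u}_{ij,k}$ by using the explicit change of generators from \cite[Appendix A]{Ch17} and the action of $X$ defined in \cite[Proposition 3.1]{Ch17}, so that the identification of Hecke relations with polynomial relations of $A_+(C)$ is exact. For the maximal groups $G_7, G_{11}, G_{19}$ the ER and BMR presentations coincide and the verification is immediate. The exclusion of $G_{13}$ and $G_{15}$ reflects the fact that for these two groups the product $\gra\grb\grg$ does not lie in $Z(B(W))$ under the ER presentation, so the crucial braid relation $A_{13}A_{32}A_{21}=1$ fails and the argument breaks down.
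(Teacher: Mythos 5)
First, a caveat on the comparison: this paper does not actually prove Proposition \ref{ERSUR} --- it is quoted from \cite[Proposition 3.2]{Ch17} --- so your proposal can only be measured against that proof. Your architecture (define $\gru$ on the $t_{ij,k}$ by sending them to the eigenvalues of $\bar \gra,\bar \grb,\bar \grg$ in $\mathcal{H}_{\tilde{\ZZ}}(W)$, check the three defining relations of the presentation in Lemma \ref{presentation}, and conclude by the universal property) is indeed the approach taken there, and your treatment of the relation $A_{13}A_{32}A_{21}=1$ via the fact that $\mathcal{H}_{\tilde{\ZZ}}(W)$ is a quotient of $R^+_{\tilde{\ZZ}}(W)[\overline{B(W)}]$ is sound.

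There are, however, two genuine problems. First, your explanation of why $G_{13}$ and $G_{15}$ are excluded is wrong, and it is contradicted by the Remark immediately following Proposition \ref{ERSUR}: for those two groups an entirely analogous surjection \emph{does} exist, only onto a version of $A_+(C)$ with specialised parameters \cite[Proposition 3.3]{Ch17}. The relation $\bar \gra\bar \grb\bar \grg=1$ holds in $\overline{B(W)}$ for $G_{13}$ and $G_{15}$ as well; one can check this directly for $G_{13}$ from the isomorphism $\grf_2$ recorded later in this paper, since $s_2\cdot(s_3s_1s_2)\cdot(s_2s_3s_1s_2)^{-1}$ reduces freely to $1$. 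The actual obstruction for $G_{13}$ and $G_{15}$ sits exactly in the step you dismiss as bookkeeping: the images of $\grb$ and/or $\grg$ in $\mathcal{H}_{\tilde{\ZZ}}(W)$ do not satisfy a polynomial of degree $3$ (resp.\ $m$) with independent roots --- a power of the relevant element is a power of $X$, so its eigenvalues come in constrained families --- which forces relations among the $\gru(t_{ij,k})$ and prevents $\gru$ from being defined on the fully generic ring $R^C$ as a surjection with the required properties. This shows that the case-by-case verification of degrees and eigenvalues is not mere bookkeeping: it is precisely where the hypothesis $W\notin\{G_{13},G_{15}\}$ enters, so a proof that defers it has not located the content of the statement. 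Second, and relatedly, surjectivity of $\gru$ is not ``built in'': the elements $\widetilde{u}_{ij,k}$ are monomials of the form $u_{\mathcal{C},j}X^{a}$, and one must verify in each case that these and their inverses generate all of $R^+_{\tilde{\ZZ}}(W)$, i.e.\ that $X$ and the individual $u_{\mathcal{C},j}$ can be recovered; this is again a case-by-case check using the explicit data of \cite[Appendix A]{Ch17}.
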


\begin{rem}\rm
If $W \in \{G_{13},G_{15}\}$, there exists a similar map $\Psi$, but for a version of $A_+(C)$ with specialised parameters  \cite[Proposition 3.3]{Ch17}.
\end{rem}

\subsubsection{Finding bases}\label{subbases}

For every exceptional group of rank 2 we call the surjection $\grC$ the \emph{ER surjection} associated to $W$.
In \cite{Ch17}, the first author used the ER surjection to construct bases for the generic Hecke algebras of the groups $G_4, \dots, G_{16}$, thus proving the  BMR freeness conjecture. We will use here similar techniques in order to construct  parabolic bases for the generic Hecke algebras of the groups $G_4$, $G_5$, $G_6$, $G_7$, $G_8$ and $G_{13}$, thus proving the parabolic freeness conjecture.

 Combining Theorem \ref{defo} and Proposition \ref{ERSUR}, we have that the elements $\grC({T}_{w_x})$, $x\in \overline{W}$,
 generate the
 algebra $\mathcal{H}_{\tilde{\ZZ}}(W)$ as an $R_{\tilde \ZZ}^+(W)$-module. 
 Motivated by this result, we construct some elements inside the braid group $B(W)$ and we prove that their images inside $\mathcal{H}(W)$ form a parabolic basis.
 We construct these braid elements as follows:
\begin{itemize}
\item We represent each element $x\in \overline{W}$ with a word $\tilde{w}_x$ (not necessarily reduced). We choose $\tilde{w}_x$ in such a way so that it can be considered as a word in letters $a_{ij}:=y_iy_j$ with $i \neq j$ (for example, we can take $\tilde{w}_x=y_1y_2y_2y_3=a_{12}a_{23}$, but not  $\tilde{w}_x=y_2y_2y_1y_3$). This is possible, since $\overline{W}$ is the subgroup of $C$ that contains the elements of even Coxeter length. We take $\tilde{1}=1$.

\item Using the fact that $a_{ij}^{-1}=a_{ji}$, we write $\tilde{w}_x$ as a word in letters $a_{13}$, $a_{32}$, $a_{21}$, and their inverses.

\item Inspired by the ER-surjection,  we define an element $\bar{b}_{\tilde{w}_x}$ inside $\overline{B(W)}$ by replacing $a_{13}$, $a_{32}$ and $a_{21}$  with $\bar{\gra}$, $\bar{\grb}$ and $\bar{\grg}$ respectively, where $\gra$, $\grb$, $\grg$ denote the generators of  $B(W)$ in the ER presentation.

\item We can use the group isomorphism $\grf_2$  described in  \cite[Appendix A, Table 2]{Ch17}  to write the elements $\gra$, $\grb$, $\grg$ in the BMR presentation. 
Hence, if we denote by $\Gamma(W)$ the set of generators of $B(W)$ in the BMR presentation, we can also  consider the element $\bar{b}_{\tilde{w}_x}$ as being
a product of elements $\bar{g}$, where $g \in \Gamma(W)$. We denote this last element by $\bar v_x$.

\item Let $\bar{g}_1^{m_1}\bar{g}_2^{m_2}\dots \bar{g}_r^{m_r}$ be the aforementioned factorisation of $\bar v_x$, with $g_i \in \Gamma(W)$  and $m_i\in \ZZ$. 
Let $f: \overline{B(W)}\rightarrow B(W)$ be a set theoretic section such that 
$f(\bar{g}_1^{m_1}\bar{g}_2^{m_2}\ldots \bar{g}_r^{m_r})=f(\bar{g}_1)^{m_1}f(\bar{g}_2)^{m_2}\ldots f(\bar{g}_r)^{m_r}$ and $f(\bar g)=g$, for every $g\in \Gamma(W)$. We set $v_x:=f(\bar v_x)$.
\end{itemize}

Recall now that $Z(B(W))$ is a cyclic group, generated by the element $\boldsymbol{\beta}$.
 Let $z$ denote the image of $\boldsymbol{\beta}$  inside $\mathcal{H}(W)$, while we keep the notation $v_x$ for the image of $v_x$ inside  $\mathcal{H}(W)$. 
 The set 
$\mathcal{C}:=\cup_{x\in \overline{W}} \mathcal{C}_{v_x}$, 
where $\mathcal{C}_{v_x}:=\{z^kv_x \,|\,k=0,1,\ldots,|Z(W)|-1\}$
 is a good candidate for a basis of $\mathcal{H}(W)$ as an $R(W)$-module.
 By construction, $1 \in \mathcal{C}$, so, in order to prove that the set $\mathcal{C}$ is indeed a basis, it is enough to prove that the set 
 $\sum_{h\in \mathcal{C}}R(W)h$ is an ideal of $\mathcal{H}(W)$. This is the approach used in \cite{Ch17}, and it managed to produce bases for all groups of the tetrahedral and the octahedral family.  It has  two main difficulties:
 \begin{itemize}
 \item[(1)] The element $\tilde{w}_x$ has to be chosen appropriately.\smallbreak
 \item[(2)] There are a lot of calculations involved  (see \cite[Appendix B]{Ch17}), which, at least for the moment, cannot be automated.
 \end{itemize}
 The choice of $\tilde{w}_x$  is a product of  experimentation, combined with experience. The second difficulty, when it comes to finding parabolic bases, can be solved for the groups that we studied in \cite{BCCK} and \cite{BCC} with the use of our GAP3 program and the method that we discussed in the last paragraph of  Section \ref{gap3section}.

Finally, there is a third difficulty that we encountered when trying to find a parabolic basis that is also good in the sense of \S \ref{good bases}:
 sometimes the set $\mathcal{C}_{v_x}$ would not work for any of the choices for $\tilde{w}_x$.
 This is a problem similar to the one that we encountered in \cite{BCCK}, when we wanted to find a good basis in order to prove the BMM symmetrising trace conjecture. The solution in both articles turns out to be the same. We have to break the pattern. 
Thus, for some $x \in  \overline{W}$, we  choose another word $\tilde w'_{x}\not=\tilde w_x$ that represents $x$ in $\overline{W}$.
Using the same procedure as before, we define an element $v'_x \in B(W)$. Then, in the definition of $\mathcal{C}$, 
we replace $\mathcal{C}_{v_x}$ by $\left(\mathcal{C}_{v_x} \setminus \{v_x\} \right)\cup \{v_x'\}$. Having a good parabolic basis can be useful for the proof of the parabolic trace conjecture: if $\mathcal{H}(W)$ admits a good parabolic basis with respect to a generator $s$, then $s$ satisfies Condition \eqref{crit}. Therefore, if $\mathcal{H}(W)$ admits a good parabolic basis with respect to every generator (or at least one generator per conjugacy class), then the parabolic trace conjecture holds.

 In the next sections, we will present the choices  of $\tilde{w}_x$ and $\tilde{w}'_x$  that we made in order to produce good  parabolic bases 
  for groups $G_4$, $G_5$, $G_6$, $G_7$, $G_8$ and $G_{13}$. In the case of $G_7$, the choice of these words was also made so that we can  easily obtain parabolic bases for $G_5$ and $G_6$. On the project's webpage \cite{web}, we give more parabolic bases for $G_7$ and $G_8$: some that have a pattern but are not good, and some where the pattern is broken simply so that all elements of the basis are positive words in the generators.

  \begin{rem}\rm
  The GAP3 program expresses every element $h \in \mathcal{C}$ as  a linear combination of elements of the good basis $\mathcal{B}(W)$ that we used to prove the BMM symmetrising trace conjecture in \cite{BCCK} and \cite{BCC}. The coefficient of $1$ in this linear combination is $\tau(h)$. Given the construction of $\mathcal{C}$, it is enough to verify that $\tau(h)=\delta_{1h}$ in order to show that $\mathcal{C}$ satisfies the lifting conjecture (Conjecture \ref{lif}). This amounts to checking that the first row of the change of basis matrix is $(1,0,0,\ldots,0)$ (assuming that $1$ is taken to be the first element of $\mathcal{C}$).
    \end{rem}

   From now on, we will use the notation of \eqref{notationparab}. Namely, for any generator $s$ of $\mathcal{H}(W)$, $\mathfrak{B}^l_s(W)$ (respectively $\mathfrak{B}^r_s(W)$) will denote a basis of $\mathcal{H}(W)$ as a left (respectively right) module over the corresponding parabolic Hecke subalgebra such that
   $$\mathfrak{P}_s^l(W)=\{s^jb\,|\, j=0,\ldots, |W_H|-1,\,b \in \mathfrak{B}_s^l(W)\} \quad \text{and} \quad \mathfrak{P}_s^r(W)=\{bs^j\,|\, j=0,\ldots, |W_H|-1,\,b \in \mathfrak{B}_s^r(W)\}$$
   are respectively left and right parabolic bases of $\mathcal{H}(W)$ with respect to $s$.

\subsubsection{The tetrahedral family} We recall that the tetrahedral family consists of the groups $G_4,\dots, G_7$, with $G_7$ being the maximal group in this family. 
The generic Hecke algebra of each group in this family can be seen as a subalgebra of $\mathcal{H}(G_7)$ for some specialisation of the parameters. We will use this fact in order
to construct parabolic bases for $G_5$ and $G_6$ from those for $G_7$. However, it is much simpler to deal with $G_4$ on its own to start with.
\\\\
\textbf{The case of} $\mathbf{G_4:}$ 
The generic Hecke algebra of $G_4$ is the $R(G_4)$-algebra
$$\mathcal{H}(G_4)=\left\langle s_1,\,s_2 \,\,\,\left|\,\,\, s_1s_2s_1=s_2s_1s_2,\,\,\, \prod_{i=1}^3(s_1-u_i) =  \prod_{i=1}^3(s_2-u_i) =0 \right\rangle\right.$$
where $R(G_4)=\Z[u_1^{\pm 1},u_2^{\pm 1},u_3^{\pm 1}]$
(note that $s_1$ and $s_2$ are conjugate). If we take
$$\mathfrak{B}_{s_1}^l(G_4)=\{1,\;z,\;s_2,\;s_2^2,\;s_2s_1,\;s_2^2s_1,\;s_2s_1^2,\;s_2^2s_1^2\} \quad \text{and} \quad \mathfrak{B}_{s_1}^r(G_4)=\{1,\;z,\;s_2,\;s_2^2,\;s_1s_2,\;s_1s_2^2,\;s_1^2s_2,\;s_1^2s_2^2\}$$
where $z=(s_1s_2)^3$, then $\mathfrak{P}_{s_1}^l(G_4)=\mathfrak{P}_{s_1}^r(G_4)=\mathcal{B}(G_4)$, the basis of \cite[\S 4.1.1]{BCCK} that we used for proving the BMM symmetrising trace conjecture. Given the symmetric role played by the generators $s_1$ and $s_2$ in the presentation of $\mathcal{H}(G_4)$, replacing $s_1$ with $s_2$ and $s_2$ with $s_1$ inside $\mathfrak{P}_{s_1}^l(G_4)$ and $\mathfrak{P}_{s_1}^r(G_4)$ yields $\mathfrak{P}_{s_2}^l(G_4)$ and $\mathfrak{P}_{s_2}^r(G_4)$ respectively.

Other parabolic bases for $\mathcal{H}(G_4)$ with respect to $s_1$ or $s_2$ can be excerpted from \cite[Corollary 3.3]{Mar41} (for example, one is explicitly given in \cite[Proposition 2.1]{Mar46}). Some of them satisfy the lifting conjecture, some of them do not (the one in  \cite[Proposition 2.1]{Mar46} does not). Since the generators of $G_4$ are conjugate, and due to Proposition \ref{conjtogen}, the existence of a parabolic basis with respect to one generator implies the validity of Conjecture \ref{conjpp1}, which is equivalent to the parabolic freeness conjecture.
\\\\
\textbf{The case of } $\mathbf{G_7:}$  The generic Hecke algebra of $G_7$ is the $R(G_7)$-algebra
$$\mathcal{H}(G_7)=\left\langle s_1,\,s_2,\,s_3 \,\,\,\left|\,\,\, s_1s_2s_3=s_2s_3s_1=s_3s_1s_2,\,\,\, \prod_{i=1}^2(s_1-u_{s_1,i}) =  \prod_{j=1}^3(s_2-u_{s_2,j}) =\prod_{k=1}^3(s_3-u_{s_3,k})=0 \right\rangle\right.$$
where $R(G_7)=\Z[u_{s_1,1}^{\pm 1},u_{s_1,2}^{\pm 1}, u_{s_2,1}^{\pm 1},u_{s_2,2}^{\pm 1},u_{s_2,3}^{\pm 1},u_{s_3,1}^{\pm 1},u_{s_3,2}^{\pm 1},u_{s_3,3}^{\pm 1}]$.
If we take
$$\mathfrak{B}_{s_2}^r(G_7)=\left(\{z^k,\; z^ks_3,\; z^ks_3^2,\;z^ks_2s_3^{-1} \,|\,k=0,1,\ldots,11\} \setminus \{ s_2s_3^{-1}\}\right) \cup \{s_2s_3^2\},$$
where $z=s_1s_2s_3$,   then $\mathfrak{P}_{s_2}^r(G_7)=\mathcal{B}(G_7)$, the basis of \cite[\S 4.2.4]{BCCK} that we used for proving the BMM symmetrising trace conjecture. 
Even without replacing $s_2s_3^{-1}$ by $s_2s_3^2$, we obtain a right parabolic basis with respect to $s_2$, which does not satisfy though the lifting conjecture for $\tau$ (this is in fact the basis for $\mathcal{H}(G_7)$ constructed in \cite{Ch17} with the procedure described in \S\ref{subbases}).

We will now use the procedure of \S\ref{subbases} to construct the remaining parabolic bases for $\mathcal{H}(G_7)$, and we will explicitly describe the steps for the right parabolic bases. For $G_7$, we recall that the ER presentation coincides with the BMR presentation, so we have 
$\grC(A_{13})=\bar{s_1}$, $\grC(A_{32})=\bar{s_2}$, $\grC(A_{21})=\bar{s_3}$. 
Moreover, $\overline{G_7}$ is isomorphic to the subgroup of elements of even Coxeter length of the group
$$A_3=\langle y_1,y_2,y_3 \,|\, y_1^2=y_2^2=y_3^2=(y_1y_3)^2=(y_3y_2)^3=(y_2y_1)^3=1\rangle \cong \mathfrak{S}_4.$$
The elements of $\overline{G_7}$ can be represented with the following reduced words in letters $y_1, y_2, y_3$:
\begin{equation}\label{reducedG7}
1,\;y_1y_2,\; y_2y_1,\; y_2y_3,\; y_3y_2,\; y_1y_3,\; y_2y_1y_3y_2,\; y_2y_3y_2y_1,\; y_1y_3y_2y_1, \;
  y_1y_2y_1y_3, \; y_1y_2y_3y_2,\; y_1y_2y_1y_3y_2y_1.
  \end{equation}

First, we will apply the procedure of \S\ref{subbases} to construct a right parabolic basis with respect to $s_1$.
 Thus, for every element $x\in\overline{G_7}$, we will try to find a word  $\tilde w_x$ in letters $y_1,y_2,y_3$ that represents $x$ in $\overline{G_7}$, so that 
 \begin{itemize}
 \item the set $P_1:=\{x\in\overline{G_7}\,|\,\tilde w_x \text{ does not end in } y_1y_3\}$ has $6$ elements;\smallbreak
 \item the set $P_2:=\{x\in\overline{G_7}\,|\,\tilde w_x=\tilde w_{x'}y_1y_3 \text{ for some } x' \in P_1\}$ has $6$ elements.
 \end{itemize}
 We start with the elements that have reduced expressions ending in $y_1y_3$. These belong to $P_2$ and they are:
 \begin{itemize}
 \item $y_1y_3$: We take $\widetilde{y_1y_3}=y_1y_3=a_{13}$. \smallbreak
 \item $y_1y_2y_1y_3$: In order to avoid $a_{21}^{-1}$ appearing, we take $\widetilde{y_1y_2y_1y_3}=y_2y_1y_2y_1y_1y_3=a_{21}^2a_{13}.$\smallbreak
 \item $y_2y_3y_2y_1$: We have $y_2y_3y_2y_1=y_3y_2y_3y_1=y_3y_2y_1y_3$. We take $\widetilde{y_3y_2y_1y_3}=y_3y_2y_1y_3=a_{32}a_{13}$.\smallbreak
 \item $y_1y_2y_1y_3y_2y_1$: We have $y_1y_2y_1y_3y_2y_1=y_2y_1y_2y_3y_2y_1=y_2y_1y_3y_2y_3y_1=y_2y_1y_3y_2y_1y_3$. We take
 $\widetilde{y_2y_1y_3y_2y_1y_3}=y_2y_1y_3y_2y_1y_3=a_{21}a_{32}a_{13}$.
 \end{itemize}
 We multiply the above elements with $y_3y_1$ from the right (using the reduced expression ending in $y_1y_3$ yields reduced expressions) and obtain the following elements of $P_1$:
  \begin{itemize}
 \item $1$: We take $\widetilde{1}=1$. \smallbreak
 \item $y_1y_2$: We take $\widetilde{y_1y_2}=y_2y_1y_2y_1=a_{21}^2$.\smallbreak
 \item $y_3y_2$: We take $\widetilde{y_3y_2}=y_3y_2=a_{32}$.\smallbreak
 \item $y_2y_1y_3y_2$: We take $\widetilde{y_2y_1y_3y_2}=y_2y_1y_3y_2=a_{21}a_{32}$. 
 \end{itemize}
 It remains to deal with the last 4 elements:
 \begin{itemize}
 \item $y_2y_1$: We take $\widetilde{y_2y_1}=y_2y_1=a_{21}$. \smallbreak
 \item $y_2y_3$: We take $\widetilde{y_2y_3}=y_2y_1y_1y_3=a_{21}a_{13}$. \smallbreak
 \item $y_1y_3y_2y_1$: We have $y_1y_3y_2y_1=y_3y_1y_2y_1=y_3y_2y_1y_2$. We take $\widetilde{y_3y_2y_1y_2}=y_3y_2y_1y_2=a_{32}a_{21}^{-1}$. \smallbreak
 \item $y_1y_2y_3y_2$: We have $y_1y_2y_3y_2=y_1y_3y_2y_3=y_3y_1y_2y_3$. We take $\widetilde{y_3y_1y_2y_3}=y_3y_2y_1y_2y_1y_3=a_{32}a_{21}^{-1}a_{13}$.
 \end{itemize}
 We note that in the last two elements, we cannot avoid $a_{21}^{-1}$ if we want to obtain a basis.
 
	We now construct the elements $\bar v_x$ by replacing $a_{13}$, $a_{32}$ and $a_{21}$  with $\bar{s_1}$, $\bar{s_2}$ and $\bar{s_3}$ respectively. 
	We have the following set, consisting of the elements $\bar v_x$:
	$$\{1,\;\bar{s_1},\; \bar{s_2}, \;\bar{s_2}\bar{s_1},\;\bar{s_3}, \;\bar{s_3}\bar{s_1},\; \bar{s_3}^2,\; \bar{s_3}^2\bar{s_1},\; \bar{s_3}\bar{s_2},\; \bar{s_3}\bar{s_2}\bar{s_1},\; \bar{s_2}\bar{s_3}^{-1},\; \bar{s_2}\bar{s_3}^{-1}\bar{s_1}\}.$$
	The set of the elements $v_x$ is now the following:
	$$\{1,\; s_1, \;s_2, \;s_2s_1,\;s_3,\;s_3s_1,\;s_3^2,\; s_3^2s_1,\; s_3s_2,\; s_3s_2s_1,\; s_2s_3^{-1},\; s_2s_3^{-1}s_1\}.$$
	If we take
        $$\mathfrak{B}_{s_1}^r(G_7)=\{z^k, \; z^ks_2, \; z^ks_3,\; z^ks_3^2,\;z^ks_3s_2,\;z^ks_2s_3^{-1} \,|\,k=0,1,\ldots,11\},$$
	 then the GAP3 program yields that the corresponding $\mathfrak{P}^r_{s_1}(G_7)$ is a good right parabolic basis with respect to $s_1$.

We will now construct a right parabolic basis with respect to $s_3$.
For every element $x\in\overline{G_7}$, we will try to find a word  $\tilde w_x$ in letters $y_1,y_2,y_3$ that represents $x$ in $\overline{G_7}$, so that 
 \begin{itemize}
 \item the set $P_1:=\{x\in\overline{G_7}\,|\,\tilde w_x \text{ does not end in } y_2y_1\}$ has $4$ elements;\smallbreak
 \item the set $P_2:=\{x\in\overline{G_7}\,|\,\tilde w_x=\tilde w_{x'}y_2y_1 \text{ for some } x' \in P_1\}$ has $4$ elements;\smallbreak
 \item the set $P_3:=\{x\in\overline{G_7}\,|\,\tilde w_x=\tilde w_{x'}(y_2y_1)^2 \text{ for some } x' \in P_1\}$ has $4$ elements.
 \end{itemize}
 Since $(y_2y_1)^2=y_1y_2$, we start with the elements that have reduced expressions ending in $y_1y_2$. These belong to $P_3$ and they are:
 \begin{itemize}
 \item $y_1y_2$: We take $\widetilde{y_1y_2}=y_2y_1y_2y_1=a_{21}^2$. \smallbreak
 \item $y_1y_3y_2y_1$: We have  $y_1y_3y_2y_1=y_3y_1y_2y_1=y_3y_2y_1y_2$. We take $\widetilde{y_3y_2y_1y_2}=y_3y_2y_2y_1y_2y_1=a_{32}a_{21}^2$. \smallbreak
 \item $y_2y_1y_3y_2$: We have $y_2y_1y_3y_2 = y_2y_3y_1y_2$. In order to avoid $a_{32}^{-1}$ appearing, we take $\widetilde{y_2y_3y_1y_2}=y_3y_2y_3y_2y_2y_1y_2y_1=a_{32}^2a_{21}^2$.\smallbreak
 \item $y_1y_2y_1y_3y_2y_1$: We have $y_1y_2y_1y_3y_2y_1=y_1y_2y_3y_1y_2y_1=y_1y_2y_3y_2y_1y_2$. We take $\widetilde{y_1y_2y_3y_2y_1y_2}=
 y_1y_2y_3y_2y_2y_1y_2y_1=a_{21}^{-1}a_{32}a_{21}^2$.
 \end{itemize}
  We multiply the above elements with $y_1y_2$ from the right and obtain the following elements of $P_2$:
  \begin{itemize}
 \item $y_2y_1$: We take $\widetilde{y_2y_1}=y_2y_1=a_{21}$. \smallbreak
 \item $y_1y_3$: We take $\widetilde{y_1y_3}=y_3y_2y_2y_1=a_{32}a_{21}$.\smallbreak
 \item $y_2y_3y_2y_1$: We take $\widetilde{y_2y_3y_2y_1}=y_3y_2y_3y_2y_2y_1=a_{32}^2a_{21}$.\smallbreak
 \item $y_1y_2y_1y_3$: We take $\widetilde{y_1y_2y_1y_3}=y_1y_2y_3y_2y_2y_1=a_{21}^{-1}a_{32}a_{21}$. 
 \end{itemize}
 Finally, we multiply the first $4$ elements with $y_1y_2y_1y_2=y_2y_1$ from the right (using the reduced expression ending in $y_1y_2$ yields reduced expressions) and obtain the following elements of $P_1$:
 \begin{itemize}
 \item $1$: We take $\widetilde{1}=1$. \smallbreak
 \item $y_3y_2$: We take $\widetilde{y_3y_2}=y_3y_2=a_{32}$. \smallbreak
 \item $y_2y_3$: We take $\widetilde{y_2y_3}=y_3y_2y_3y_2=a_{32}^2$. \smallbreak
 \item $y_1y_2y_3y_2$:  We take $\widetilde{y_1y_2y_3y_2}=y_1y_2y_3y_2=a_{21}^{-1}a_{32}$.
 \end{itemize}

We now construct the elements $v_x$ as before, by replacing $a_{13}$, $a_{32}$ and $a_{21}$  with $s_1$, $s_2$ and $s_3$ respectively (we skip the construction of the intermediate $\bar v_x$, which is implied). We have the following set, consisting of the elements $v_x$:
$$\{1,\; s_3, \;s_3^2, \;s_2, \;s_2s_3,\; s_2s_3^2, \;s_2^2,\;s_2^2s_3,\; s_2^2s_3^2,\; s_3^{-1}s_2,\; s_3^{-1}s_2s_3,\; s_3^{-1}s_2s_3^2\}.$$
If we take
$$\mathfrak{B}_{s_3}^r(G_7)=\{z^k,\; z^ks_2,\; z^ks_2^2,\;z^ks_3^{-1}s_2 \,|\,k=0,1,\ldots,11\},$$
 then the GAP3 program yields that the corresponding $\mathfrak{P}^r_{s_3}(G_7)$ is a good right parabolic basis with respect to $s_3$.

Using now similar techniques, we have found good left parabolic bases $\mathfrak{P}_{s_i}^l(G_7)$ for $i=1,2,3$. These correspond to:
$$\begin{array}{lcl}
\mathfrak{B}^l_{s_1}(G_7)&=&\{z^k,\;z^ks_2,\;z^ks_3,\;z^ks_2^2,\; z^ks_3s_2,\;z^ks_2^{-1}s_3\,|\,
\; k=0,\dots, 11\} ,\smallbreak\smallbreak\smallbreak\\
\mathfrak{B}^l_{s_2}(G_7)&=&\left(\{z^k,\; z^ks_3,\; z^ks_3^2, \; z^ks_3^{-1}s_2\,|\,k=0,\dots, 11\} \setminus \{s_3^{-1}s_2\}\right) \cup \{s_3^2s_2\},\smallbreak\smallbreak\smallbreak\\
\mathfrak{B}^l_{s_3}(G_7)&=&\{z^k,\; z^ks_2,\; z^ks_2^2, \;z^ks_2s_3^{-1}\,|\,k=0,\dots, 11\}.\end{array}$$

We now observe that the bases
 $\mathfrak{P}^r_{s_1}(G_7)$ and $\mathfrak{P}^l_{s_1}(G_7)$ are both examples of a good basis that contains the subset
$$\{z^k, \; z^{l}s_1,\; z^ks_2,\; z^ks_3 \,|\,k=0,1,\ldots,11,\,\,l=0,1,\ldots,5\}.$$
The existence of such a basis was assumed by Malle for the determination of the Schur elements with respect to the canonical symmetrising trace  for all groups of the tetrahedral family  in \cite{Ma2}.  We have thus obtained the following:

\begin{thm}
The Schur elements with respect to the canonical symmetrising trace  for groups $G_4$, $G_5$, $G_6$ and $G_7$ are the ones computed in  \cite[\S 4B]{Ma2}.
\end{thm}

Finally, we will use the left and right parabolic bases for $\mathcal{H}(G_7)$ described above in order to construct left and right parabolic bases for $\mathcal{H}(G_5)$ and $\mathcal{H}(G_6)$. We first need to prove the following lemma about the element $z$:

\begin{lem}\label{center}
	For the central element $z=s_1s_2s_3$ of $\mathcal{H}(G_7)$ and for every  $k\in \NN$ we have:
	\begin{itemize}
		\item[(a)]$z^k=s_1^k(s_2s_3)^k=(s_2s_3)^ks_1^k$.\smallbreak
		\item[(b)]$z^k=s_2^k(s_3s_1)^k=(s_3s_1)^ks_2^k$.\smallbreak
		\item[(c)]$z^k=s_3^k(s_1s_2)^k=(s_1s_2)^ks_3^k$.
	\end{itemize}
\end{lem}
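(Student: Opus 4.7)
The plan is very short: everything follows from the braid relation
$$s_1s_2s_3 = s_2s_3s_1 = s_3s_1s_2$$
that defines $z$ in $\mathcal{H}(G_7)$. Rewriting these three expressions as $s_1(s_2s_3)=(s_2s_3)s_1$, $s_2(s_3s_1)=(s_3s_1)s_2$ and $s_3(s_1s_2)=(s_1s_2)s_3$, I immediately get that each generator $s_i$ commutes with the product of the remaining two generators taken in the appropriate order.

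First I would record these three commutations as a small preliminary observation. Once $s_1$ commutes with $s_2s_3$, the factors in any product made of $s_1$'s and copies of $s_2s_3$ can be reordered arbitrarily; in particular, by a trivial induction on $k$,
$$z^k = (s_1 \cdot s_2s_3)^k = s_1^k (s_2s_3)^k = (s_2s_3)^k s_1^k,$$
which is statement (a). Statements (b) and (c) are obtained in exactly the same way, starting respectively from the decompositions $z = s_2 \cdot (s_3s_1)$ and $z = s_3 \cdot (s_1s_2)$ provided by the braid relation.

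There is no real obstacle here: the whole lemma is a direct consequence of the braid relation combined with the fact that commuting elements can be freely rearranged within a product. The only thing to be a little careful with is that the braid relation must be used to pair up the factors correctly (\emph{i.e.}\ in each of (a), (b), (c) one must choose the right cyclic permutation among the three equal expressions $s_1s_2s_3$, $s_2s_3s_1$, $s_3s_1s_2$). No appeal to the Hecke relations on the $s_i$ is needed, so in fact the same statement already holds at the level of the braid group $B(G_7)$.
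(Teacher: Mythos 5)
Your proposal is correct and follows essentially the same route as the paper: the paper also deduces everything from the braid relation $s_1s_2s_3=s_2s_3s_1=s_3s_1s_2$ and runs a short induction on $k$ (phrased via the centrality of $z^{k-1}$ rather than via the commutation $s_1(s_2s_3)=(s_2s_3)s_1$, but the content is identical). Your closing remark that the identities already hold in the braid group $B(G_7)$ is also accurate, since no Hecke relation is used.
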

\begin{proof}
	We prove the three properties by induction to $k$. For $k=1$, these are given by the braid relation $s_1s_2s_3=s_2s_3s_1=s_3s_1s_2$. 
	We now assume, for (a), that $z^{k-1}=s_1^{k-1}(s_2s_3)^{k-1}$. We have $z^k=z^{k-1}s_1s_2s_3=s_1z^{k-1}s_2s_3= s_1s_1^{k-1}(s_2s_3)^{k-1}s_2s_3=s_1^k(s_2s_3)^k$. Since $s_1$ commutes with $s_2s_3$, we also have $z^k=(s_2s_3)^ks_1^k$. Similarly we prove (b) and (c).
\end{proof}

\noindent
\textbf{The case of} $\mathbf{G_6:}$ 
 Let $\theta: R(G_7)\mapsto \Z[u_{s_1,0}^{\pm 1},u_{s_1,1}^{\pm 1}, u_{s_3,0}^{\pm 1},u_{s_3,1}^{\pm 1},u_{s_3,2}^{\pm 1}]\cong R(G_6)$ be a specialisation, defined by 
$$(u_{s_1,1},u_{s_1,2};\; u_{s_2,1},u_{s_2,2},u_{s_2,3};\; u_{s_3,1},u_{s_3,2},u_{s_3,3})\mapsto(u_{s_1,1},u_{s_1,2};\; 1,\zeta_3,\zeta_3^2;\;u_{s_3,1},u_{s_3,2},u_{s_3,1}),$$
where $\zeta_3$ denotes a primitive cubic root of unity.
Let $A:=\mathcal{H}(G_7)\otimes_{\theta} R(G_6)$, that is,
$$A=\left\langle s_1,\,s_2,\,s_3 \,\,\,\left|\,\,\, s_1s_2s_3=s_2s_3s_1=s_3s_1s_2,\,\,\,s_2^3=1,\; \prod_{i=1}^2(s_1-u_{s_1,i}) =  \prod_{k=1}^3(s_3-u_{s_3,k})=0 \right\rangle\right.,$$ 
and let $\bar A$  be the subalgebra of $A$ generated by $s_1$ and $s_3$. Since the BMR freeness conjecture holds for all exceptional groups of the tetrahedral family by \cite{Ch17}, we have that $\bar{A}$ is isomorphic to $\mathcal{H}(G_6)$  \cite[Proposition 4.2 \& Table 4.6]{Ma2} and that
\begin{equation}A=\bigoplus\limits_{j=0}^2s_2^j\bar A=\bigoplus\limits_{j=0}^2\bar As_2^j
\label{g6}
\end{equation}
(see \cite[\S 4]{Ma2} or \cite[Appendix A.1]{chlou}).

Let us now explain how we obtain a right parabolic basis $\mathfrak{P}_{s_1}^r(G_6)$ from $\mathfrak{P}_{s_1}^r(G_7)$. 
 Let $A_1$ denote the subalgebra of $A$ generated by $s_1$.
The algebra $A$ is generated as a right $A_1$-module by  the set
$$\mathfrak{B}_{s_1}^r(G_7)=\{z^k, \; z^ks_2,\; z^ks_3,\; z^ks_3^2,\;z^ks_3s_2,\;z^ks_2s_3^{-1} \,|\,k=0,1,\ldots,11\},$$
where $z=s_1s_2s_3$.  Using the fact that $s_2^3=1$, Lemma \ref{center} yields:
$$z^k=\begin{cases}
(s_3s_1)^k, &\text { for } k=0,3,6,9,\\
(s_3s_1)^ks_2, &\text { for } k=1,4,7,10,\\
(s_3s_1)^ks_2^2, &\text { for } k=2,5,8,11.\\
\end{cases}$$
Since the sum in \eqref{g6} is direct, the subalgebra $\bar A$ of $A$ is generated as a right $A_1$-module by the set
$$\mathfrak{B}:=\{(s_3s_1)^k, \;(s_3s_1)^ks_3,\;(s_3s_1)^ks_3^2 \,|\, k=0,3,6,9\}\cup \{(s_3s_1)^k,\;s_3(s_3s_1)^k,\;(s_3s_1)^ks_3^{-1} \,|\, k=2,5,8,11\}.
$$
As a consequence, the set $\{bs_1^i\,|\, i=0,1,\,b \in \mathfrak{B}\}$ generates $\bar A$  as an $R(G_6)$-module.
Now, the rank of $\mathcal{H}(G_6) \cong \bar A$ as a free $R(G_6)$-module is equal to $|G_6|=48=2|\mathfrak{B}|$. This implies that the  set $\mathfrak{B}$ is linearly independent, and it is the set $\mathfrak{B}^r_{s_1}(G_6)$ that we are looking for. Moreover, since $\mathfrak{P}^r_{s_1}(G_6) \subseteq \mathfrak{P}^r_{s_1}(G_7)$, and the canonical symmetrising trace on $\mathcal{H}(G_6)$ is the restriction of the canonical symmetrising trace on $\mathcal{H}(G_7)$ \cite[Lemme 4.3]{Ma2}, the right parabolic basis $\mathfrak{P}^r_{s_1}(G_6)$ is also a good basis.

Similarly, we obtain good parabolic bases by taking:
$$\begin{array}{lcl}
\mathfrak{B}^l_{s_1}(G_6)&=&
\{(s_3s_1)^k,\;(s_3s_1)^ks_3\,|\,k=0,1,3,4,6,7,9,10\} \cup
\{(s_3s_1)^k,\;  s_3(s_3s_1)^k\,|\,k=2,5,8,11\},
\smallbreak\smallbreak\smallbreak\\
\mathfrak{B}^r_{s_3}(G_6)&=&
\{(s_3s_1)^k \,|\,k=0,\dots,11\} \cup  \{s_3^{-1}(s_3s_1)^k \,|\,k=2,5,8,11\},\smallbreak\smallbreak\smallbreak\\
\mathfrak{B}^l_{s_3}(G_6)&=&
\{(s_3s_1)^k \,|\,k=0,\dots,11\}\cup 
\{(s_3s_1)^ks_3^{-1} \,|\,k=2,5,8,11\}.
\end{array}
$$\medskip

\noindent
\textbf{The case of} $\mathbf{G_5:}$ 
Let $\theta: R(G_7)\mapsto \Z[u_{s_2,1}^{\pm 1},u_{s_2,2}^{\pm 1},u_{s_2,3}^{\pm 1}, u_{s_3,1}^{\pm 1},u_{s_3,2}^{\pm 1},u_{s_3,3}^{\pm 1}]\cong R(G_5)$ be a specialisation, defined by 
$$(u_{s_1,1},u_{s_1,2};\; u_{s_2,1},u_{s_2,2},u_{s_2,3};\; u_{s_3,1},u_{s_3,2},u_{s_3,3})\mapsto(1,-1;\; u_{s_2,1},u_{s_2,2},u_{s_2,3};\;u_{s_3,1},u_{s_3,2},u_{s_3,3}).$$
Let $A:=\mathcal{H}(G_7)\otimes_{\theta} R(G_5)$, that is,
$$A=\left\langle s_1,\,s_2,\,s_3 \,\,\,\left|\,\,\, s_1s_2s_3=s_2s_3s_1=s_3s_1s_2,\,\,\,s_1^2=1,\; \prod_{j=1}^3(s_2-u_{s_2,j}) =  \prod_{k=1}^3(s_3-u_{s_3,k})=0 \right\rangle\right.,$$ and let $\bar A$  be the subalgebra of $A$ generated by $s_2$ and $s_3$. 
Since the BMR freeness conjecture holds for all exceptional groups of the tetrahedral family by \cite{Ch17}, we have that $\bar{A}$ is isomorphic to $\mathcal{H}(G_5)$  \cite[Proposition 4.2 \& Table 4.6]{Ma2} and that
\begin{equation}A=\bigoplus\limits_{i=0}^1s_1^i\bar A=\bigoplus\limits_{i=0}^1\bar As_1^i
\label{g5}
\end{equation}
(see \cite[\S 4]{Ma2} or \cite[Appendix A.1]{chlou}).

As in the case of $G_6$, we will construct parabolic bases for $\mathcal{H}(G_5)$ from the ones we constructed for $\mathcal{H}(G_7)$. We will use the example of
the right parabolic basis with respect to $s_2$ to illustrate our method.

 Let $A_2$ denote the subalgebra of $A$ generated by $s_2$.
The algebra $A$ is generated as a right $A_2$-module by  the set
$$\mathfrak{B}_{s_2}^r(G_7)=\left(\{z^k,\; z^ks_3,\; z^ks_3^2,\;z^ks_2s_3^{-1} \,|\,k=0,1,\ldots,11\} \setminus \{ s_2s_3^{-1}\}\right) \cup \{s_2s_3^2\},$$
where $z=s_1s_2s_3$.  Using the fact that $s_1^2=1$, Lemma \ref{center} yields:
\begin{equation}\label{gg5}
z^k=\begin{cases}
(s_2s_3)^k,&\text{ for } k=0,2,4,6,8,10,\\
s_1(s_2s_3)^k,&\text{ for } k=1,3,5,7,9,11.
\end{cases}
\end{equation}
Since the sum in \eqref{g5} is direct, the subalgebra $\bar A$ of $A$ is generated as a right $A_2$-module by the set
$$\mathfrak{B}:=\left(\{(s_2s_3)^k,\; (s_2s_3)^ks_3,\; (s_2s_3)^ks_3^2,\;(s_2s_3)^ks_2s_3^{-1} \,|\,k=0,2,4,6,8,10\} \setminus \{ s_2s_3^{-1}\}\right) \cup \{s_2s_3^2\}.$$
As a consequence, the set $\{bs_2^j\,|\, j=0,1,2,\,b \in \mathfrak{B}\}$ generates $\bar A$  as an $R(G_5)$-module.
Now, the rank of $\mathcal{H}(G_5) \cong \bar A$ as a free $R(G_5)$-module is equal to $|G_5|=72=3|\mathfrak{B}|$. This implies that the  set $\mathfrak{B}$ is linearly independent, and it is the set $\mathfrak{B}^r_{s_2}(G_5)$ that we are looking for. Moreover, since $\mathfrak{P}^r_{s_2}(G_5) \subseteq \mathfrak{P}^r_{s_2}(G_7)$, and the canonical symmetrising trace on $\mathcal{H}(G_5)$ is the restriction of the canonical symmetrising trace on $\mathcal{H}(G_7)$ \cite[Lemme 4.3]{Ma2}, the right parabolic basis $\mathfrak{P}^r_{s_2}(G_5)$ is also a good basis.

Similarly, we obtain good parabolic bases by taking:
$$\begin{array}{lcl}\mathfrak{B}^l_{s_2}(G_5)&=&\left(\{(s_2s_3)^k,\; (s_2s_3)^ks_3,\; (s_2s_3)^ks_3^2,\; (s_2s_3)^ks_3^{-1}s_2\,|\,k=0,2,4,6,8,10\} \setminus \{s_3^{-1}s_2\}\right) \cup \{s_3^2s_2\},\smallbreak\smallbreak\smallbreak\\
\mathfrak{B}_{s_3}^r(G_5)&=&\{(s_2s_3)^k,\; (s_2s_3)^ks_2,\; (s_2s_3)^ks_2^2,\;(s_2s_3)^ks_3^{-1}s_2 \,|\,k=0,2,4,6,8,10\},\smallbreak\smallbreak\smallbreak\\
\mathfrak{B}^l_{s_3}(G_5)&=&\{(s_2s_3)^k,\; (s_2s_3)^ks_2,\; (s_2s_3)^ks_2^2,\; (s_2s_3)^ks_2s_3^{-1}\,|\,k=0,2,4,6,8,10\}.
\end{array}
$$
\medskip

\subsubsection{The octahedral family}\label{813}
The octahedral family consists of the groups $G_8,\dots,G_{15}$, with the maximal group being $G_{11}$. Since we are not able yet to deal with group $G_{11}$, we cannot apply here the same method that we used for $G_5$ and $G_6$, so we will deal with $G_8$ and $G_{13}$ on their own.
For both groups though, as for all the groups of the octahedral family, the associated group $\overline{G_n}$ ($n=8,\ldots,15$) is isomorphic to the subgroup of elements of even Coxeter length of the group
$$B_3=\langle y_1,y_2,y_3 \,|\, y_1^2=y_2^2=y_3^2=(y_1y_3)^2=(y_3y_2)^3=(y_2y_1)^4=1\rangle \cong (\Z/2\Z)^3 \rtimes \mathfrak{S}_3.$$
The elements of $\overline{G_n}$ can be represented with the following reduced words in letters $y_1, y_2, y_3$:
\begin{equation}\label{reducedG11}
\begin{array}{c}
1, \; y_3 y_2 , \; y_2 y_1 , \; y_2 y_3 , \; y_1 y_3 , \; y_1 y_2 , \; y_3 y_2 y_1 y_2 , \;
   y_2 y_1 y_2 y_3 , \; y_2 y_3 y_2 y_1 , \; y_2 y_1 y_3 y_2 , \; y_2 y_1 y_2 y_1 , \;
   y_1 y_3 y_2 y_1,\\  y_1 y_2 y_1 y_3 , \; y_1 y_2 y_3 y_2 ,  \;
   y_2 y_3 y_2 y_1 y_2 y_3 , \;
   y_2 y_1 y_2 y_3 y_2 y_1 ,\;  y_2 y_1 y_3 y_2 y_1 y_2 ,\;  y_1 y_3 y_2 y_1 y_2 y_3 , \;
   y_1 y_2 y_1 y_2 y_3 y_2 ,\\  y_1 y_2 y_1 y_3 y_2 y_1 , \; y_1 y_2 y_3 y_2 y_1 y_2 , \;
   y_2 y_1 y_2 y_3 y_2 y_1 y_2 y_3 , \; y_1 y_2 y_1 y_2 y_3 y_2 y_1 y_2 , \;
   y_1 y_2 y_1 y_3 y_2 y_1 y_2 y_3. 
   \end{array}
  \end{equation}
  
  \noindent
\textbf{The case of} $\mathbf{G_8:}$  The generic Hecke algebra of $G_8$ is the $R(G_8)$-algebra
$$\mathcal{H}(G_8)=\left\langle s_1,\,s_2 \,\,\,\left|\,\,\, s_1s_2s_1=s_2s_1s_2,\,\,\, \prod_{i=1}^4(s_1-u_i) =  \prod_{i=1}^4(s_2-u_i) =0 \right\rangle\right..$$
where $R(G_8)=\Z[u_1^{\pm 1},u_2^{\pm 1},u_3^{\pm 1},u_4^{\pm 1}]$ (note that $s_1$ and $s_2$ are conjugate).
If we take
$$\mathfrak{B}_{s_1}^l(G_8)=\{z^k, \;z^ks_2, \;z^ks_2^2,\;z^ks_2s_1,\; z^ks_2s_1^2,\; z^ks_2s_1^3 \,|\,k=0,1,2,3\} ,$$
where $z=(s_1s_2)^3$,   then $\mathfrak{P}_{s_1}^l(G_8)=\mathcal{B}(G_8)$, the basis of \cite[\S 4.2.1]{BCCK} that we used for proving the BMM symmetrising trace conjecture.

We will now use the procedure of \S\ref{subbases} to construct  $\mathfrak{P}_{s_1}^r(G_8)$. For $G_8$, the isomorphism $\grf_2$ between the ER and BMR presentation is given by:
\begin{center}
 $\grf_2(\alpha)=(s_1s_2s_1)^{-1}$, $\grf_2(\beta)=s_1s_2$, and
 $\grf_2(\gamma)=s_1$.
 \end{center}
Hence, $s_1=\grf_2(\gamma)$ and $s_2=\grf_2(\gamma^{-1}\beta)$. We recall that $\grC(A_{21})=\bar \gamma$. Since we look for a right parabolic basis with respect to $s_1$, 
for every element $x\in \overline{G_8}$, we will try to find a word $\tilde w_x$ in letters $y_1, y_2$ and $y_3$ that represents $x$ in $\overline{G_8}$, so that
\begin{itemize}
 \item the set $P_1:=\{x\in\overline{G_8}\,|\,\tilde w_x \text{ does not end in } y_2y_1\}$ has $6$ elements;\smallbreak
 \item the set $P_2:=\{x\in\overline{G_8}\,|\,\tilde w_x=\tilde w_{x'}y_2y_1 \text{ for some } x' \in P_1\}$ has $6$ elements;\smallbreak
 \item the set $P_3:=\{x\in\overline{G_8}\,|\,\tilde w_x=\tilde w_{x'}(y_2y_1)^2 \text{ for some } x' \in P_1\}$ has $6$ elements;\smallbreak
 \item the set $P_4:=\{x\in\overline{G_8}\,|\,\tilde w_x=\tilde w_{x'}(y_2y_1)^3  \text{ for some } x' \in P_1\}$ has $6$ elements.\smallbreak
 \end{itemize}
 Since $(y_2y_1)^3=y_1y_2$, we start with some  elements that have reduced expressions ending in $y_1y_2$ and belong to $P_4$. These are:
\begin{itemize}
\item $y_1y_2$: We take $\widetilde{y_1y_2}=y_2y_1y_2y_1y_2y_1=a_{21}^3$.\smallbreak
\item $y_3y_2y_1y_2$: We take $\widetilde{y_3y_2y_1y_2}=y_3y_2y_2y_1y_2y_1y_2y_1=a_{32}a_{21}^3$.\smallbreak
\item  $y_1y_2y_3y_2y_1y_2$:  We take $\widetilde{y_1y_2y_3y_2y_1y_2}=y_1y_2y_3y_2y_2y_1y_2y_1y_2y_1=a_{21}^{-1}a_{32}a_{21}^3$.\smallbreak
\end{itemize} 
 We multiply the above elements with $y_1y_2$ from the right and obtain the following elements of $P_3$:
  \begin{itemize}
 \item $y_2y_1y_2y_1$: We take $\widetilde{y_2y_1y_2y_1}=y_2y_1y_2y_1=a_{21}^2$. \smallbreak
 \item $y_1y_3y_2y_1$: We take $\widetilde{y_1y_3y_2y_1}=y_3y_2y_2y_1y_2y_1=a_{32}a_{21}^2$.\smallbreak
\item  $y_1y_2y_1y_3y_2y_1$:  We take $\widetilde{y_1y_2y_1y_3y_2y_1}=y_1y_2y_3y_2y_2y_1y_2y_1=a_{21}^{-1}a_{32}a_{21}^2$.\smallbreak
 \end{itemize}
 We then multiply the first $3$ elements with $(y_1y_2)^2=(y_2y_1)^2$ from the right and obtain the following elements of $P_2$:
  \begin{itemize}
 \item $y_2y_1$: We take $\widetilde{y_2y_1}=y_2y_1=a_{21}$. \smallbreak
 \item $y_1y_3$: We take $\widetilde{y_1y_3}=y_3y_2y_2y_1=a_{32}a_{21}$.\smallbreak
\item  $y_1y_2y_1y_3$:  We take $\widetilde{y_1y_2y_1y_3}=y_1y_2y_3y_2y_2y_1=a_{21}^{-1}a_{32}a_{21}$.\smallbreak
 \end{itemize}
 Finally, we multiply the first $3$ elements with $(y_1y_2)^3=y_2y_1$ from the right (using the reduced expression ending in $y_1y_2$ yields reduced expressions) and obtain the following elements of $P_1$:
 \begin{itemize}
 \item $1$: We take $\widetilde{1}=1$. \smallbreak
 \item $y_3y_2$: We take $\widetilde{y_3y_2}=y_3y_2=a_{32}$. \smallbreak
 \item  $y_1y_2y_3y_2$:  We take $\widetilde{y_1y_2y_3y_2}=y_1y_2y_3y_2=a_{21}^{-1}a_{32}$.\smallbreak
 \end{itemize}
For the remaining elements we choose the following words $\tilde w_x$:
 \begin{itemize}
\item $(y_1y_2y_3y_2)^2$, \;$(y_1y_2y_3y_2)^2(y_2y_1)$, \;$(y_1y_2y_3y_2)^2(y_2y_1)^2$, \; $(y_1y_2y_3y_2)^2(y_2y_1)^3$. \smallbreak
\item  $(y_1y_2y_3y_2)^3$, \; $(y_1y_2y_3y_2)^3(y_2y_1)$, \;$(y_1y_2y_3y_2)^3(y_2y_1)^2$, \; $(y_1y_2y_3y_2)^3(y_2y_1)^3$.\smallbreak
\item $y_2y_1y_2y_3y_2y_1$, \; $y_2y_1y_2y_3y_2y_1(y_2y_1)$, \; $y_2y_1y_2y_3y_2y_1(y_2y_1)^2$, \;$y_2y_1y_2y_3y_2y_1(y_2y_1)^3$.\smallbreak
 \end{itemize}

 We now construct the elements $v_x$ as before, by replacing in every word $\tilde w_x$ the products $y_1y_3$, $y_3y_2$ and $y_2y_1$ with $(s_1s_2s_1)^{-1}$, $s_1s_2$ and $s_1$ respectively (using also inverses, if necessary). We have the following set, consisting of the elements $v_x$:
$$\left\{
\begin{matrix}
\begin{array}{cl}
1,\;s_1,\; s_1^2, \;s_1^3,\; s_2,\; s_2s_1,\; s_2s_1^2,\; s_2s_1^3,\; s_2^2,\; s_2^2s_1,\; s_2^2s_1^2,\; s_2^2s_1^3,\;
s_2^3, \;s_2^3s_1,\; s_2^3s_1^2,\; s_2^3s_1^3,\;\;&\\   \;s_1s_2,\; s_1s_2s_1,\; s_1s_2s_1^2,\; s_1s_2s_1^3, \;s_1s_2^{-1},\;s_1s_2^{-1}s_1,\; s_1s_2^{-1}s_1^2,\; s_1s_2^{-1}s_1^3
\end{array}
\end{matrix}\right\}$$
If we take
$\tilde{\mathfrak{B}}_{s_1}^r(G_8)=\{z^k, z^ks_2, z^ks_2^2,z^ks_2^3,z^ks_1s_2,z^ks_1s_2^{-1} \,|\,k=0,1,2,3\}$,
then the GAP3 program yields that the corresponding $\tilde{\mathfrak{P}}^r_{s_1}(G_8)$ is a right parabolic basis with respect to $s_1$, which does not satisfy though the lifting conjecture. 
More precisely, $\tau(s_1s_2^{-1}s_1^3)\not=0$. Therefore, if we want our basis to also satisfy the lifting conjecture, we need to replace the element $s_1s_2^{-1}s_1^3$. Since we want to preserve the  parabolic structure of the basis, we need to replace the elements $s_1s_2^{-1}$, $s_1s_2^{-1}s_1$ and $s_1s_2^{-1}s_1^2$ as well. All the aforementioned elements come from the elements $x \in \overline{G_8}$  
  for which we have chosen the words $\tilde{w}_x$ as follows: $y_2y_1y_2y_3y_2y_1(y_2y_1)^m$ (and, hence, $v_x=s_1s_2^{-1}s_1^{-1}s_1s_1^m=s_1s_2^{-1}s_1^m)$, for $m=0,1,2,3$. We notice that 
  $y_2y_1y_2y_3y_2y_1=y_3y_2y_1y_2y_3y_2y_1y_2y_3y_2$. Therefore, we choose the following words $\tilde w'_{x}$: $y_3y_2y_1y_2y_3y_2y_1y_2y_3y_2(y_1y_2)^m$.
  The corresponding elements $v'_x$ are: $s_1s_2s_1^{-1}s_1s_2s_1^{-1}s_1s_2s_1^m=s_1s_2^3s_1^m$, for $m=0,1,2,3$.
If now we take
$$\mathfrak{B}_{s_1}^r(G_8)=\left(\{z^k,\; z^ks_2,\; z^ks_2^2,\;z^ks_2^3,\;z^ks_1s_2,\;z^ks_1s_2^{-1} \,|\,k=0,1,2,3\} \setminus \{s_1s_2^{-1}\}\right) \cup \{s_1s_2^3\},$$
then the corresponding $\mathfrak{P}^r_{s_1}(G_8)$ is a good right parabolic basis with respect to $s_1$.

Using now similar techniques, we  have found good  parabolic bases with respect to $s_2$. These correspond to:
$$\begin{array}{lcl}\mathfrak{B}^l_{s_2}(G_8)&=&\{z^k,\;z^ks_1,\;z^ks_1^2,\;z^ks_1^3,\;z^ks_1s_2,\;z^ks_1s_2^{-1}\,|\,
\; k=0,1,2,3\},\smallbreak\smallbreak\smallbreak\\
\mathfrak{B}^r_{s_2}(G_8)&=&\left(\{z^k,\;z^ks_1,\;z^ks_1^2,\;z^ks_1^3,\;z^ks_2s_1,\;z^ks_2s_1^{-1}\,|\,k=0,1,2,3\} \setminus \{s_2s_1^{-1}\}\right) \cup \{s_2s_1^3\}.\end{array}$$
 One can notice that $\mathfrak{B}^r_{s_2}(G_8)$ can be obtained from $\mathfrak{B}^r_{s_1}(G_8)$ by  replacing everywhere $s_1$ with $s_2$ and $s_2$ with $s_1$. In fact, given the symmetric role played by the generators $s_1$ and $s_2$ in the presentation of $\mathcal{H}(G_8)$, replacing $s_1$ with $s_2$ and $s_2$ with $s_1$ inside a good parabolic basis with respect to one generator yields a good parabolic basis with respect to the other. \bigskip

\noindent
\textbf{The case of} $\mathbf{G_{13}:}$
The generic Hecke algebra of $G_{13}$ is the $R(G_{13})$-algebra
$$\mathcal{H}(G_{13}) =\left \langle\, {s_1}, {s_2}, {s_3}\,\left|\,\begin{array}{c}
 {s_2} {s_3} {s_1}{s_2}={s_3}{s_1} {s_2}{s_3},\,{s_1} {s_2}{s_3}{s_1}{s_2}=
{s_3}{s_1} {s_2}{s_3}{s_1},\,\\ \smallbreak
  \prod_{i=1}^2(s_1-u_{{s_1},i})=\prod_{j=1}^2(s_2-u_{{s_2},j})=
\prod_{j=1}^2(s_3-u_{{s_2},j})=0
\end{array}
\,\right\rangle\right..$$
where $R(G_{13})=\Z[u_{s_1,1}^{\pm 1},u_{s_1,2}^{\pm 1},u_{s_2,1}^{\pm 1},u_{s_2,2}^{\pm 1}]$ (notice that here $s_2$ and $s_3$ are conjugate).
If we take
$$\mathfrak{B}_{s_2}^r(G_{13}) =
\left\{
\begin{matrix}
\begin{array}{c|l}
z^k,\; z^ks_3,\; z^ks_1,\; z^ks_2s_1,\; z^ks_1s_3,\; z^ks_3s_1,\;
z^ks_2s_3, 
\\ \smallbreak
z^ks_2s_1s_3,\;z^ks_2s_3s_1,\;
z^ks_1s_2s_1,\;z^ks_1s_2s_3,\;z^ks_3s_2s_1
\end{array}
\end{matrix}\,\,\,k=0,1,2,3 \right\},
$$
where $z=(s_1s_2s_3)^3$,   then $\mathfrak{P}_{s_2}^r(G_{13})=\mathcal{B}(G_{13})$, the basis of \cite{BCC} that we used for proving the BMM symmetrising trace conjecture.

We now wish to use the procedure of \S\ref{subbases} to construct  the rest of the parabolic bases. For $G_{13}$, the isomorphism $\grf_2$ between the ER and BMR presentation is given by:
\begin{center}
 $\grf_2(\alpha)=s_2$, $\grf_2(\beta)=s_3s_1s_2$, and
 $\grf_2(\gamma)=(s_2s_3s_1s_2)^{-1}$.
 \end{center}
 Hence, $s_1=\grf_2(\gamma\beta^2\alpha^{-1})$, $s_2=\grf_2(\alpha)$ and $s_3=\grf_2(\beta^{-1}\gamma^{-1})$. We observe that this time describing a generator as the image of an element in the ER presentation is more complicated than in the previous cases. For this reason, we don't have a systematic way of choosing the words $\tilde w_x$ for $x \in \overline{G_{13}}$. The general idea is the same as before. Let us say, for example, that we try  to find a right parabolic basis with respect to $s_1$. Since
 $\grC(A_{21}A_{21})=\grC(A_{21}A_{32}A_{32}A_{31})=\bar{\gamma}\bar{\beta}^2\bar{\alpha}^{-1}$, 
 we try to find  words $\tilde w_x$ that represent $x$ in $\overline{G_{13}}$ so that
\begin{itemize}
 \item the set $P_1:=\{x\in\overline{G_{13}}\,|\,\tilde w_x \text{ does not end in } (y_2y_1)^2 \}$ has $12$ elements;\smallbreak
 \item the set $P_2:=\{x\in\overline{G_{13}}\,|\,\tilde w_x=\tilde w_{x'}(y_2y_1)^2 \text{ for some } x' \in P_1\}$ has $12$ elements.
  \end{itemize}
Thanks to the GAP3 program, we were able to try several different choices of words for some elements, until we finally found that the following sets produce good parabolic bases:

$$\begin{array}{lcl}\mathfrak{B}_{s_1}^r(G_{13}) &=&
\left\{
\begin{matrix}
\begin{array}{c|l}
z^k, \;  z^ks_2,\;z^ks_3,\;z^ks_2s_3,\;z^ks_1s_2,\;z^ks_3s_2,\;z^ks_1s_3,\\
z^ks_2s_1s_3,\;z^ks_1s_3s_2,\;z^ks_2s_3s_2,\; z^ks_2s_1s_2,\,z^ks_2s_1s_3s_2
\end{array}
\end{matrix}\,\,\,k=0,1,2,3 \right\},\smallbreak\smallbreak\smallbreak
\\

\mathfrak{B}_{s_3}^r(G_{13}) &=&
\left\{
\begin{matrix}
\begin{array}{c|l}
z^k, \;  z^ks_1,\;z^ks_2,\;z^ks_1s_2,\;z^ks_3s_1,\;z^ks_2s_1,\;z^ks_3s_2,\\
z^ks_2s_1s_2,\;z^ks_1s_2s_1,\; z^ks_3s_2s_1,\;z^ks_3s_1s_2,\;z^ks_3s_1s_2s_1
\end{array}
\end{matrix}\,\,\,k=0,1,2,3 \right\},
\smallbreak\smallbreak\smallbreak\\

\mathfrak{B}_{s_1}^l(G_{13}) &=&
\left\{
\begin{matrix}
\begin{array}{c|l}
z^k, \;  z^ks_3,\;z^ks_2,\;z^ks_2s_1,\;z^ks_3s_1,\;z^ks_2s_3,\;z^ks_3s_2,\\
z^ks_2s_3s_2,\;z^ks_2s_1s_2,\;
z^ks_2s_3s_1,\;
z^ks_3s_2s_1,\;z^ks_2s_3s_1s_2
\end{array}
\end{matrix}\,\,\,k=0,1,2,3 \right\},
\smallbreak\smallbreak\smallbreak\\

\mathfrak{B}_{s_2}^l(G_{13}) &=&
\left\{
\begin{matrix}
\begin{array}{c|l}
z^k, \;  z^ks_3,\;z^ks_1,\;z^ks_1s_2,\;z^ks_3s_1,\;z^ks_3s_2,\;z^ks_1s_3,\;\\
z^ks_1s_2s_1,\;z^ks_1s_2s_3,\; z^ks_3s_1s_2,\;
z^ks_1s_3s_2,\;z^ks_3s_2s_1 \phantom{aa}
\end{array}
\end{matrix}\,\,\,k=0,1,2,3 \right\},
\smallbreak\smallbreak\smallbreak\\

\mathfrak{B}_{s_3}^l(G_{13}) &=&
\left\{
\begin{matrix}
\begin{array}{c|l}
z^k, \;  z^ks_1,\;z^ks_2,\;z^ks_2s_1,\;z^ks_1s_2,\;z^ks_1s_3,\; z^ks_2s_3,\\
\;z^ks_2s_1s_2,\;z^ks_2s_1s_3,\;
z^ks_1s_2s_3,\;
z^ks_1s_2s_1,\;z^ks_2s_3s_1 \phantom{a,}
\end{array}
\end{matrix}\,\,\,k=0,1,2,3 \right\}.
\end{array}$$

\begin{rem}\rm
For both groups of the octahedral family that we studied here, $G_8$ and $G_{13}$, it is true that two generators of the generic Hecke algebra are conjugate, so we could have used the conjugacy relation to obtain a parabolic basis with respect to one generator from the one with respect to its conjugate (this is also true for $G_4$ that we studied much earlier). However, the bases that we obtain like this are much more complicated than the ones we presented here.
\end{rem}

\begin{rem}\rm
If in the future we obtain parabolic bases for $\mathcal{H}(G_{11})$, we will be able to use them in order to construct parabolic bases for the generic Hecke algebras of the groups of the octahedral family, in the same way that we used the ones of $\mathcal{H}(G_{7})$ in order to construct parabolic bases for $\mathcal{H}(G_5)$ and $\mathcal{H}(G_6)$.
\end{rem}

\subsubsection{The other exceptional groups}
Let $W$ be an exceptional complex reflection group of rank $2$ and let $\mathcal{H}(W)$ be the generic Hecke algebra associated with $W$. It is possible that a basis for $\mathcal{H}(W)$ that was constructed for proving the BMR freeness conjecture is a parabolic basis with respect to one generator -- we already saw such examples in the cases of $G_4$, $G_7$, $G_8$ and $G_{13}$. Moreover, in order for the parabolic freeness conjecture to hold for $W$, it is enough to 
\begin{itemize}
\item[(1)] prove the ``one-sided'' parabolic freeness conjecture (Conjecture \ref{conjpp1}), and \smallbreak
\item[(2)] find a parabolic basis with respect to only one generator per conjugacy class in $W$ (Proposition \ref{conjtogen}).
\end{itemize}
For example, one parabolic basis is all that is needed to prove the validity of the parabolic freeness conjecture if all generators of $W$ belong to the same conjugacy class. Among the groups that we studied, this is the case for $G_4$ and $G_8$.
So we took a look at the bases that exist in literature, and thanks to the above arguments, we also have the validity of the parabolic freeness conjecture for the following groups:
\begin{itemize}
\item $G_{12}$ (the bases given in \cite{Ch17} and in \cite{MaPf} are parabolic);\smallbreak
\item $G_{14}$ (the basis given in \cite{Ch17} is parabolic with respect to both generators);\smallbreak
\item $G_{16}$ (the  basis given in \cite{Ch18} is parabolic);\smallbreak
\item $G_{22}$ (the  basis given in  \cite{MaPf}  is parabolic).
\end{itemize}

Moreover, looking again at the bases given in \cite{Ch17}, we observe that Conjecture \ref{conjpp1} holds
for the following pairs $(W,W_I)$ (that is, $\mathcal{H}(W)$ is free either as a left or as a right $\mathcal{H}_I(W)$-module of finite rank):
		\begin{center}
$(G_9, \mathbb{Z}/4\mathbb{Z})$, $(G_{10},\mathbb{Z}/4\mathbb{Z})$, $(G_{11},\mathbb{Z}/3\mathbb{Z})$, $(G_{13}, \mathbb{Z}/2\mathbb{Z})$, $(G_{15}, \mathbb{Z}/2\mathbb{Z})$, $(G_{15}, \mathbb{Z}/3\mathbb{Z})$.
\end{center}

Finally, for the state of the art of the parabolic freeness conjecture to be complete we give a list of pairs   $(W,W_I)$ for which Conjecture \ref{conjpp1} holds, where $W$ is an exceptional group of rank greater than $2$:
	\begin{itemize} 
	\item $(G_{24}, B_2)$ by \cite{MaPf};\smallbreak
	\item $(G_{25},G_4)$, $(G_{25},\mathbb{Z}/3\mathbb{Z}\times \mathbb{Z}/3\mathbb{Z})$ by \cite{Mar41};\smallbreak
	\item $(G_{26},{G_4})$ by \cite{Mar43};\smallbreak
	\item $(G_{27},B_2)$, $(G_{29},B_3)$, $(G_{31}, A_3)$ by \cite{MaPf};\smallbreak
	\item $(G_{32},G_{25})$ by \cite{Mar41};\smallbreak
	\item $(G_{33}, A_4)$, $(G_{33},D_4)$, $(G_{34},G_{33})$ by \cite{MaPf}.
\end{itemize}

\end{document}